\newtheorem{theorem}{Theorem}[section]
\newtheorem{lemma}[theorem]{Lemma}
\newtheorem{proposition}[theorem]{Proposition}
\newtheorem{corollary}[theorem]{Corollary}
\theoremstyle{definition}
\newtheorem{remark}[theorem]{Remark}
\newtheorem{example}[theorem]{Example}
\newtheorem{definition}[theorem]{Definition}
\newtheorem*{open}{Open problem}
\newtheorem*{ack}{Acknowledgments}
\numberwithin{equation}{section}
\author[Bozzola]{Francesco Bozzola}
\address[F.\ Bozzola]{DIME Dipartimento di ingegneria meccanica, energetica, gestionale e dei trasporti 
	\newline\indent
	Universit\`a di Genova
	\newline\indent
	via alla Opera Pia 15, 16145 Genova, Italy}
\email{francesco.bozzola@edu.unige.it}
\author[Talluri]{Matteo Talluri}
\address[M.\ Talluri]{Dipartimento di Matematica
	\newline\indent
	Alma Mater Studiorum  Universit\`a di Bologna
	\newline\indent
	piazza di Porta San Donato, 5
	40126 Bologna, Italy}
\email{matteo.talluri@unibo.it}
\date{\today}
\keywords{fractional Sobolev spaces, fractional Poincar\'e-Sobolev inequalities, fractional capacity.}
\subjclass[2010]{35P15, 35P30, 31C45}
\title[Maz'ya-type bounds]{Maz'ya-type bounds for sharp constants in fractional Poincar\'e-Sobolev inequalities}
\begin{document}
	
	\begin{abstract}
		We prove estimates for the sharp constants in fractional Poincar\'e-Sobolev inequalities associated to an open set, in terms of a nonlocal capacitary extension of its inradius. This work builds upon previous results obtained in the local case by Maz'ya and Shubin and by the first author and Brasco. We rely on a new Maz'ya-Poincar\'e inequality and, incidentally, we also prove new fractional Poincar\'e-Wirtinger-type estimates. These inequalities display sharp limiting behaviours with respect to the fractional order of differentiability. As a byproduct, we obtain a new criterion for the embedding of the homogeneous Sobolev space $\mathcal{D}^{s,p}_0(\Omega)$ in $L^q(\Omega)$, valid in the subcritical regime and for $p \le q < p^*_s$. Our results are new even for the first eigenvalue of the fractional Laplacian and contain an optimal characterization for the positivity of the fractional Cheeger's constant.
	\end{abstract}
	
	\maketitle
	
	\begin{center}
		\begin{minipage}{10cm}
			\small
			\tableofcontents
		\end{minipage}
	\end{center}
	
	\section{Introduction}
	
	\subsection{Overview and goal of the paper}
	Let $\Omega \subseteq \mathbb{R}^N$ be an open set, its {\it fractional principal frequencies} are given by
	\begin{equation} \label{defi:freq}
		\lambda^{s}_{p, q}(\Omega) := \displaystyle \inf_{u \in C^{\infty}_0(\Omega)} \left\{\iint_{\mathbb{R}^N \times \mathbb{R}^N} \frac{|u(x) - u(y)|^p}{|x-y|^{N+s\,p}}\,dxdy \,:\, \|u\|_{L^q(\Omega)} = 1 \right\}, \qquad s\in(0,1),
	\end{equation}
	where $1 \le p < \infty$ and $q \ge 1$ satisfies 
	\begin{equation} \label{hp:subcritical}
		\begin{cases}
			q \le p^*_s, \quad &\mbox{ if } s\,p \in (0, N) \cup (N, \infty),\\
			\\
			q < p^*_s, \quad &\mbox{ if } s\,p=N, \\
		\end{cases}
	\end{equation}
	Here we adopt the usual notation for the critical fractional Sobolev's exponent 
	\[
	p^*_s := \begin{cases}
		\begin{aligned}
			\dfrac{N\,p}{N-s\,p}, \quad &\mbox{ if } s\,p < N,\\
			\\
			\infty, \quad &\mbox{ if } s\,p \ge N.
		\end{aligned}
	\end{cases}
	\]
	For $s=1$, the previous notation corresponds to the classical critical Sobolev's exponent, which will be denoted by the symbol $p^*$. 
	The quantity which is minimized in \eqref{defi:freq} corresponds to the $p-$power of the {\it Gagliardo-Slobodecki\u{\i} seminorm}, in the sequel denoted by the symbol $[\,\cdot\,]_{W^{s,p}(\mathbb{R}^N)}$. For $p = q$, we will use the distinguished notation $\lambda_{p}^s(\Omega) := \lambda_{p, p}^s(\Omega)$.     
	\par
	By definition, the variational quantity $\lambda_{p, q}^s(\Omega)$ coincides with the {\it sharp constant in the fractional Poincar\'e-Sobolev inequality}
	\begin{equation} \label{intro:poin-sob}
		c_\Omega\,\int_\Omega |u|^q\,dx \le \iint_{\mathbb{R}^N \times \mathbb{R}^N} \frac{|u(x) - u(y)|^p}{|x-y|^{N+s\,p}}\,dxdy, \quad \mbox{ for } u \in C^\infty_0(\Omega),
	\end{equation}
	associated to an open set $\Omega \subseteq \mathbb{R}^N$. Noteworthy, we have the following equivalence 
	\[
	\lambda^s_{p, q}(\Omega) > 0 \quad \Longleftrightarrow \quad \mathcal{D}^{s,p}_0(\Omega) \hookrightarrow L^q(\Omega),
	\]
	and in this case $\mathcal{D}^{s,p}_0(\Omega)= \widetilde{W}^{s,p}_0(\Omega)$ (see for instance \cite[Corollary 1.2, Theorem 1.3 \& Lemma 2.3]{franzina-torsion}). Here $\mathcal{D}^{s,p}_0(\Omega)$ stands for the {\it homogeneous fractional Sobolev space} with nonlocal Dirichlet boundary conditions, suitably introduced in Section \ref{sec:2}.
	
	\begin{remark} \label{rmk:relax}
		By standard approximation arguments, see for instance \cite[Lemma 11]{FSV}, for every $\Omega \subseteq \mathbb{R}^N$ open set  
		\[
		\lambda_{p,q}^s(\Omega) = \inf_{u \in X} \left\{\iint_{\mathbb{R}^N \times \mathbb{R}^N} \frac{|u(x)-u(y)|^p}{|x-y|^{N+s\,p}}dxdy : \|u\|_{L^q(\Omega)} = 1\right\},
		\]
		where $X$ can be indifferently chosen among the spaces $C^\infty_0(\Omega),\,{\rm Lip}_0(\Omega)$ or $\widetilde{W}^{s,p}_0(\Omega) \cap L^q(\Omega)$. This equivalence will be used throughout whole the paper without mention. We refer to Section \ref{sec:2} for the precise definitions of these spaces. 
	\end{remark} 
	In analogy with the local case, for $1 < p < \infty$ if there exists a nonnegative solution $u \in \mathcal{D}^{s,p}_0(\Omega)$ attaining the infimum \eqref{defi:freq}, then it satisfies in the weak sense the following nonlocal {\it Lane-Emden-type equation}
	\[
	\begin{cases}
		\begin{aligned}
			(-\Delta_p)^s u &= \lambda\,u^{q-1}, &&\quad \mbox{ in } \Omega,\\
			\\
			u&=0, &&\quad \mbox{ in } \mathbb{R}^N \setminus \Omega,
		\end{aligned}
	\end{cases}
	\quad \mbox{ with } \lambda = \lambda^s_{p, q}(\Omega),
	\]
	where the nonlocal operator $(-\Delta_p)^s$ stands for the {\it fractional $p-$Laplacian}
	\[
	(-\Delta_p)^s u(x) := 2\,\lim_{\varepsilon \to 0} \int_{\mathbb{R}^N \setminus B_\varepsilon(x)}\,\frac{|u(x) - u(y)|^{p-2}\,(u(x) - u(y))}{|x-y|^{N+s\,p}}\,dy, \qquad x \in \mathbb{R}^N. 
	\]
	
	In the last decade, there has been growing interest in studying properties of the sharp fractional Poincar\'e-Sobolev constants of an open set, in connection with Spectral Geometry, regularity and properties of extremals, Hardy and Faber-Krahn-type inequalities of nonlocal nature, and several other issues. Without any effort to be exhaustive, we only mention the following papers \cite{AbFelNor, BiaBra22, BiaBra24, BC, BCV, BLP, BraMosSqu, BP, BPS, franzina-torsion, FraPal, LindLind}, which provide themselves, along with the references therein contained, a solid mathematical background on the themes analyzed in this manuscript. 
	\par 
	The same due attention has to be paid to the literature available for the ``local" counterpart of these topics. As one may expect, the corresponding literature is even more vast. Here, as major sources of inspiration, we mention the fundamental Maz'ya's book \cite{Maz}, the recent book by Brasco \cite{Brasco_book} and the following articles  \cite{BozBra, BozBra2, BB_variation,  BraPini, BraTol, BraBriPri, BraDePFran, BraFran, BraLin, BraMaz, BraPriZag1, BraPriZag2, BraRuf}. Along with the references therein contained, these works may be of some help to the interested reader to imagine (with a little of fantasy) a bridge between the local and nonlocal counterpart of the subject we will treat. 
	\vskip.2cm \noindent
	Before entering into the heart of the matter, as a straightforward consequence of the definition we record the following relations 
	\begin{equation} \label{scaling-frequency}
		\lambda_{p, q}^s(B_r(x_0)) = \frac{\lambda^s_{p, q}(B_1)}{r^{s\,p-N+N\,\frac{p}{q}}}, \quad \mbox{ and }  \quad \lambda_{p, q}^s(\Omega_1) \le \lambda_{p, q}^s(\Omega_2), \quad \mbox{ if } \Omega_2 \subseteq \Omega_1. 
	\end{equation}
	These entails the following sharp upper bound on the frequencies
	\begin{equation} \label{monotonicity}
		\lambda_{p, q}^s(\Omega) \le \frac{\lambda^s_{p, q}(B_1)}{r_\Omega^{s\,p-N+N\,\frac{p}{q}}}, 
	\end{equation}
	where $r_\Omega$ denotes the {\it inradius of $\Omega$}, namely
	\begin{equation} \label{inradius}
		r_\Omega = \sup\Big\{r > 0 : \exists x_0 \in \Omega \mbox{ such that } B_r(x_0) \subseteq \Omega  \Big\}.
	\end{equation}
	The upper bound \eqref{monotonicity} can be interpreted by saying that the Poincar\'e-Sobolev inequality \eqref{intro:poin-sob} {\it ceases to be true} whenever $\Omega$ contains balls of radius arbitrarily large. The following particular cases deserve a special mention.
	\begin{itemize}
		\item If $q = p^*_s$, where $s\,p < N$, the power appearing in the denominator of the rightmost term of \eqref{monotonicity}  vanishes. The quantity $\lambda^s_{p,p^{*}_s}(\Omega)$ is {\it independent} from the open set $\Omega$, that is 
		\begin{equation} \label{costante-sobolev}
			\lambda^s_{p,p^{*}_s}(\Omega) = \lambda^s_{p, p^*_s}(\mathbb{R}^N), \quad \mbox{ for every } \Omega \subseteq \mathbb{R}^N \mbox{ open set}, 
		\end{equation}
		and it corresponds to the best constant in the {\it fractional Sobolev's inequality} 
		\[
		C\left(\int_{\Omega} |u|^{p^*_s}\,dx\right)^{\frac{p}{p^*_s}} \le \iint_{\mathbb{R}^N \times \mathbb{R}^N} \frac{|u(x) - u(y)|^p}{|x-y|^{N+sp}}\,dxdy, \qquad \mbox{ for } u \in C^\infty_0(\Omega).
		\]
		This observation can be made rigorous by arguing as in \cite[Chapter I, Section 4.5]{Struwe}. For later convenience, we introduce a distinguished notation for the inverse of $\lambda^s_{p, p^*_s}(\mathbb{R}^N)$
		\[
		\mathcal{S}_{N,p,s} := \sup_{u \in C^\infty_{0}(\Omega)} \left\{\left(\int_{\mathbb{R}^N} |u|^{\frac{N\,p}{N-s\,p}}\,dx\right)^{\frac{N-s\,p}{N}} \,:\, \iint_{\mathbb{R}^N \times \mathbb{R}^N} \frac{|u(x) - u(y)|^p}{|x-y|^{N + s\,p}}\,dxdy = 1\right\}.
		\] 
		More details on this constant can be found in \cite{BraMosSqu, CotTav, MazShap} and in the references therein contained. 
		\vskip.2cm
		\item If $p=q = 1$, by \cite[Theorem 5.7]{BLP}, the Poincar\'e constant $\lambda^s_{1}(\Omega)$ coincides with the {\it fractional Cheeger's constant} of $\Omega$ \begin{equation}\label{cheeger-constant}
			h_s(\Omega) :=\inf \left\{\frac{P_s( E)}{|E|}: E \Subset \Omega \text { open set with smooth boundary }\right\},
		\end{equation}
		first considered\footnote{Actually, the authors in \cite[formula (5.1)]{BLP} propose a slightly different definition of {\it fractional Cheeger's constant}. In particular, their quantity is always smaller than \eqref{cheeger-constant}, and the two notions coincide for open and bounded Lipschitz sets, see \cite[Theorem 5.8]{BLP} and Lemma \ref{Cheeger--type constant} below.} in \cite{BLP}. Here $P_s$ stands for the \emph{fractional $s-$perimeter}, which can be defined in terms of the Gagliardo-Slobodecki\u{\i} seminorm as \[
		P_s(A) := [1_{A}]_{W^{s,1}(\mathbb{R}^N)} = \iint_{\mathbb{R}^N \times \mathbb{R}^N} \frac{|1_A(x) - 1_A(y)|}{|x-y|^{N+s}}\,dxdy, \quad \mbox{ for every Borel } A \subseteq \mathbb{R}^N.
		\]
		Also for the $s-$perimeter, we do not claim to give an exhaustive list of references and we mention only the papers \cite{AmbDePMar, BLP, FFMMM, Lombardini}. 
		\vskip.2cm
		\item If $p = q = 2$, the operator $(- \Delta_p)^s$ coincides with the well-known {\it fractional Laplacian}, up to a normalization constant $c = c(N,s)$, and $\lambda^s_{2}(\Omega)$ corresponds to the {\it bottom of the spectrum of the fractional Laplacian} with nonlocal Dirichlet boundary conditions  on $\Omega$. If the infimum \eqref{defi:freq} is attained, then $\lambda_{2}^s(\Omega)$ is also called the {\it first (Dirichlet) eigenvalue} of the fractional Laplacian. We refer to Chapter 3 of the monograph \cite{BisRadServadei_book} and to the references therein contained for a detailed account on the eigenvalue problem for the fractional Laplacian. 
	\end{itemize}
	\vskip.2cm \noindent	
	The previous preamble was in order to settle the problem we aim to attack. Here, we are interested in characterizing the positivity of the sharp fractional Poincar\'e-Sobolev constants $\lambda_{p, q}^s$,  among the class of {\it all} the open subsets of $\mathbb{R}^N$. 
	\par 
	A first prototype result one could imagine might be to reverse the estimate \eqref{monotonicity}. Unfortunately, this cannot be achieved unless by adding quite restrictive assumptions on the open set $\Omega$ and/or restrictions on the parameter $s$, for instance see \cite[Theorem 1.3]{BiaBra22} for a counterexample and \cite[Introduction]{BiaBra24}. Positive results in this direction can be found in \cite[Corollary 2]{BanLatMH} and \cite[Corollary 5.1]{BC}. 
	\vskip.2cm \noindent
	For a general open set $\Omega$, the ultimate reason which prevents to infer the positivity of $\lambda_{p, q}^s(\Omega)$ in terms of the finiteness of its inradius, is that, differently from $r_\Omega$, the frequencies $\lambda_{p, q}^s(\Omega)$ are not affected by {\it capacitary perturbations} of $\Omega$ by compact sets $\Sigma \subseteq B_{R}(x_0)$ of null {\it homogeneous relative $(s,p)-$capacity}
	\begin{equation} \label{defi:rel-cap}
		\widetilde{{\rm cap}}_{s,p}\left(\Sigma; B_R(x_0)\right) = \inf_{\varphi \in C^\infty_0(B_R(x_0))}\left\{\iint_{\mathbb{R}^N \times \mathbb{R}^N} \frac{|\varphi(x) - \varphi(y)|^p}{|x-y|^{N+s\,p}}\,dxdy \,:\, \varphi \ge 1 \mbox{ on } \Sigma\right\},
	\end{equation}
	see Proposition \ref{prop:cap-null} (see also \cite[Remark 1.4]{BiaBra22}). 
	Typically, we will call $\widetilde{{\rm cap}}_{s,p}$ just {\it fractional capacity} or {\it (relative) $(s,p)-$capacity} and refer to Section \ref{subsec:cap} for some fundamentals on this notion. 
	\vskip.2cm \noindent
	The same obstruction occurs in the local setting. To encompass this difficulty, in \cite{MS} Maz'ya and Shubin introduced the {\it interior capacitary radius} and proved a two-sided estimate for $\lambda_{p}$ in terms of this quantity valid {\it for every} open set, in the case $p=2$. Here, we denote with
	\[
	\lambda_{p}(\Omega) = \inf_{C^\infty_0(\Omega)}\left\{\int_\Omega |\nabla u|^p\,dx : \|u\|_{L^p(\Omega)} = 1\right\}, 
	\]
	the sharp constant in the Poincar\'e's inequality 
	\[
	c_\Omega\,\int_\Omega |u|^p\,dx \le \int_\Omega |\nabla u|^p\,dx, \quad \mbox{ for } u \in C^\infty_0(\Omega).
	\]
	We also recall that, if $1 \le p < \infty$ and $E \subseteq \mathbb{R}^N$ is an open set, the {\it relative $p-$capacity} of a compact set $\Sigma \subseteq E$ is
	\begin{equation} \label{defi:rel-cap-local}
		{\rm cap}_p\left(\Sigma; E\right) = \inf_{\varphi \in C^\infty_0(E)}\left\{\int_E |\nabla \varphi|^p\,dx : \varphi \ge 1 \mbox{ on } \Sigma\right\}.
	\end{equation}
	Observe that the relative $(s,p)-$capacity \eqref{defi:rel-cap} is the natural nonlocal counterpart of \eqref{defi:rel-cap-local}.  
	\par 
	The interior capacitary radius is a {\it capacitary variant} of the inradius, in the sense that the competitors balls $B_r(x_0)$ in \eqref{inradius} are allowed to cross $\partial \Omega$ for a small portion, uniformly controlled in $2-$capacity by means of a precise power of their radius, $r^{N-2}$, and a parameter $\gamma$. We refer to \cite{BozBra2, BB_variation} for a wider discussion. 
	\par 
	In \cite{BozBra2} the authors sharpened and extended the results contained in \cite{MS}. More precisely, in \cite[Main Theorem]{BozBra2}, the first author and Brasco obtained the following equivalence\footnote{Actually, in \cite[Theorem 6.1]{BozBra2} a more general result  is stated, valid for Poincar\'e-Sobolev constants $\lambda_{p,q}$ with $p \le q < p^*$. } 
	\begin{equation} \label{bozbra2}
		\sigma_{N,p}\,\gamma\, \left(\frac{1}{R_{p,\gamma}(\Omega)}\right)^p \le \lambda_p(\Omega) \le C_{N,p,\gamma}\,\left(\frac{1}{R_{p,\gamma}(\Omega)}\right)^p, \quad \mbox{ for } \gamma \in (0, 1),
	\end{equation} 
	for every $1 \le p \le N$ and for every $\Omega \subseteq \mathbb{R}^N$ open set. The quantity $R_{p,\gamma}$ stands for {\it $(p, \gamma)-$capacitary inradius} given by 
	\begin{equation} \label{defi:capin-local}
		R_{p, \gamma}(\Omega) = \sup\Big\{r > 0 : \exists B_r(x_0) \mbox{ such that } \overline{B_r(x_0)}\setminus \Omega \mbox{ is } (p, \gamma)-\mbox{negligible} \Big\},
	\end{equation}
	where, by saying {\it $(p, \gamma)-$negligible}, we mean that 
	\[
	{\rm cap}_p\left(\overline{B_r(x_0)}\setminus \Omega; B_{2\,r}(x_0)\right) \le \gamma\,{\rm cap}_p\left(\overline{B_r(x_0)}; B_{2\,r}(x_0)\right).
	\]
	We kindly refer the reader always to \cite{BozBra2, BB_variation} for an in-depth account on the {\it capacitary inradius} and possible variations of it. Here we confine ourselves to saying that $R_{p, \gamma}$ is a variant both of Maz'ya and Shubin's interior capacitary inradius considered in \cite{MS} and of Maz'ya's {\it $(p, 1)-$inner (cubic) diameter} (see \cite[Definition 14.2.2]{Maz}).
	\par 
	The relevant feature of the $(p, \gamma)-$capacitary inradius and of the interior capacitary radius, or the $(p, 1)-$inner diameter, is that their finiteness {\it completely characterizes} the positivity of $\lambda_{p}$, and more in general of $\lambda_{p, q}$ with $q \in [p, p^*)$, among the class of all the open subsets of $\mathbb{R}^N$. This was first noticed by Maz'ya in the $70's$, by using the notion of $(p, 1)-$inner diameter, see \cite[Theorem 15.4.1]{Maz} and the historical comment in \cite[pag. 692]{Maz}. 
	\par 
	For this reason, in the present paper we introduce the terminology {\it Maz'ya-type bounds} to refer to two-sided estimates of the sharp Poincar\'e-Sobolev constants in terms of capacitary-based objects, like the capacitary inradius/inner diameter and their fractional variants. 
	\vskip.2cm \noindent 
	In this manuscript, we obtain Maz'ya-type bounds for the sharp constants in the {\it fractional} Poincar\'e-Sobolev inequalities $\lambda_{p, q}^s$. More precisely, our main goal is to formulate and transpose a two sided-estimate like the one provided by \eqref{bozbra2} into the nonlocal framework. In analogy with \eqref{defi:capin-local}, we are led to introduce the following 
	\begin{definition}\label{def:inradius}
		Let $1 \le p < \infty$ and $0<s<1$. For every $0<\gamma<1$, we say that a compact set $\Sigma \subseteq \overline{B_r(x_0)}$ is $(s, p, \gamma)-$\emph{negligible} if
		\begin{equation*}
			\widetilde{{\rm cap}}_{s,p}\left(\Sigma; B_{2\,r}(x_0)\right) \le \gamma\,\widetilde{{\rm cap}}_{s,p}\left(\overline{B_r(x_0)}; B_{2\,r}(x_0)\right).
		\end{equation*}
		Accordingly, for every $\Omega \subseteq \mathbb{R}^N$ open set its {\it $(p,\gamma)-$capacitary inradius of order $s$} is given by 
		\begin{equation}
			R^s_{p,\gamma}(\Omega) := \sup \Big\{ r > 0 :\, \exists B_r(x_0) \mbox{ such that } \overline{B_r(x_0)} \setminus \Omega \mbox{ is } (s,p,\gamma)-\mbox{negligible} \Big\}.
		\end{equation}
		Typically, we will refer to $R^s_{p, \gamma}$ just as {\it fractional capacitary inradius}.  
	\end{definition}
	
	\begin{remark}
		By definition
		\[
		r_\Omega \le R^s_{p, \gamma}(\Omega), \mbox{ for every } 0 < \gamma < 1, \quad \mbox{ and} \quad \gamma \mapsto R^s_{p, \gamma}(\Omega) \mbox{ is monotone non-decreasing.}
		\]
		For a comparison between $R_{p,\gamma}$ and $R^s_{p, \gamma}$, we refer to Proposition \ref{prop:capin-vs-capin}. 
	\end{remark}
	\subsection{Main Theorems} The following theorems are the main achievements of the paper. They can be interpreted as nonlocal variants of \cite[Main Theorem \& Theorem 6.1]{BozBra2}. 
	\par 
	The lower bound on $\lambda_{p, q}^s$ reads as follows 
	
	\begin{theorem}[Lower bound]
		\label{thm:lower-bound}
		Let  $1 \le p < \infty$ and $0<s<1$ be such that $s\,p \le N$ and let $p \le q < p^*_s$.
		Let $0 < \gamma < 1$ and let $\Omega \subseteq \mathbb{R}^N$ be an open set. There exists a constant $\sigma = \sigma\left(N, p, s, q\right) > 0$ such that  
		\begin{equation} \label{main:lower-bound}
			\gamma\,\sigma\,\left(\frac{1}{R^s_{p, \gamma}(\Omega)}\right)^{s\,p-N+N\,\frac{p}{q}} \le \lambda_{p, q}^s(\Omega).
		\end{equation}
		Moreover, for $1 \le p < \infty$ we have 
		\[
		\sigma(N, p, s, q) \sim \frac{1}{s} \quad \mbox{ as } s \searrow 0, 
		\]
		and for $1 \le p \le N$ we have 
		\[
		\sigma(N, p, s, q) \sim \frac{1}{1-s} \quad \mbox{ as } s \nearrow 1. 
		\]
	\end{theorem}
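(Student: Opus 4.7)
\smallskip
\noindent\emph{Proof plan.} The strategy mirrors the local argument of \cite{BozBra2}: localize on balls whose radius slightly exceeds $R^s_{p,\gamma}(\Omega)$, apply a capacitary Poincar\'e--Sobolev inequality on each such ball, and finally sum via a bounded overlap covering.

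First, I would record what should be the technical heart of the paper, the \emph{Maz'ya--Poincar\'e inequality} on a ball promised in the abstract. For $u\in \widetilde{W}^{s,p}_0(B_{2r}(x_0))$ vanishing on a compact set $\Sigma\subseteq \overline{B_r(x_0)}$, one has
\[
\int_{B_r(x_0)} |u|^p\,dx \le \frac{C_1\,r^{s\,p}\,\widetilde{{\rm cap}}_{s,p}(\overline{B_r(x_0)};B_{2r}(x_0))}{\widetilde{{\rm cap}}_{s,p}(\Sigma;B_{2r}(x_0))} \iint_{B_{2r}(x_0)\times B_{2r}(x_0)} \frac{|u(x)-u(y)|^p}{|x-y|^{N+s\,p}}\,dx\,dy.
\]
Upgrading to $L^q$ with $p\le q<p^*_s$ would be the next step: combining the previous inequality with the fractional Sobolev embedding on $B_{2r}(x_0)$ and a scaling/interpolation argument in $r$, one obtains
\[
\left(\int_{B_r(x_0)} |u|^q\,dx\right)^{\!p/q} \le \frac{C_2\,r^{s\,p-N+N\,\frac{p}{q}}}{\widetilde{{\rm cap}}_{s,p}(\Sigma;B_{2r}(x_0))/\widetilde{{\rm cap}}_{s,p}(\overline{B_r(x_0)};B_{2r}(x_0))} \iint_{B_{2r}(x_0)\times B_{2r}(x_0)} \frac{|u(x)-u(y)|^p}{|x-y|^{N+s\,p}}\,dx\,dy.
\]
Track the dependence of $C_1, C_2$ on $s$ throughout, since the asymptotics $\sigma\sim 1/s$ as $s\searrow 0$ and $\sigma\sim 1/(1-s)$ as $s\nearrow 1$ hinge on exactly these constants.

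Now fix $r>R^s_{p,\gamma}(\Omega)$ and $u\in C^\infty_0(\Omega)$. By Definition \ref{def:inradius}, at \emph{every} point $x_0\in\mathbb{R}^N$ the compact set $\Sigma_{x_0}:=\overline{B_r(x_0)}\setminus\Omega$ fails to be $(s,p,\gamma)$-negligible, hence
\[
\widetilde{{\rm cap}}_{s,p}(\Sigma_{x_0};B_{2r}(x_0)) > \gamma\,\widetilde{{\rm cap}}_{s,p}(\overline{B_r(x_0)};B_{2r}(x_0)).
\]
Since $u\equiv 0$ on $\mathbb{R}^N\setminus\Omega$ and hence on $\Sigma_{x_0}$, the previous display plus the capacitary Poincar\'e--Sobolev inequality give, for every $x_0$,
\[
\left(\int_{B_r(x_0)} |u|^q\,dx\right)^{\!p/q} \le \frac{C_2}{\gamma}\,r^{s\,p-N+N\,\frac{p}{q}} \iint_{B_{2r}(x_0)\times B_{2r}(x_0)} \frac{|u(x)-u(y)|^p}{|x-y|^{N+s\,p}}\,dx\,dy.
\]

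Next I would choose a Besicovitch-type covering $\{B_r(x_i)\}_{i\in\mathbb{N}}$ of $\mathbb{R}^N$ such that also the doubled family $\{B_{2r}(x_i)\}_{i\in\mathbb{N}}$ has overlap bounded by a dimensional constant $\theta_N$. Since $q/p\ge 1$, using the elementary bound $\sum_i a_i \le \bigl(\sum_i a_i^{q/p}\bigr)^{p/q}\cdot(\#)$ in the direction $(\sum a_i)^{q/p}\ge \sum a_i^{q/p}$ for nonnegative terms, we sum the localized estimate and obtain
\[
\int_{\Omega} |u|^q\,dx \le \sum_i \int_{B_r(x_i)} |u|^q\,dx \le \left(\frac{C_2}{\gamma}\right)^{\!q/p} r^{q(s\,p-N)+N\,p\cdot\frac{q}{p\cdot 1}\cdot\frac{1}{1}}\,\theta_N\,[u]^{q}_{W^{s,p}(\mathbb{R}^N)},
\]
after exploiting the bounded overlap of the doubles to absorb the sum of local Gagliardo integrals into one global seminorm. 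Taking $r\searrow R^s_{p,\gamma}(\Omega)$, extracting $p/q$ powers and rearranging yields
\[
\gamma\,\sigma\,\left(\frac{1}{R^s_{p,\gamma}(\Omega)}\right)^{s\,p-N+N\,\frac{p}{q}} \le \lambda^s_{p,q}(\Omega),
\]
with $\sigma=\sigma(N,p,s,q)$ inheriting its asymptotics from $C_2$.

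The main obstacle is the nonlocal nature of the Gagliardo seminorm: a naive decomposition of the double integral over localized pieces loses the long-range interactions. This forces the Maz'ya--Poincar\'e inequality of Step~1 to be phrased in terms of the double integral over $B_{2r}\times B_{2r}$ only (rather than $\mathbb{R}^N\times\mathbb{R}^N$), so that bounded overlap of the \emph{doubles} is enough to collapse the sum into one global seminorm. Tracking the sharp dependence of the constants on $s$ near $0$ and $1$ is the second delicate point, and would require careful bookkeeping of the constants arising in the capacitary Poincar\'e inequality and in the fractional Sobolev embedding on a ball.
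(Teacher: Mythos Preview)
Your overall scheme---localize on balls of radius $r>R^s_{p,\gamma}(\Omega)$, apply a capacitary Maz'ya--Poincar\'e inequality on each ball, sum via bounded overlap---is exactly the paper's strategy. However, the specific localization you propose creates a genuine gap for the asymptotic claims.

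You phrase the Maz'ya--Poincar\'e inequality with the \emph{purely local} double integral over $B_{2r}(x_0)\times B_{2r}(x_0)$ on the right-hand side. First, a minor mismatch: you state the inequality for $u\in\widetilde W^{s,p}_0(B_{2r}(x_0))$, but in the application $u\in C^\infty_0(\Omega)$ is not compactly supported in $B_{2r}(x_0)$; the inequality must hold for arbitrary $u\in C^\infty_0(\mathbb{R}^N)$ vanishing on $\Sigma$. More seriously, the paper explains in its Introduction precisely why this local choice \emph{cannot} deliver the sharp behaviour $\sigma\sim 1/s$ as $s\searrow 0$: the estimate $[u]_{W^{s,p}(\mathbb{R}^N)}\le C\,[u]_{W^{s,p}(B_{2r})}$ fails uniformly in $s$, so any Maz'ya--Poincar\'e inequality controlled by $[u]_{W^{s,p}(B_{2r})}$ inherits an extra decaying factor as $s\searrow 0$. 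Concretely, the route through the local seminorm yields $\sigma\sim 1$ rather than $\sigma\sim 1/s$, so the lower bound becomes asymptotically trivial.

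The paper's fix is to replace $B_{2r}\times B_{2r}$ by the \emph{asymmetric} strip $B_R(x_0)\times\mathbb{R}^N$ in the Maz'ya--Poincar\'e--Sobolev inequality (their Lemma~\ref{lm:mazya-poin2}). This seminorm still localizes in the first variable---so a lattice covering with bounded overlap of the $B_R(x_i)$'s suffices to sum the pieces into $[u]^p_{W^{s,p}(\mathbb{R}^N)}$---but retains enough long-range interaction that it interpolates correctly between $\|u\|_{L^p}$ (as $s\searrow 0$) and $\|\nabla u\|_{L^p}$ (as $s\nearrow 1$); see Remark~\ref{rmk:seminorma-asimmetrica}. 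With this modification your covering argument goes through, and the constant $\sigma$ acquires the claimed asymptotics from $\widetilde{\operatorname{cap}}_{s,p}(\overline{B_1};B_2)\sim 1/s$ and $\sim 1/(1-s)$.
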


	As for the upper bound on $\lambda_{p, q}^s$, we have the following

	\begin{theorem}[Upper bound]
		\label{thm:upper-bound}
		Let $1 \le p < \infty$ and $0 < s < 1$ be such that $s\,p\leq N$ and let $p \le q < p^*_s$. Let $\Omega \subseteq \mathbb{R}^N$ be an open set. There exists $0 < \gamma_0 = \gamma_0(N,p,s) \le 1$ such that for every $0 < \gamma < \gamma_0$, we have   \begin{equation}\label{main:upper-bound p-q}
			\lambda_{p,q}^s(\Omega)\leq \mathcal{C}\,\left(\frac{1}{R^s_{p, \gamma}(\Omega)}\right)^{s\,p-N+N\,\frac{p}{q}},
		\end{equation}
		for some constant $\mathcal{C} = \mathcal{C}(N,p,s, q, \gamma) > 0$; if $p=1$ we can take $\gamma_0 =1.$ In particular, if $R^s_{p, \gamma}(\Omega) = +\infty$ then $\lambda^s_{p,q}(\Omega) = 0$.  Moreover, if $1 \le p < \infty$ we have 
		\[
		\mathcal{C}\left(N,p,s, q, \gamma\right) \sim \frac{1}{s} \quad\mbox{ as } s \searrow 0,
		\]
	for $0 < \gamma < \liminf_{s \to 0} \gamma_0\left(N,p,s\right)$, and if $1 \le p \le N$ we have 
		\[
		\mathcal{C}\left(N,p,s, q, \gamma\right) \sim \frac{1}{1- s} \quad\mbox{ as } s \nearrow 1,
		\]
		for $ 0 < \gamma < \liminf_{s \to 1} \gamma_0\left(N,p,s\right)$.
	\end{theorem}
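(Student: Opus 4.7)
The plan is to follow the classical Maz'ya recipe: given a ball $B_r(x_0)$ for which $\overline{B_r(x_0)} \setminus \Omega$ is $(s,p,\gamma)$-negligible and $r$ is slightly smaller than $R^s_{p,\gamma}(\Omega)$, I would construct an explicit admissible test function and plug it into the variational characterization of $\lambda^s_{p,q}(\Omega)$ recalled in Remark \ref{rmk:relax}. Up to rescaling via \eqref{scaling-frequency}, I may assume $r = 1$. Writing $\Sigma := \overline{B_1(x_0)} \setminus \Omega$, I would fix a near-minimizer $\varphi \in C^\infty_0(B_2(x_0))$ for $\widetilde{{\rm cap}}_{s,p}(\Sigma; B_2(x_0))$, so that $\varphi \geq 1$ on $\Sigma$ and $[\varphi]^p_{W^{s,p}(\mathbb{R}^N)} \leq 2\,\gamma\,\widetilde{{\rm cap}}_{s,p}(\overline{B_1(x_0)}; B_2(x_0))$, together with a fixed cut-off $\eta \in C^\infty_0(B_1(x_0))$ with $0 \leq \eta \leq 1$ and $\eta \equiv 1$ on $B_{1/2}(x_0)$. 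The test function would then be
\[
u := (\eta - \varphi)_+,
\]
which is supported in $B_2(x_0)$ and vanishes on $\Sigma$ because $\eta \leq 1 \leq \varphi$ there. Since $u$ also vanishes outside $B_1(x_0)$ and $\Sigma$ is the only piece of $\Omega^c$ inside $B_1(x_0)$, we obtain $u \in \widetilde{W}^{s,p}_0(\Omega) \cap L^q(\Omega)$.

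The energy bound is the easy half: a standard convexity inequality and the non-expansiveness of the positive part in the Gagliardo seminorm give $[u]^p_{W^{s,p}(\mathbb{R}^N)} \lesssim [\eta]^p_{W^{s,p}(\mathbb{R}^N)} + [\varphi]^p_{W^{s,p}(\mathbb{R}^N)}$, and upon unscaling this produces the sought $r^{N-sp}$ factor; the capacity of $\overline{B_1(x_0)}$ in $B_2(x_0)$ is comparable to a universal constant, and the cut-off $\eta$ contributes a controlled amount. The delicate half is the $L^q$ lower bound: on $B_{1/2}(x_0)$ one has $u = (1-\varphi)_+$, so the task reduces to showing that $\varphi < 1/2$ on a subset of $B_{1/2}(x_0)$ of measure $\gtrsim 1$. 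Here the smallness of $\gamma$ enters through a Markov-type estimate combined with a fractional Sobolev inequality applied to $\varphi$ on $B_2(x_0)$: the measure of $\{\varphi \geq 1/2\} \cap B_{1/2}(x_0)$ is bounded by a constant times $\gamma$, so choosing $\gamma_0 = \gamma_0(N,p,s)$ small enough forces $\|u\|_{L^q(\Omega)}^p \gtrsim 1$. For $p = 1$ the analogous argument can be run with a competitor constructed from $\mathbf{1}_{B_r(x_0)} - \varphi$ regularized via convolution, exploiting the coarea-type structure of the $s$-perimeter; this permits $\gamma_0 = 1$.

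Combining the two bounds yields $\lambda^s_{p,q}(\Omega) \leq [u]^p / \|u\|_{L^q}^p \lesssim r^{-(sp - N + Np/q)}$, and letting $r \nearrow R^s_{p,\gamma}(\Omega)$ delivers \eqref{main:upper-bound p-q}. For the asymptotics as $s \to 0$ and $s \to 1$, I would keep explicit track of the $s$-dependence of the constants: the Gagliardo seminorm of a smooth cut-off $\eta$ has the universal behaviour $[\eta]^p_{W^{s,p}} \asymp 1/(s(1-s))$ (Maz'ya-Shaposhnikova and Bourgain-Brezis-Mironescu asymptotics), and the relative capacity of $\overline{B_1}$ in $B_2$ has matching behaviour, so their ratio and hence $\mathcal{C}$ picks up precisely the prefactors $1/s$ and $1/(1-s)$. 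The main obstacle I expect is the control of the measure of $\{\varphi \geq 1/2\}$ uniformly in $s$: extracting a threshold $\gamma_0$ whose liminf as $s \to 0$ or $s \to 1$ is strictly positive requires the fractional Sobolev embedding used inside to carry $s$-sharp constants, which forces a careful application of the Maz'ya-Shaposhnikova and Bourgain-Brezis-Mironescu type embeddings on the reference ball $B_2(x_0)$.
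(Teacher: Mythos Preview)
Your strategy matches the paper's almost exactly: a cut-off combined with a near-optimal capacitary competitor produces the test function, the seminorm is bounded above by $[\eta]^p+[\varphi]^p$ up to constants, and the $L^q$ lower bound follows from the smallness of $\gamma$ via a Sobolev-type control on $\varphi$. Two points deserve attention. First, the assertion $u\in\widetilde W^{s,p}_0(\Omega)$ is not automatic from pointwise vanishing on $\mathbb R^N\setminus\Omega$; the paper proves a dedicated approximation lemma (Lemma~\ref{lm:brezis-type}) showing that any $C^{0,s}$ function vanishing outside a bounded open set lies in its $\widetilde W^{s,p}_0$, and this is precisely what closes the gap you gloss over. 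Second, in place of your Markov level-set argument, the paper estimates the \emph{mean} $\fint_{B_{(1-2\varepsilon)r}}\varphi_\delta\,dx$ directly via purpose-built $L^1$--$W^{s,p}$ Poincar\'e inequalities on balls (Lemmas~\ref{poincarè-Palle} and~\ref{lm:bound-l-infinito}), with the conformal case $sp=N$ treated separately; for $p=1$ this route yields the \emph{sharp} constant $P_s(B_r)/|B_r|$, and it is this sharpness---not merely a coarea regularization---that produces the full range $\gamma_0=1$.
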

	
	For a discussion on the negligibility threshold $\gamma_0$ we refer to Remark \ref{rmk:asymptotic-s-upper bound}. 
	
	\begin{remark}
		We stress that the conclusions of the Main Theorems are a novelty even for the bottom of the spectrum of the fractional Laplacian with nonlocal Dirichlet boundary conditions, corresponding to the case $p = q =2$.  
	\end{remark}
	
	As a byproduct of Main Theorems, we get the following equivalence.
	\begin{corollary} \label{cor:embedding-ext}
		Let $1 \le p < \infty$ and $0 < s < 1$ be such that $s\,p \leq N$ and let $p \le q < p^*_s$. For every $\Omega \subseteq \mathbb{R}^N$ open set, we have 
		\[
		\mathcal{D}^{s,p}_0(\Omega) \hookrightarrow L^q(\Omega) \quad \Longleftrightarrow \quad R^s_{p, \gamma}(\Omega) < \infty,
		\]  
		for some $0 < \gamma < \gamma_0(N,p,s)$, where $\gamma_0$ is the same of Theorem \ref{thm:upper-bound}. 
	\end{corollary}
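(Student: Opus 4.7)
The plan is to derive Corollary \ref{cor:embedding-ext} directly from Theorem \ref{thm:lower-bound} and Theorem \ref{thm:upper-bound}, using the standard equivalence between the continuous embedding $\mathcal{D}^{s,p}_0(\Omega) \hookrightarrow L^q(\Omega)$ and the positivity $\lambda^s_{p,q}(\Omega) > 0$, already recalled in the introduction (quoting \cite{franzina-torsion}). Thus the task reduces to showing that, under the hypotheses $s\,p \le N$ and $p \le q < p^*_s$,
\[
\lambda^s_{p,q}(\Omega) > 0 \quad \Longleftrightarrow \quad R^s_{p,\gamma}(\Omega) < \infty \ \text{ for some } 0 < \gamma < \gamma_0(N,p,s).
\]

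First I would verify the sign of the exponent appearing in both Main Theorems, namely
\[
\alpha := s\,p - N + N\,\tfrac{p}{q}.
\]
If $s\,p < N$ and $q < p^*_s = Np/(N-sp)$, then $Np/q > N - sp$, so $\alpha > 0$. If $s\,p = N$, then $\alpha = Np/q > 0$ since $q < \infty$. Hence in both regimes the map $r \mapsto r^{-\alpha}$ is strictly decreasing and vanishes as $r \to \infty$.

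For the implication ``$\Leftarrow$'' I would simply invoke the lower bound \eqref{main:lower-bound}: fix $\gamma \in (0, \gamma_0)$ for which $R^s_{p,\gamma}(\Omega) < \infty$; then
\[
\lambda^s_{p,q}(\Omega) \;\ge\; \gamma\,\sigma\,\bigl(R^s_{p,\gamma}(\Omega)\bigr)^{-\alpha} \;>\; 0,
\]
since $\sigma > 0$ and $\alpha > 0$. Note that Theorem \ref{thm:lower-bound} is valid for every $\gamma \in (0,1)$, so no compatibility issue with $\gamma_0$ arises here.

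For the implication ``$\Rightarrow$'' I would argue by contraposition. Suppose $R^s_{p,\gamma}(\Omega) = +\infty$ for every $\gamma \in (0, \gamma_0)$. Pick any such $\gamma$; Theorem \ref{thm:upper-bound} yields
\[
\lambda^s_{p,q}(\Omega) \;\le\; \mathcal{C}\,\bigl(R^s_{p,\gamma}(\Omega)\bigr)^{-\alpha} \;=\; 0,
\]
again because $\alpha > 0$. Hence $\lambda^s_{p,q}(\Omega) = 0$ and the embedding fails. (Alternatively one may invoke the monotonicity of $\gamma \mapsto R^s_{p,\gamma}(\Omega)$ stated after Definition \ref{def:inradius}: if $R^s_{p,\gamma}(\Omega) < \infty$ for some $\gamma \in (0,1)$, then the same holds for every smaller value of the parameter, so one can always choose a $\gamma$ below $\gamma_0$ to apply the upper bound.) There is no genuine obstacle in this corollary: the only care required is the verification that $\alpha > 0$ in the subcritical regime and the quantifier bookkeeping on $\gamma$ versus $\gamma_0$, both of which are immediate once Theorems \ref{thm:lower-bound} and \ref{thm:upper-bound} are in hand.
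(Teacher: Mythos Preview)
Your proposal is correct and follows exactly the route the paper intends: the corollary is stated there as an immediate byproduct of Theorems \ref{thm:lower-bound} and \ref{thm:upper-bound} together with the equivalence $\lambda^s_{p,q}(\Omega)>0 \Leftrightarrow \mathcal{D}^{s,p}_0(\Omega)\hookrightarrow L^q(\Omega)$ recalled in the introduction, and no separate proof is written out. Your verification that the exponent $s\,p-N+N\,p/q$ is strictly positive in the subcritical regime and your handling of the quantifier on $\gamma$ make the argument complete.
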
 
	
	Already contained in the Main Theorems, we have the following {\it optimal} characterization for the positivity of the fractional Cheeger's constant of an open set, which we emphasize with a separate statement. The optimality has to be intended in the sense that the full spectrum of the admissible values of $\gamma$ is allowed. 
	\begin{corollary}
		Let $0 < s <1$ and $0 < \gamma < 1$. For every $\Omega \subseteq \mathbb{R}^N$ open set, we have 
		\[
		\sigma\,\gamma\,\left(\frac{1}{R^s_{1, \gamma}(\Omega)}\right)^s \le h_s(\Omega) \le \mathcal{C}\,\left(\frac{1}{R^s_{1, \gamma}(\Omega)}\right)^s,  
		\] 
		where the constants $\sigma = \sigma\left(N,s\right)$ and $\mathcal{C} = \mathcal{C}\left(N,s,\gamma\right)$ are the same of the Main Theorems, with $\mathcal{C}\left(N,s,\gamma\right)$ which diverges to $+\infty$ as $\gamma \to 1$ and   
		\[
		0 < \lim_{s \to 0} s\,\mathcal{C}\left(N,s,\gamma\right) < \infty, \qquad 0 < \lim_{s \to 1} (1-s)\,\mathcal{C}\left(N,s,\gamma\right) < \infty.
		\]
		In particular, we have 
		\[
		h_s(\Omega) > 0 \quad \Longleftrightarrow \quad R^s_{1, \gamma}(\Omega) < \infty,
		\]
		and the last condition does not depend on $0 < \gamma < 1$. 
	\end{corollary}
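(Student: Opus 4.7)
The plan is to deduce the corollary directly from the Main Theorems applied with $p=q=1$. The key identification, already recalled in the introduction via \cite[Theorem 5.7]{BLP}, is $\lambda_1^s(\Omega)=h_s(\Omega)$. With $p=q=1$ the scaling exponent $s\,p-N+N\,p/q$ collapses to $s$, producing the desired form of the estimates.

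First I would apply Theorem \ref{thm:lower-bound} with $p=q=1$: the hypothesis $s\,p\le N$ holds automatically since $N\ge 1$, and the resulting constant $\sigma(N,1,s,1)$ is what we denote $\sigma(N,s)$. For the upper bound I would apply Theorem \ref{thm:upper-bound} with $p=q=1$; crucially, that statement explicitly permits $\gamma_0=1$ when $p=1$, so the bound remains valid on the full admissible range $\gamma\in(0,1)$, which is exactly the claimed optimality. The endpoint asymptotics $\sigma(N,s)\sim 1/s$ and $\mathcal{C}(N,s,\gamma)\sim 1/s$ as $s\searrow 0$, and $\sim 1/(1-s)$ as $s\nearrow 1$, are nothing but the asymptotics stated in the Main Theorems, which apply since $1\le p=1\le N$ so that both endpoint regimes are accessible.

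The equivalence $h_s(\Omega)>0\Longleftrightarrow R^s_{1,\gamma}(\Omega)<\infty$ is then immediate from combining the two inequalities; its $\gamma$-independence follows since the criterion is valid for every $\gamma\in(0,1)$, together with the monotonicity of $\gamma\mapsto R^s_{1,\gamma}(\Omega)$ recorded earlier. The step I expect to be the main obstacle is the claim $\mathcal{C}(N,s,\gamma)\to+\infty$ as $\gamma\to 1$, which cannot be read off the bare statement of Theorem \ref{thm:upper-bound}: I would establish it either by tracking the explicit form of $\mathcal{C}$ produced in the proof of the upper bound, or by exhibiting an unbounded open set (for instance a half-space perforated by a suitable array of small-capacity holes) on which $R^s_{1,\gamma}(\Omega)\to\infty$ as $\gamma\to 1$ while $h_s(\Omega)$ stays uniformly bounded below, which forces $\mathcal{C}$ to blow up.
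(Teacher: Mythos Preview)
Your proposal is correct and matches the paper's approach: the corollary is stated as an immediate specialization of the Main Theorems to $p=q=1$, using that for $p=1$ one may take $\gamma_0=1$; the identification $\lambda_1^s(\Omega)=h_s(\Omega)$ for general open sets is provided by Proposition~\ref{Cheeger--type constant}. The divergence of $\mathcal{C}(N,s,\gamma)$ as $\gamma\to 1$ and the existence of the limits in $s$ are obtained exactly as you suggest, by tracking the explicit constant from the proof of the upper bound (see Remark~\ref{rmk:asym-C-upper-bound}, where in fact $(1-\gamma)\,\mathcal{C}(N,s,\gamma)$ has a positive finite limit as $\gamma\to 1$).
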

	
	As a byproduct of Theorem \ref{thm:lower-bound}, in analogy with \cite[Corollary 2]{BozBra2}, we can also infer an upper bound on $\|w_{p, s, \Omega}\|_{L^\infty(\Omega)}$ in terms of $R^s_{p, \gamma}(\Omega)$, where $w_{p, s, \Omega}$ stands for the so-called {\it $(s,p)-$torsion function on $\Omega$}.  Loosely speaking, it can be seen as the unique solution of the following problem 
	\[
	\begin{cases}
		\begin{aligned}
			(-\Delta_p)^s u = 1, &\quad \mbox{ in } \Omega,\\
			\\
			u = 0, &\quad \mbox{ in } \mathbb{R}^N \setminus \Omega,
		\end{aligned}
	\end{cases}
	\]
	we refer to \cite[Section 3]{franzina-torsion} for the precise definition.	The importance to get $L^\infty$ bounds on $w_{p,s, \Omega}$ is encoded in \cite[Theorem 1.1]{franzina-torsion}. We have the following
	
	\begin{corollary}
		Let $1 < p < \infty$ and $0 < s < 1$. let $\Omega \subseteq \mathbb{R}^N$ be an open set, then we have 
		\[
		\|w_{p, s, \Omega}\|_{L^\infty(\Omega)} \le \left(\frac{{\bf C}_{N,p,s}}{\gamma\,\sigma}\right)^\frac{1}{p-1}\,\Big(R^s_{p,\gamma}(\Omega) \Big)^{s\frac{p}{p-1}}, \quad \mbox{ for every } 0 < \gamma < 1,
		\]
		where $\sigma$ is the same constant in Theorem \ref{thm:lower-bound} and ${\bf C}_{N,p,s}$ is the same constant of \cite[formula (5.7)]{franzina-torsion}.  
	\end{corollary}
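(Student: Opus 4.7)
The plan is to chain two ingredients: first, the a priori $L^\infty$ bound on the $(s,p)$-torsion function recalled from \cite[formula (5.7)]{franzina-torsion}, which expresses $\|w_{p,s,\Omega}\|_{L^\infty(\Omega)}^{p-1}$ as a multiple of $1/\lambda^s_p(\Omega)$; and second, the main lower bound on Poincar\'e constants provided by Theorem \ref{thm:lower-bound}, specialized to the endpoint $q=p$.

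More precisely, I would begin by recording the estimate from \cite[formula (5.7)]{franzina-torsion}, which has the form
\[
\|w_{p,s,\Omega}\|_{L^\infty(\Omega)}^{p-1} \le \frac{{\bf C}_{N,p,s}}{\lambda^s_p(\Omega)},
\]
with the constant ${\bf C}_{N,p,s}$ being exactly the one appearing in the statement to be proved. This type of inequality is standard for torsion-like problems and reflects the maximum principle together with a Moser-type iteration argument, but here I would simply quote it as a known input.

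Next, I would apply Theorem \ref{thm:lower-bound} with the choice $q=p$. Plugging $q=p$ into the exponent $s\,p - N + N\,p/q$ produces exactly $s\,p$, and the theorem therefore yields
\[
\lambda_p^s(\Omega) \ge \gamma\,\sigma\,\left(\frac{1}{R^s_{p,\gamma}(\Omega)}\right)^{s\,p},
\]
for every $0<\gamma<1$, with $\sigma=\sigma(N,p,s,p)$ being the same constant of Theorem \ref{thm:lower-bound}. Inverting this inequality and combining it with the $L^\infty$ bound above gives
\[
\|w_{p,s,\Omega}\|_{L^\infty(\Omega)}^{p-1} \le \frac{{\bf C}_{N,p,s}}{\gamma\,\sigma}\,\Big(R^s_{p,\gamma}(\Omega)\Big)^{s\,p},
\]
and taking the $(p-1)$-th root delivers the claimed estimate.

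There is no real obstacle in this argument: the corollary is genuinely a byproduct, and the only point deserving care is to match the constants in \cite[formula (5.7)]{franzina-torsion} with the statement, and to check that the assumption $s\,p\le N$ tacitly inherited from Theorem \ref{thm:lower-bound} is compatible with the regime in which the torsion $L^\infty$ estimate is used (in the supercritical case $s\,p>N$, the direct Morrey embedding of $\mathcal{D}^{s,p}_0(\Omega)$ into $L^\infty$ would produce the same kind of bound through an analogous, even shorter route, so the statement remains consistent in either regime).
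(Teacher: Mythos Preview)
Your proposal is correct and matches exactly the approach the paper has in mind: the corollary is stated without proof as a direct byproduct of Theorem~\ref{thm:lower-bound} combined with \cite[formula~(5.7)]{franzina-torsion}, and your two-step chaining of the torsion $L^\infty$ bound with the lower bound on $\lambda_p^s(\Omega)$ at $q=p$ is precisely that. Your remark on the hypothesis $s\,p\le N$ inherited from Theorem~\ref{thm:lower-bound} versus the unrestricted range in the corollary's statement is a pertinent observation the paper leaves implicit.
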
	
	
	
	\subsection{Strategy of the proofs} The guidelines we followed for the proofs of our Main Theorems are inspired by the proofs of \cite[Main Theorem \& Theorem 6.1]{BozBra2}. We refer to \cite[Introduction]{BozBra2} for an exhaustive discussion. Roughly speaking, we tried to obtain the nonlocal counterparts of the crucial results used to prove \cite[Main Theorem \& Theorem 6.1]{BozBra2} and tried to reproduce the argument. 
	\par 
	Nevertheless, technical difficulties arose in adapting the local technique to the nonlocal framework. This led us to modify parts of the proofs in a nontrivial way and gave rise to a number of results, which weren't available in the literature and which are of independent interest, as well. Below, we comment separately the proofs of Theorem \ref{thm:lower-bound} and Theorem \ref{thm:upper-bound}. 
	\vskip.2cm \noindent
	{\it Comments on the proof of Theorem \ref{thm:lower-bound}.}  \begin{itemize}
		\item The first step is the choice of a {\it tiling} of the whole $\mathbb{R}^N$ made of balls having equidistant centers, the same radius $r$ and an explicit control on the multiplicity of the covering. The choice of the radius $r$ is not by chance and made in such a way to exploit the {\it definition of $R^s_{p,\gamma}(\Omega)$}, Definition \ref{def:inradius}. More precisely, by taking $r > R^s_{p, \gamma}(\Omega)$ we exploit a uniform estimate from below on the fractional capacity of $\overline{B_r(x_0)} \setminus \Omega$ in terms of $\gamma$ and $r$
		\begin{equation} \label{eqn:gamma-fat}
			\widetilde{{\rm cap}}_{s,p}\left(\overline{B_r(x_0)}\setminus \Omega; B_{2\,r}(x_0)\right) \ge \gamma\,r^{N-s\,p}\,\widetilde{{\rm cap}}_{s,p}\left(\overline{B_1};B_2\right), \quad \mbox{ for every } x_0 \in \mathbb{R}^N. 
		\end{equation}
		\vskip.2cm \noindent
		\item The crucial step is now to use a fractional version of the {\it Maz'ya-Poincar\'e inequality}, see Lemma \ref{lm:mazya-poin2}, on each ball of the tiling introduced in the previous point and then adding all the contributions. For local formulations of this inequality, see for instance \cite[Theorem 14.1.2]{Maz}. One could be tempted to use fractional variants of the Maz'ya-Poincar\'e inequality already existing in the literature: for example \cite[Proposition 4.3]{BiaBra24} and its natural extension to the case $p \neq 2$. Unfortunately, this would not lead to sharp limiting behaviours with respect to the fractional parameter of differentiability $s$ in \eqref{main:lower-bound}, more precisely for the regime $s \searrow 0$. Let us explain a little more clearly this issue, assume for simplicity $q = p = 2$. By keeping in mind Example \ref{ex:finiteness-inr} below, we can take an open set $\Omega$ such that 
		\begin{equation}\label{finiteness-inradius}
			\limsup_{s \to 0} R^{s}_{2,\gamma}(\Omega) < r_{N,\gamma}, \quad \mbox{ for } 0 < \gamma < \gamma_0(N),
		\end{equation}  
		for some $r_{N, \gamma} > 0$, and denote by $\mathscr{B}$ the family of balls introduced in the previous step. Then by using \cite[Proposition 4.3]{BiaBra24} and \eqref{eqn:gamma-fat} on each $B \in \mathscr{B}$ we would obtain\footnote{Here, the symbol ``$\gtrsim$" indicates that we have the inequality ``$\ge$" up to a universal constant depending only on $N$ and $p$.} 
		\[
		\begin{split}
			[u]^2_{W^{s,2}(\mathbb{R}^N)} &\gtrsim \sum_{B \in \mathscr{B}} [u]^2_{W^{s,2}(B)} \\&\gtrsim \frac{\gamma}{r^{2\,s}}\,s\,\widetilde{{\rm cap}}_{s,2}\left(\overline{B_1}; B_2\right)\,\sum_{B \in \mathscr{B}} \|u\|^2_{L^2(B)} = \frac{\gamma}{r^{2\,s}}\,s\,\widetilde{{\rm cap}}_{s,2}\left(\overline{B_1};B_2\right)\,\|u\|^2_{L^2(\mathbb{R}^N)},
		\end{split}
		\]
		for every $u \in C^\infty_0(\Omega)$. This would entail the following estimate
		\[
		\lambda_{2}^s(\Omega) \gtrsim \gamma\,s\,\widetilde{{\rm cap}}_{s,2}\left(\overline{B_1};B_2\right)\,\left(\frac{1}{R^s_{2,\gamma}(\Omega)}\right)^{2\,s},
		\]
		which becomes trivial as $s \searrow 0$, in light of \eqref{finiteness-inradius}, \eqref{eqn:asym-sharp1} and Proposition \ref{prop:asym2}. The loss of sharpness as $s \searrow 0$ in the previous argument is eventually imputable to the fact that {\it for every} $1 \le p < \infty$ and $0 < s < 1$, the inequality 
		\[
		\frac{1}{C}\,[u]_{W^{s,p}(\mathbb{R}^N)} \le [u]_{W^{s,p}(B_r(x_0))}, \quad \mbox{ for every } u \in C^\infty_0(B_r(x_0)),
		\]      
		fails, see for instance \cite[Lemma 2.3]{BS}. For this reason, by taking inspiration from \cite[Proposition 3.1]{BiaBra22}, we introduce an {\it asymmetric seminorm} on ``strips" 
		\begin{equation}\label{asym-semi1}
			u \mapsto \left(\iint_{B_r(x_0) \times \mathbb{R}^N} \frac{|u(x) - u(y)|^{p}}{|x-y|^{N+s\,p}}\,dxdy\right)^{\frac{1}{p}}, \quad \mbox{ for } u \in C^\infty_0(\mathbb{R}^N),
		\end{equation}
		which well-behaves in the limits $s \searrow 0$ and $s \nearrow 1$ (see Remark \ref{rmk:seminorma-asimmetrica} below). A new fractional Maz'ya-Poincar\'e inequality for the seminorm \eqref{asym-semi1}, encoding the {\it sharp} asymptotic behaviours in $s$, is then established in Lemma \ref{lm:mazya-poin2}. By using this inequality on each ball of the tiling introduced in the previous point, we eventually get the lower bound \eqref{main:lower-bound}, equipped with the correct asymptotic behaviours with respect to $s$.   
	\end{itemize}
	\begin{remark} [Limiting cases] \label{rmk:seminorma-asimmetrica}
		Let $x_0 \in \mathbb{R}^N$ and $r > 0$. It is immediate to verify that the function 
		\begin{equation}\label{asym-semi}
			u \mapsto \left(\iint_{B_r(x_0) \times \mathbb{R}^N} \frac{|u(x) - u(y)|^{p}}{|x-y|^{N+s\,p}}\,dxdy\right)^{\frac{1}{p}}, \quad \mbox{ for } u \in C^\infty_0(\mathbb{R}^N),
		\end{equation}
		is a seminorm (actually it is a norm on $C^\infty_0(\mathbb{R}^N)$). Noteworthy, it inherits  from $[\,\cdot\,]_{W^{s,p}(\mathbb{R}^N)}$ its {\it interpolative nature}. In other words, for every $u \in C^\infty_0(\mathbb{R}^N)$ we have 
		\[
		\iint_{B_r(x_0) \times \mathbb{R}^N} \frac{|u(x) - u(y)|^{p}}{|x-y|^{N+s\,p}}\,dxdy \sim \frac{1}{s}\,\int_{\mathbb{R}^N} |u|^p\,dx, \quad \mbox{ for } s \searrow 0,
		\]
		and 
		\[
		\iint_{B_r(x_0) \times \mathbb{R}^N} \frac{|u(x) - u(y)|^{p}}{|x-y|^{N+s\,p}}\,dxdy \sim \frac{1}{1-s}\,\int_{\mathbb{R}^N} |\nabla u|^p\,dx, \quad \mbox{ for } s \nearrow 1, 
		\]
		in accordance with the celebrated convergence theorems for the Gagliardo-Slobodecki\u{\i} seminorm, for $s \searrow 0$ and $s \nearrow 1$, respectively contained in \cite{MazShap} and \cite{BBM} (see also \cite{Bre_constant}).  
		Indeed, by \cite[Remark 3.4]{BS} and Proposition \ref{k-functional} below we can infer 
		\[
		\frac{1}{C}\,\int_{0}^{\infty}\left(\frac{K(t,u, L^p(\mathbb{R}^N), \mathcal{D}^{1,p}_0(\mathbb{R}^N))}{t^s}\right)^p \frac{dt}{t} \le \iint_{B_r(x_0) \times \mathbb{R}^N} \frac{|u(x) - u(y)|^{p}}{|x-y|^{N+s\,p}}\,dxdy \le [u]^p_{W^{s,p}(\mathbb{R}^N)},
		\]
		where $K$ indicates the {\it K-functional}, conveniently introduced in Section \ref{sec:4}.
		In light of \cite[Proposition 4.1]{BS}, this entails that 
		\[
		\frac{1}{C}\,[u]^p_{W^{s,p}(\mathbb{R}^N)} \le \iint_{B_r(x_0) \times \mathbb{R}^N} \frac{|u(x) - u(y)|^{p}}{|x-y|^{N+s\,p}}\,dxdy \le [u]^p_{W^{s,p}(\mathbb{R}^N)},
		\]
		which yields the desired limiting behaviours by \cite[Theorem 3]{MazShap} and \cite[Proposition 2.8]{BPS}.
	\end{remark}
	
	\vskip.2cm \noindent
	{\it Comments on the proof of Theorem \ref{thm:upper-bound}.}
	\begin{itemize}
		\item As in the proof of \cite[Main Theorem]{BozBra2}, the first step is an {\it approximation argument} for the capacitary potential of the set $\overline{B_r(x_0)} \setminus \Omega$ by means of smooth functions $(\varphi_\delta)_{\delta > 0}$, where $B_r(x_0)$ is any $(s,p,\gamma)-$negligible ball in the sense of Definition \ref{def:inradius}. By using a suitable {\it cut-off function}, we then modify each approximating functions into a feasible competitor for $\lambda_{p, q}^s(\Omega)$. In order to verify the last condition, we use an expedient technical result, Lemma \ref{lm:brezis-type}, which is of independent interest.
		\vskip.2cm \noindent
		\item The second step is to obtain explicit constants (and possibly the sharp ones) in a {\it $L^1-W^{s,p}$ Poincar\'e inequality on balls}. In Lemma \ref{poincarè-Palle}, by means of a geometric argument, we are able to compute the explicit form of the sharp constant for $p=1$. The case $p \neq 1$ is more delicate: as in \cite{BozBra2}, we are only able to estimate the sharp constant of the corresponding $L^1-W^{s,p}$ Poincar\'e inequality. Nevertheless, we pay due attention to the constants appearing in the bound. This would be important to assure that the Poincar\'e inequality has the correct asymptotic behaviour as $s \searrow 0$ and $s \nearrow 1$, see Lemma \ref{lm:bound-l-infinito}. This difference between the cases $p=1$ and $p \neq 1$ entails an artificial restriction on the negligibility parameter $\gamma$, for $p \neq 1$. On the other hand, the result obtained for $p=1$ is optimal since we get the full range of the admissible values of the parameter $\gamma$, as in \cite{BozBra2}. 
	\end{itemize}
	
	\begin{open}
		In light of the previous discussion, we may expect that the negligibility threshold $\gamma_0$ of Theorem \ref{thm:upper-bound} is not optimal for $1 < p < \infty$. Prove or disprove that the conclusion of Theorem \ref{thm:upper-bound} holds by taking $\gamma_0 = 1$ even for $1 < p < \infty$. 
	\end{open}
	
	\subsection{Plan of the paper} In Section \ref{sec:2}, we settle the notation, the functional analytic framework and some basic results needed throughout the whole manuscript. In particular, we prove an expedient lemma for fractional Sobolev function, Lemma \ref{lm:brezis-type}, and some fundamentals on the notion of relative $(s,p)-$capacity. Among which, we emphasize a nonlocal variant of the relation between $1-$capacity and perimeter due to Maz'ya and Fleming, Proposition \ref{prop:cap-per}. In Section \ref{sec:3}, we derive some Poincar\'e-type estimates on balls, see Lemma \ref{poincarè-Palle} and Lemma \ref{lm:bound-l-infinito}, which will be crucially exploited in the proof of Theorem \ref{thm:upper-bound}. Section \ref{sec:4} is devoted to derive two new fractional Maz'ya-Poincar\'e-type inequalities involving an asymmetric seminorm, Lemma \ref{lm:mazya-poin} and Lemma \ref{lm:mazya-poin2}, which are the cornerstones of the proof of Theorem \ref{thm:lower-bound}. Incidentally, we also establish new fractional Poincar\'e-Wirtinger-type estimates, Lemma \ref{lm:poincare-wirtinger} and Lemma \ref{lm:poin-sob-wirtinger}, which are of independent interest. Noteworthy, all the inequalities previously mentioned display sharp dependence with respect to the fractional order of differentiability $s$. Section \ref{sec:5} contains the proofs of the Main Theorems, Theorem \ref{thm:lower-bound} and Theorem \ref{thm:upper-bound}. Eventually, in Appendix \ref{sec:app} we briefly discuss the qualitative asymptotic behaviours of the frequencies as $s \searrow 0$ and $s \nearrow 1$, and exhibit an expedient example.   
	
	\begin{ack}
		The authors wish to express their most sincere gratitude to Prof. Lorenzo Brasco for insightful discussions on the topic of this paper. F.\,B. and M.T. are members of the {\it Gruppo Nazionale per l'Analisi Matematica, la Probabilit\`a
			e le loro Applicazioni} (GNAMPA) of the Istituto Nazionale di Alta Matematica (INdAM) and partially supported by the ``INdAM - GNAMPA Project Ottimizzazione Spettrale, Geometrica e Funzionale", CUP E5324001950001. M.T. is partially supported by the PRIN project ``NO3–Nodal Optimization, NOnlinear elliptic equations, NOnlocal geometric problems, with a focus on regularity”, CUP J53D23003850006.
	\end{ack}
	\section{Preliminaries}
	\label{sec:2}
	\subsection{Notation and functional spaces}
	Unless otherwise specified, throughout the whole paper we will assume the dimension of our ambient space $\mathbb{R}^N$ to be $N \ge 1$. As usual, given a point $x_0 \in \mathbb{R}^N$ and a real number $R > 0$ the symbol $B_R(x_0)$ stands for the $N-$dimensional open ball, centered at $x_0$ and having radius $R > 0$, i.e.
	\[
	B_R(x_0) = \left\{x \in \mathbb{R}^N : |x-x_0| < R\right\}. 
	\] 
	Whenever an open ball with radius $R$ is centered at the origin, we will use the shortcut notation $B_R$. The volume of the $N-$dimensional ball with radius $1$ will be denoted by $\omega_N$. The $N-$dimensional open hypercube centered at $x_0$ and having radius $R$ is given by 
	\[
	Q_R(x_0) = \prod_{i=1}^{N} (x_0^i - R, x_0^i + R), \qquad \mbox{ where } x_0=(x_0^1, \ldots, x_0^N) \in \mathbb{R}^N.
	\] 
	The $N-$dimensional Lebesgue measure is denoted by $|\cdot|$ and often it will be addressed just as volume or measure. For $1 \le p \le \infty$ the Lebesgue spaces $L^p$, $L^p_{\rm loc}$ and the norms $\|\cdot\|_{L^p}$ are defined as usual in the literature. The conjugate exponent of $p$ will be denoted by $p' = p/(p-1)$. If $E \subseteq \mathbb{R}^N$ is a measurable set having finite and positive volume we set 
	\[
	{\rm av}(u; E) := \fint_{E} u\,dx = \frac{1}{|E|} \int_E u\,dx, \qquad \mbox{ for } u \in L^1_{\rm loc}(\mathbb{R}^N),
	\]
	for the $L^1-$mean of $u$ over $E$.  Let $E, \Omega$ be open subsets of $\mathbb{R}^N$, the symbol $E \Subset \Omega$ means that the closure $\overline{E}$ is a compact subset of $\Omega$. 
	\par 
	If $E\subseteq\mathbb{R}^N$ is a non-empty open set, the notation $C^\infty_0(E)$ stands for the space of infinitely differentiable functions whose support is a compact subset of $E$. For $1\le p \leq \infty$, we will denote by $W^{1,p}(E)$ 
	the standard Sobolev space
	\[
	W^{1,p}(E)=\Big\{u\in L^p(E)\, :\, \nabla u\in L^p(E;\mathbb{R}^N)\Big\},
	\]
	endowed with the norm\[
	\|u\|_{W^{1,p}(E)} = \|u\|_{L^p(E)} + \|\nabla u\|_{L^p(E)}, \qquad \mbox{ for } u \in W^{1,p}(E),
	\] 
	where we used the notation \[
	\|\Phi\|_{L^p(E)} = \left(\int_{E} |\Phi(x)|^p\,dx\right)^{\frac{1}{p}}, \qquad \mbox{ for } \Phi \in L^p(E; \mathbb{R}^N).
	\]
	For $1 \leq p < \infty$, the {\it homogeneous Sobolev space} denoted by the symbol $\mathcal{D}^{1,p}_0(E)$ is defined as the completion (in the sense of metric spaces) of $C^\infty_0(E)$ with respect to the norm \[
	\varphi \mapsto \|\nabla \varphi\|_{L^p(E)}.
	\]
	The space $W^{1,p}_0(E)$ is given by the closure of $C^{\infty}_0(E)$ in $W^{1,p}(E)$ with respect to $\|\cdot\|_{W^{1,p}(E)}$.
	\par 
	We now introduce basic definitions from the theory of fractional Sobolev spaces. The reader is referred to \cite{Demengel_book,  EE, Grisvard_book, Leoni_fractional, Stein_book, Triebel1} and to the references therein included for a systematic treatment of these spaces. Let $1 \le p < \infty$ and $0 < s < 1$ and let $E \subseteq \mathbb{R}^N$ be as before. The Gagliardo-Slobodecki\u{\i} seminorm is defined as
	\[
	[u]_{W^{s,p}(E)}:= \left(\iint_{E \times E} \frac{|u(x) - u(y)|^p}{|x-y|^{N+s\,p}}dx\,dy\right)^{\frac{1}{p}}, \qquad \mbox{ for } u \in L^1_{\rm loc}(E). 
	\]
	The space of functions given by 
	\[
	W^{s,p}(E) = \left\{u \in L^p(E) : [u]_{W^{s,p}(E)} < \infty \right\},
	\]
	endowed with the norm 
	\[
	\|u\|_{W^{s,p}(E)} := \|u\|_{L^p(E)} + [u]_{W^{s,p}(E)}
	\]
	is called {\it fractional Sobolev space}. The symbol $\widetilde{W}_0^{s,p}(E)$ stands for the {\it closure} of $C^\infty_0(E)$ in $W^{s,p}(\mathbb{R}^N)$ with respect to its norm. We recall that the {\it homogeneous fractional Sobolev spaces} $\mathcal{D}^{s,p}_0(\Omega)$ are Banach spaces defined as the completion of 
	\[
	\left(C^\infty_0(\Omega),\,\, [\,\cdot\,]_{W^{s,p}(\mathbb{R}^N)}\right),
	\]
	equipped with the natural norm produced by this procedure, that is 
	\[
	\|u\|_{\mathcal{D}^{s,p}_0(\Omega)} := [u]_{W^{s,p}(\mathbb{R}^N)}.
	\]
	We refer to \cite{BGV} for a comprehensive treatment of $\mathcal{D}^{s,p}_0$, the homogeneous Sobolev spaces of functions satisfying nonlocal Dirichlet conditions. See also \cite{BS} and \cite[Chapter 6]{Leoni_fractional} for further information, references and extensions. 
	\begin{remark} \label{rmk:riflessivita}
		For $1 < p < \infty$, since the spaces $\mathcal{D}^{s,p}_0(\Omega)$ are uniformly convex\footnote{The homogeneous Sobolev space $\mathcal{D}^{s,p}_0(\Omega)$ inherits the uniform convexity property from the uniform convexity of $L^p(\Omega \times \Omega)$, for instance see \cite[Chapter 6, Paragraphs 11 \& 12]{DiB}. Indeed, by definition, we have $$\|u\|_{\mathcal{D}^{s,p}_0(\Omega)} = \|\mathcal{J}(u)\|_{L^p(\Omega \times \Omega)}, \quad \mbox{ where } \mathcal{J}(u) := \frac{u(x) - u(y)}{|x-y|^{\frac{N}{p} +s}},$$ 
			for every $u \in \mathcal{D}^{s,p}_0(\Omega).$}
		Banach spaces, they are also reflexive, in light of Milman-Pettis Theorem (for instance see \cite[Theorem 3.31]{Bre}). 
	\end{remark}

	\begin{remark}
		In general, we only have that $\widetilde{W}^{s,p}_0(E) \subseteq \mathcal{D}^{s,p}_0(E)$ and the inclusion is possibly strict, see \cite[Remark 3.2]{BGV}. Nevertheless, if $E \subseteq \mathbb{R}^N$ is an open set supporting the $(s,p)-$Poincar\'e inequality, namely $\lambda_{p}^s(E) > 0$, then $\widetilde{W}^{s,p}_0(E) = \mathcal{D}^{s,p}_0(E)$. We also recall that in general we only have the inclusion
		\[
		\widetilde{W}_0^{s,p}(E) \subseteq \left\{u \in W^{s,p}(\mathbb{R}^N) : u = 0 \,\,\mbox{ a.e. on } \mathbb{R}^N \setminus E\right\} =: W^{s,p}_0(E),
		\]
		see for instance  \cite[Remark 7]{FSV}. However, if we assume that $E$ has {\it continuous boundary} the three functional spaces considered coincide, by the density of $C^\infty_0(E)$ in $W^{s,p}_0(E)$ as proved in \cite[Theorem 6]{FSV}, see also \cite[Theorem 1.4.2.2]{Grisvard_book} and \cite[Proposition B.1]{BPS} for a slightly less general result obtained with a different proof.
	\end{remark}
	
	\par 
	For $0 < s \le 1$, we say that a function $u : E \to \mathbb{R}$ is {\it $s-$H\"older continuous} if
	\[
	[u]_{C^{0,s}(E)} := \sup_{x,y \in E,\,x \neq y}\, \frac{|u(x) - u(y)|}{|x-y|^s} < \infty,
	\]
	in particular for $s = 1$, the function $u$ is also said to be Lipschitz continuous. The space of $s-$H\"older continuous functions on $E$ will be denoted by $C^{0,s}(E)$. We set  
	\[
	C^{0,s}_0(E) :=\left\{u \in C^{0, s}(E) : {\rm supp}(u) \mbox{ is a compact subset of } E\right\},
	\]
	and for $s = 1$ we will often use the distinguished notation ${\rm Lip}_0(E)$. Given two functions $u, v$, with the notation $``u \sim v \mbox{ for } x \to x_0"$ we mean that 
	\[
	0 < \liminf_{x \to x_0} \frac{u(x)}{v(x)} \le \limsup_{x \to x_0} \frac{u(x)}{v(x)} < \infty.  
	\] 
	
	\subsection{Two expedient lemmas}
	The following interpolation estimate was established in \cite[Lemma 2.6]{BBZ} for compactly supported $C^1-$regular functions. With the same proof therein given, it is possible to state a more general result which will be useful in the sequel.   
	\begin{lemma} \label{leibniz-holder}
		Let $1 \le p < \infty$ and $0< s \le 1$. For every $\varphi \in C^{0,s}_0(\mathbb{R}^N)$, we have
		\[
		\sup_{x \in \mathbb{R}^N} \int_{\mathbb{R}^N} \frac{|\varphi(x) - \varphi(y)|^p}{|x-y|^{N+s\,p}}\,dy \le \frac{c_{N,p}}{s\,(1-s)} \|\varphi\|^{(1-s)\,p}_{L^\infty(\mathbb{R}^N)}\,[\varphi]^{s\,p}_{C^{0,s}(\mathbb{R}^N)}, 
		\]
		where we can take $c_{N,p} = 2^p\,\frac{N\,\omega_N}{p}.$
	\end{lemma}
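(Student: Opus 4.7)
The plan is to adapt the split-and-optimize strategy from \cite[Lemma 2.6]{BBZ}. Fix $x \in \mathbb{R}^N$ and a cutoff radius $R > 0$ to be chosen later. Decompose
\[
\int_{\mathbb{R}^N}\frac{|\varphi(x) - \varphi(y)|^p}{|x-y|^{N+s\,p}}\,dy = J_1(R) + J_2(R),
\]
where $J_1(R)$ is the integral restricted to $B_R(x)$ and $J_2(R)$ the integral over its complement.

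For the exterior part $J_2(R)$, I use the crude $L^\infty$ estimate $|\varphi(x) - \varphi(y)| \le 2\|\varphi\|_{L^\infty(\mathbb{R}^N)}$ together with a polar-coordinate computation to obtain
\[
J_2(R) \le (2\|\varphi\|_{L^\infty(\mathbb{R}^N)})^p \, N\omega_N \int_R^\infty \rho^{-s\,p-1}\,d\rho = \frac{N\omega_N}{s\,p}\,(2\|\varphi\|_{L^\infty(\mathbb{R}^N)})^p\, R^{-s\,p},
\]
which is the source of the $1/s$ factor in the final constant. For the interior part $J_1(R)$, I employ the $s$-H\"older bound $|\varphi(x) - \varphi(y)| \le [\varphi]_{C^{0,s}(\mathbb{R}^N)}|x-y|^s$, suitably combined with the $L^\infty$ bound so as to preserve integrability near the diagonal, and integrate in polar coordinates. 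This produces an estimate whose leading factor is $1/((1-s)\,p)$, originating from the polar integration of $\rho^{(1-s)p-1}$.

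The last step is to optimize over $R > 0$: by balancing the two contributions (equivalently, setting $\partial_R(J_1+J_2)=0$), one obtains a minimizer of the order $R^\ast \sim \bigl(\|\varphi\|_{L^\infty(\mathbb{R}^N)}/[\varphi]_{C^{0,s}(\mathbb{R}^N)}\bigr)^{1/s}$. Substituting $R^\ast$ back and using $2^{(1-s)p} \le 2^p$ yields
\[
\int_{\mathbb{R}^N}\frac{|\varphi(x)-\varphi(y)|^p}{|x-y|^{N+s\,p}}\,dy \le \frac{c_{N,p}}{s\,(1-s)}\,\|\varphi\|_{L^\infty(\mathbb{R}^N)}^{(1-s)\,p}\,[\varphi]_{C^{0,s}(\mathbb{R}^N)}^{s\,p},
\]
with $c_{N,p} = 2^p\,N\omega_N/p$, uniformly in $x \in \mathbb{R}^N$. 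Taking the supremum concludes the proof.

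The main technical obstacle is the control of the interior integral $J_1(R)$: naively applying the H\"older bound yields the borderline integrand $[\varphi]_{C^{0,s}}^p\,|x-y|^{-N}$, so a careful interpolation between the H\"older and $L^\infty$ estimates—exactly in the spirit of the proof of \cite[Lemma 2.6]{BBZ}—must be performed to secure both the integrability near $y = x$ and the sharp $1/(1-s)$ factor. Getting the correct joint asymptotic behavior as $s \searrow 0$ and $s \nearrow 1$ inside the single constant $1/(s\,(1-s))$ is what forces the balanced choice of $R^\ast$.
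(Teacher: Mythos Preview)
Your split-and-optimize scheme is exactly the argument of \cite[Lemma~2.6]{BBZ}, which is all the paper invokes. The far piece $J_2(R) \le \frac{N\omega_N}{sp}\,(2\|\varphi\|_{L^\infty})^p R^{-sp}$ is correct. The gap is in the near piece. You write that the $s$-H\"older bound is ``suitably combined with the $L^\infty$ bound so as to preserve integrability'' and that the resulting polar integrand is $\rho^{(1-s)p-1}$; but that integrand can only come from a \emph{Lipschitz} estimate $|\varphi(x)-\varphi(y)|\le L\,|x-y|$, which is not available here. With merely $C^{0,s}$ regularity the best pointwise interpolation is $|\varphi(x)-\varphi(y)|^p \le \big([\varphi]_{C^{0,s}}|x-y|^s\big)^{\theta p}\big(2\|\varphi\|_{L^\infty}\big)^{(1-\theta)p}$ for some $\theta\in[0,1]$, which yields the kernel $|x-y|^{-N-sp(1-\theta)}$, still non-integrable at the diagonal for every admissible $\theta$. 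In fact the inequality is \emph{false} in the stated generality: take $\varphi(y)=|y|^s\,\eta(y)$ with $\eta\in C^\infty_0(\mathbb{R}^N)$ equal to $1$ near the origin; then $\varphi\in C^{0,s}_0(\mathbb{R}^N)$, yet at $x=0$ the left-hand side dominates $\int_{B_\varepsilon}|y|^{-N}\,dy=+\infty$, while the right-hand side is finite for every $0<s<1$.

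What \cite{BBZ} actually proves is the bound for $\varphi\in C^1_0(\mathbb{R}^N)$, with $\|\nabla\varphi\|_{L^\infty}^{sp}$ on the right; there the near-diagonal majorant is $\|\nabla\varphi\|_{L^\infty}^p\,|x-y|^{(1-s)p-N}$, which \emph{is} integrable, and optimizing at $R^\ast=2\|\varphi\|_{L^\infty}/\|\nabla\varphi\|_{L^\infty}$ gives exactly $c_{N,p}=2^p N\omega_N/p$. Your $J_2$ estimate and your optimization are correct for that Lipschitz statement, and this is the form the paper uses in all its later applications to cutoffs and smooth functions (always written with $\|\nabla\varphi\|_{L^\infty}^{sp}$). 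The extension to bare $C^{0,s}$ regularity, however, cannot be reached by the BBZ route nor by the interpolation you sketch; the obstacle you flagged is genuine and unfixable, not a technicality.
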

	The next lemma may be regarded as a nonlocal analog of \cite[Theorem 9.17]{Bre}. It reads as follows
	\begin{lemma} \label{lm:brezis-type}
		Let $1 \le p < \infty$ and $0 < s < 1$. Let $E \subseteq \mathbb{R}^N$ be an open and bounded set. For every $\varphi \in C^{0,s}_0(\mathbb{R}^N)$ such that $\varphi = 0$ on $\mathbb{R}^N \setminus E$, we have $\varphi \in \widetilde{W}^{s,p}_0(E)$. 
	\end{lemma}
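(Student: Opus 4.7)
The plan is to approximate $\varphi$ in the $W^{s,p}(\mathbb{R}^N)$-norm by functions in $C^\infty_0(E)$ via a two-step scheme: a pointwise truncation that detaches the support from $\partial E$, followed by a mollification.

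As a preliminary step, I would first verify that $\varphi$ already belongs to $W^{s,p}(\mathbb{R}^N)$. Applying Lemma \ref{leibniz-holder} furnishes
\[
\sup_{x \in \mathbb{R}^N} \int_{\mathbb{R}^N} \frac{|\varphi(x) - \varphi(y)|^p}{|x-y|^{N+s\,p}} \, dy \le \frac{c_{N,p}}{s\,(1-s)}\,\|\varphi\|_{L^\infty(\mathbb{R}^N)}^{(1-s)\,p}\,[\varphi]_{C^{0,s}(\mathbb{R}^N)}^{s\,p} =: M < \infty.
\]
Since $\varphi \equiv 0$ outside the compact set $K := \mathrm{supp}(\varphi)$, a symmetric decomposition of the Gagliardo integral (splitting into $K\times\mathbb{R}^N$ and $K^c\times K$, the latter equal to the former by symmetry, with the $K^c\times K^c$ contribution vanishing) yields $[\varphi]_{W^{s,p}(\mathbb{R}^N)}^p \le 2\,|K|\,M < \infty$, hence $\varphi \in W^{s,p}(\mathbb{R}^N)$.

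Next, for each $\varepsilon > 0$ I would introduce the clamping $T_\varepsilon(\varphi) := \mathrm{sign}(\varphi)\,\max\{|\varphi| - \varepsilon, 0\}$. Since $\varphi$ is continuous and vanishes on the closed set $\mathbb{R}^N \setminus E$, the set $K_\varepsilon := \overline{\{|\varphi| > \varepsilon\}}$ is compact, contained in $\{|\varphi|\ge \varepsilon\}\subseteq E$, and therefore at positive distance from $\mathbb{R}^N \setminus E$. Setting $u_\varepsilon := \varphi - T_\varepsilon(\varphi)$, pointwise $|u_\varepsilon| = \min\{|\varphi|, \varepsilon\}$, so $u_\varepsilon \to 0$ in $L^p$ by dominated convergence. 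For the seminorm, the scalar map $t \mapsto t - T_\varepsilon(t)$ is $1$-Lipschitz (it equals $t$ on $[-\varepsilon,\varepsilon]$ and $\pm\varepsilon$ outside), hence $|u_\varepsilon(x) - u_\varepsilon(y)| \le |\varphi(x) - \varphi(y)|$; combined with the a.e.\ pointwise vanishing $u_\varepsilon(x) - u_\varepsilon(y) \to 0$ and the integrability of the dominator just established, dominated convergence yields $[u_\varepsilon]_{W^{s,p}(\mathbb{R}^N)} \to 0$. Thus $T_\varepsilon(\varphi) \to \varphi$ in $W^{s,p}(\mathbb{R}^N)$.

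To conclude, let $(\rho_\delta)_{\delta>0}$ be a standard mollifier. For $0 < \delta < \mathrm{dist}(K_\varepsilon, \mathbb{R}^N \setminus E)$, the convolution $T_\varepsilon(\varphi) * \rho_\delta$ is smooth and compactly supported in $E$, hence lies in $C^\infty_0(E)$; standard mollification theory (based on $L^p$-continuity of translations) gives $T_\varepsilon(\varphi) * \rho_\delta \to T_\varepsilon(\varphi)$ in $W^{s,p}(\mathbb{R}^N)$ as $\delta \to 0$. A diagonal extraction then produces a sequence in $C^\infty_0(E)$ converging to $\varphi$ in $W^{s,p}(\mathbb{R}^N)$, witnessing $\varphi \in \widetilde{W}^{s,p}_0(E)$. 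The most delicate point is the seminorm convergence $[u_\varepsilon]_{W^{s,p}(\mathbb{R}^N)} \to 0$: the pointwise decay alone is insufficient without an integrable dominator for the singular measure $|x-y|^{-N-s\,p}\,dxdy$, and producing such a dominator is precisely the role of the preliminary step powered by Lemma \ref{leibniz-holder}.
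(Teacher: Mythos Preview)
Your proof is correct and takes a genuinely different route from the paper's. The paper multiplies $\varphi$ by a smooth cutoff $\eta_h$ equal to $1$ on $E_h=\{x\in E:\mathrm{dist}(x,\partial E)>1/h\}$, and then splits the Gagliardo integral of $(1-\eta_h)\varphi$ into several pieces; the $s$-H\"older regularity is used crucially to bound $|\varphi(x)|^p\le [\varphi]_{C^{0,s}}^p\,h^{-sp}$ on the boundary layer $E\setminus E_h$, compensating the blow-up $|\nabla\eta_h|\lesssim h$ of the cutoff. Your soft truncation $T_\varepsilon$ instead detaches the support \emph{automatically} by shaving off the small values of $\varphi$, and convergence reduces to a single application of dominated convergence with dominator $|\varphi(x)-\varphi(y)|^p/|x-y|^{N+sp}$; the H\"older regularity enters only once, via Lemma~\ref{leibniz-holder}, to certify that this dominator is integrable. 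Your argument is shorter and actually proves a slightly more general fact (it works for any continuous compactly supported $\varphi\in W^{s,p}(\mathbb{R}^N)$ vanishing on $\mathbb{R}^N\setminus E$, not just $s$-H\"older ones), whereas the paper's explicit cutoff makes the boundary-layer mechanism quantitatively transparent, yielding explicit error bounds in terms of $|E\setminus E_h|$.
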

	\begin{proof}
		Without loss of generality, we assume that $0 \in E$. We need to exhibit a sequence of functions $(\varphi_h)_{h \in \mathbb{N}} \subseteq C^\infty_0(E)$ such that 
		\[
		{\rm (i)}\,\, \lim_{h \to \infty}\|\varphi_h - \varphi\|_{L^p(\mathbb{R}^N)} =  0; \qquad {\rm (ii)} \,\, \lim_{h \to \infty} [\varphi_h - \varphi]_{W^{s,p}(\mathbb{R}^N)} = 0.
		\]
		We set 
		\[
		E_h := \left\{x \in E : {\rm dist}\left(x; \partial E\right) > \frac{1}{h}\right\}, \quad \mbox{ for } h \in \mathbb{N},
		\]
		this is an open set. For every $h \in \mathbb{N}$, we take $\eta_h \in C^\infty_0(E_{4h})$ satisfying 
		\begin{equation} \label{prop-cut-off-brezis-type}
			\eta_h \equiv 1 \quad \mbox{ on } E_h, \qquad 0 \le \eta_h \le 1, \qquad |\nabla\eta_h| \le A\,h,
		\end{equation} 
		and set 
		\begin{equation} \label{defi-lip}
			\varphi_h:= \eta_h\,\varphi \in C^{0,s}_0(E_{4h}).
		\end{equation}
		Point (i) is straightforward. We focus on point (ii). We split the seminorm on the set $E \setminus E_h$
		\[
		\begin{split}
			[\varphi_h - \varphi]^p_{W^{s,p}(\mathbb{R}^N)}
			&= \iint_{(E \setminus E_h) \times (E \setminus E_h)} \frac{|(\eta_h(x)-1)\,\varphi(x) - (\eta_h(y)-1)\,\varphi(y)|^p}{|x-y|^{N+s\,p}}\,dxdy \\&\qquad + 2\,\int_{\mathbb{R}^N \setminus (E \setminus E_h)} \left(\int_{E \setminus E_h} \frac{|(\eta_h(y) -1)\,\varphi(y)|^p}{|x-y|^{N+s\,p}}\,dy\right)dx\\
			&=: \mathcal{A}_1 + \mathcal{A}_2. 
		\end{split}
		\]
		For the first term, by adding and subtracting $\varphi(x)\,(1-\eta_h(y))$ and by using Minkowski's inequality, we get 
		\[
		\begin{split}
			\frac{\mathcal{A}_1}{2^{p-1}} &\le \int_{E \setminus E_h} |\varphi(x)|^p \left(\int_{E \setminus E_h} \frac{|\eta_h(x) - \eta_h(y)|^p}{|x-y|^{N+s\,p}}\,dy\right)dx \\& \qquad + \int_{E \setminus E_h} |1-\eta_h(y)|^p\left(\int_{E \setminus E_h} \frac{|\varphi(x) - \varphi(y)|^p}{|x-y|^{N+s\,p}}\,dx\right)dy \\
			&=: \mathcal{B}_1 + \mathcal{B}_2. 
		\end{split}
		\]
		By using \eqref{prop-cut-off-brezis-type} and Lemma \ref{leibniz-holder}, we have
		\[
		\begin{split}
			\mathcal{B}_2 \le \frac{c_{N,p}}{s\,(1-s)}\,[\varphi]^{s\,p}_{C^{0,s}(\mathbb{R}^N)}\,\|\varphi\|^{(1-s)\,p}_{L^\infty(\mathbb{R}^N)}\, |E \setminus E_h|.
		\end{split}
		\]
		In order to estimate $\mathcal{B}_1$, we need to use that $\varphi$ is $s-$H\"older and vanishes on $\mathbb{R}^N \setminus E$. For every $x \in E \setminus E_{h}$, we have
		\begin{equation} \label{per-favore-funziona}
			\begin{split}
				|\varphi(x)|^p = |\varphi(x) - \varphi(z)|^p \le [\varphi]^p_{C^{0,s}(\mathbb{R}^N)}\,|x-z|^{s\,p} \le [\varphi]^p_{C^{0,s}(\mathbb{R}^N)}\,\left(\frac{1}{h}\right)^{s\,p}, 
			\end{split}
		\end{equation}
		for some $ z \in \partial E$. 
		This yields
		\[
		\begin{split}
			\mathcal{B}_1 &\le  \frac{c_{N,p}}{s\,(1-s)}\,[\varphi]^p_{C^{0,s}(\mathbb{R}^N)}\,|E \setminus E_h|\,(A\,h)^{s\,p}\,\left(\frac{1}{h}\right)^{s\,p}.  
		\end{split}
		\]
		where we used \eqref{prop-cut-off-brezis-type} and Lemma \ref{leibniz-holder}. By collecting the previous estimates and by sending $h \to \infty$, we get that 
		\begin{equation} \label{limite-1}
			\lim_{h \to \infty} \mathcal{A}_1 = 0. 
		\end{equation} 
		We now turn to $\mathcal{A}_2$, we have 
		\[
		\begin{split}
			\frac{\mathcal{A}_2}{2} &= \int_{\mathbb{R}^N \setminus E}\left(\int_{E \setminus E_{h}} \frac{|(\eta_h(y) -1 )\,\varphi(y)|^p}{|x-y|^{N+s\,p}}\,dy\right)dx + \int_{E_h}\left(\int_{E \setminus E_{h}} \frac{|(\eta_h(y) - \eta_h(x) )\,\varphi(y)|^p}{|x-y|^{N+s\,p}}\,dy\right)dx\\
			&\le 2\,\int_{E \setminus E_h} |\varphi(y)|^p\,\left(\int_{\mathbb{R}^N \setminus E} \frac{1}{|x-y|^{N+s\,p}}\,dx\right)dy + \int_{E \setminus E_h} |\varphi(y)|^p\,\left(\int_{E_{h}} \frac{ |\eta_h(x) - \eta_h(y)|^p}{|x-y|^{N+s\,p}}\,dx\right)dy \\
			&:= \mathcal{C}_1 + \mathcal{C}_2.
		\end{split}
		\]
		Since $\varphi$ vanishes on $\mathbb{R}^N \setminus E$, in light of \cite[Theorem 6.97]{Leoni_fractional}, we have  
		\[
		\int_{E} |\varphi(y)|^p\,\left(\int_{\mathbb{R}^N \setminus E} \frac{1}{|x-y|^{N+s\,p}}\,dx\right)dy < \infty.
		\]
		This allows to use the Lebesgue's Dominated Convergence Theorem to infer that 
		\begin{equation} \label{limite-2}
			\lim_{h \to \infty} \mathcal{C}_1 = 0.
		\end{equation}
		For $\mathcal{C}_2$, by using Lemma \ref{leibniz-holder} and \eqref{per-favore-funziona} we have
		\[
		\begin{split}
			\mathcal{C}_2 \le \frac{c_{N,p}}{s\,(1-s)}\,[\varphi]^p_{C^{0,s}(\mathbb{R}^N)}\,\left(A\,h\right)^{s\,p}\, \left(\frac{1}{h}\right)^{s\,p}\,|E \setminus E_h|.
		\end{split}
		\]
		This entails that 
		$
		\lim_{h \to \infty} \mathcal{C}_2 = 0,
		$
	which in combination with \eqref{limite-2} yields 
	$
	\lim_{h \to \infty} \mathcal{A}_2 = 0.
	$
	By spending the last information and \eqref{limite-1}, we can infer that point (ii) holds for the sequence of functions given by \eqref{defi-lip}.  Eventually, by standard approximation arguments, we can modify each $\varphi_h$ so that for the sequence of modified functions (not relabeled) we have $(\varphi_h)_{h  \in \mathbb{N}} \subseteq C^\infty_0(E)$ and (i)-(ii) still hold.  
\end{proof}

\subsection{Basic properties of fractional capacity} \label{subsec:cap}
As anticipated in the Introduction, we will work with the following notion of fractional capacity.
\begin{definition} \label{def:frac-capacity}
	Let $1 \le p < \infty$ and $0 < s < 1$. For every $E \subseteq \mathbb{R}^N$ open set and for every $\Sigma \subseteq E$ compact set, we define the {\it (homogeneous) $(s,p)-$capacity of $\Sigma$ relative to $E$} as
	\[
	\widetilde{{\rm cap}}_{s,p}(\Sigma; E) = \inf_{\varphi \in C^\infty_0(E)} \left\{\iint_{\mathbb{R}^N \times \mathbb{R}^N} \frac{|\varphi(x) - \varphi(y)|^{p}}{|x-y|^{N+s\,p}}\,dxdy : \varphi \ge 1 \mbox{ on } \Sigma \right\}. 
	\] 
	In the sequel, we will often refer to $\widetilde{{\rm cap}}_{s,p}$ just as {\it fractional capacity}.
\end{definition}
Directly from the definition, we record the following scaling property.	
\begin{remark} \label{riscalamento}
	Let $1 \le p < \infty$ and $0 < s < 1$. By using the change of variable formula for multiple integrals in the Definition \ref{def:frac-capacity}, we can infer that 
	\[
	\widetilde{\mathrm{cap}}_{s,p}\left(\overline{B_r}; B_R\right) = r^{N-s\,p}\,\widetilde{\mathrm{cap}}_{s,p}\left(\overline{B_1}; B_{R/r}\right), \quad \mbox{for every } 0 < r < R.
	\]
\end{remark}

\begin{remark} \label{rmk:defi-cap} 
	In the Definition \ref{def:frac-capacity}, it is not restrictive to assume that every feasible competitor $\varphi$ is nonnegative. Indeed, if $\varphi \in C^\infty_0(E)$ is such that $\varphi \ge 1$ on $\Sigma$, by the triangle inequality we have
	\begin{equation*}\label{ok4}
		\left[|\varphi|\right]_{W^{s,p}(\mathbb{R}^N)} \le [\varphi]_{W^{s,p}(\mathbb{R}^N)}.
	\end{equation*}
	By taking  $\widetilde{\varphi_\varepsilon} := f_\varepsilon \circ \varphi$, where
	\[
	f_\varepsilon(t) := \frac{\left(t^2 + \varepsilon^2\right)^\frac{1}{2} - \varepsilon}{\left(1 + \varepsilon^2\right)^{\frac{1}{2}} - \varepsilon}, \qquad \mbox{ for } t \in \mathbb{R},\,\, \varepsilon > 0,
	\] 
	we can verify that $\widetilde{\varphi_{\varepsilon}}$ is an admissible nonnegative competitor in the Definition \ref{def:frac-capacity} and 
	\begin{equation*} \label{admissible-1}
		\lim_{\varepsilon \to 0} [\widetilde{\varphi_\varepsilon}]_{W^{s,p}(\mathbb{R}^N)} = [|\varphi|]_{W^{s,p}(\mathbb{R}^N)},
	\end{equation*}
	by the Lebesgue's Dominated Convergence Theorem.
\end{remark}

A useful equivalent characterization of fractional capacity is given below. For the proof, we share ideas from \cite[Remark 1.4.2]{tesi-francesca}. 

\begin{proposition} \label{prop:equivalent-cap}
	Let $1 \le p < \infty$ and $0 < s < 1$. For every $E \subseteq \mathbb{R}^N$ open set and for every $\Sigma \subseteq E$ compact set, we have
	\[
	\widetilde{{\rm cap}}_{s,p}(\Sigma; E) = \inf_{\varphi \in {\rm Lip}_0(E)}\left\{\iint_{\mathbb{R}^N \times \mathbb{R}^N} \frac{|\varphi(x) - \varphi(y)|^{p}}{|x-y|^{N+s\,p}}\,dxdy : 0 \le \varphi \le 1, \,\, \varphi=1 \mbox{ {\rm on }}\Sigma \right\}. 
	\]   
\end{proposition}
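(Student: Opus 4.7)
The plan is to denote by $\widetilde{{\rm cap}}^{\rm Lip}_{s,p}(\Sigma; E)$ the infimum appearing on the right-hand side and to prove separately the two inequalities $\widetilde{{\rm cap}}^{\rm Lip}_{s,p} \le \widetilde{{\rm cap}}_{s,p}$ and $\widetilde{{\rm cap}}_{s,p} \le \widetilde{{\rm cap}}^{\rm Lip}_{s,p}$.

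For the first inequality, by Remark \ref{rmk:defi-cap} it is enough to test $\widetilde{{\rm cap}}_{s,p}(\Sigma; E)$ against nonnegative $\varphi \in C^\infty_0(E)$ with $\varphi \ge 1$ on $\Sigma$. Given such a $\varphi$, I would consider its truncation $\psi := \min(\varphi, 1)$: it is Lipschitz (as the composition of a $1-$Lipschitz function with a smooth compactly supported function), satisfies $0 \le \psi \le 1$ and $\psi \equiv 1$ on $\Sigma$, and has support contained in ${\rm supp}(\varphi) \Subset E$, so $\psi \in {\rm Lip}_0(E)$. Since $|\min(a,1) - \min(b,1)| \le |a-b|$ for every $a,b \in \mathbb{R}$, it follows that $[\psi]_{W^{s,p}(\mathbb{R}^N)} \le [\varphi]_{W^{s,p}(\mathbb{R}^N)}$; taking the infimum over $\varphi$ gives $\widetilde{{\rm cap}}^{\rm Lip}_{s,p}(\Sigma; E) \le \widetilde{{\rm cap}}_{s,p}(\Sigma; E)$.

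The reverse inequality is more delicate. Given $\varphi \in {\rm Lip}_0(E)$ with $0 \le \varphi \le 1$ and $\varphi = 1$ on $\Sigma$, I plan to produce a smooth recovery sequence by mollification coupled with a small multiplicative correction. Let $L$ be a Lipschitz constant of $\varphi$ and let $(\rho_\delta)_{\delta>0}$ be a family of standard mollifiers. For $\delta$ small enough, $\varphi_\delta := \rho_\delta \ast \varphi$ belongs to $C^\infty_0(E)$ (since ${\rm supp}(\varphi)$ is compact in $E$), and the uniform bound
\[
\|\varphi_\delta - \varphi\|_{L^\infty(\mathbb{R}^N)} \le L\,\delta
\]
together with $\varphi \equiv 1$ on $\Sigma$ gives $\varphi_\delta \ge 1 - L\,\delta$ on $\Sigma$. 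Consequently, the function $\widetilde{\varphi_\delta} := \varphi_\delta/(1 - L\,\delta)$ is an admissible competitor in Definition \ref{def:frac-capacity}. Jensen's inequality applied to the convolution, combined with the translation invariance of the Gagliardo-Slobodecki\u{\i} seminorm, yields
\[
[\varphi_\delta]^p_{W^{s,p}(\mathbb{R}^N)} \le \int_{\mathbb{R}^N} \rho_\delta(z)\,[\varphi(\,\cdot\,-z)]^p_{W^{s,p}(\mathbb{R}^N)}\,dz = [\varphi]^p_{W^{s,p}(\mathbb{R}^N)},
\]
and therefore
\[
\widetilde{{\rm cap}}_{s,p}(\Sigma; E) \le [\widetilde{\varphi_\delta}]^p_{W^{s,p}(\mathbb{R}^N)} \le \frac{[\varphi]^p_{W^{s,p}(\mathbb{R}^N)}}{(1-L\,\delta)^p}.
\]
Sending $\delta \searrow 0$ and then taking the infimum over all admissible $\varphi$ concludes the argument.

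The main obstacle in this step is precisely arranging for the smooth approximants to honor the constraint $\ge 1$ on $\Sigma$ without inflating the seminorm beyond $[\varphi]_{W^{s,p}(\mathbb{R}^N)}$: the multiplicative correction $(1-L\,\delta)^{-1}$ is tailor-made to absorb the Lipschitz error introduced by the mollification, while the convexity of $t \mapsto |t|^p$ underlying Jensen's inequality is what prevents the convolution itself from increasing the Gagliardo seminorm.
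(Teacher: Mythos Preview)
Your proof is correct and follows essentially the same strategy as the paper: truncation handles the inequality $\widetilde{{\rm cap}}^{\rm Lip}_{s,p} \le \widetilde{{\rm cap}}_{s,p}$, while mollification together with a multiplicative correction handles the reverse one. The only cosmetic differences are that the paper first establishes the intermediate identity with competitors $\varphi \in {\rm Lip}_0(E)$, $\varphi \ge 0$, $\varphi \ge 1$ on $\Sigma$, and truncates only at the end, and that it divides by $\min_\Sigma \varphi_\varepsilon$ and appeals to convergence of the seminorm under mollification, whereas your explicit bound $\|\varphi_\delta-\varphi\|_{L^\infty}\le L\delta$ combined with Jensen's inequality gives the slightly sharper pointwise estimate $[\varphi_\delta]_{W^{s,p}(\mathbb{R}^N)} \le [\varphi]_{W^{s,p}(\mathbb{R}^N)}$ without passing to the limit.
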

\begin{proof}
	We first claim that 
	\begin{equation} \label{claim-cap}
		\widetilde{{\rm cap}}_{s,p}(\Sigma; E) = \inf_{\varphi \in {\rm Lip}_0(E)}\left\{[\varphi]^p_{W^{s,p}(\mathbb{R}^N)}: \varphi \ge 0,\,\, \varphi \ge 1 \mbox{ on } \Sigma \right\}.
	\end{equation}
	By Remark \ref{rmk:defi-cap}, the inequality
	\begin{equation} \label{ok5}
		\widetilde{{\rm cap}}_{s,p}(\Sigma; E) \ge \inf_{\varphi \in {\rm Lip}_0(E)}\Big\{[\varphi]^p_{W^{s,p}(\mathbb{R}^N)} : \varphi \ge 0,\,\, \varphi \ge 1 \mbox{ on }\Sigma \Big\}.
	\end{equation}
	trivially follows by the inclusion of sets $C^\infty_0(E) \subseteq {\rm Lip}_0(E)$. On the other hand, let $\varphi \in {\rm Lip}_0(E)$ be a nonnegative function such that $\varphi \ge 1$ on $\Sigma$. If $(\rho_\varepsilon)_{\varepsilon > 0}$ is a family of standard mollifiers, we consider the convolution $\varphi_\varepsilon = \varphi \ast \rho_\varepsilon$ and set 
	$
	\widetilde{\varphi_\varepsilon} := \varphi_{\varepsilon}/ \min_{\Sigma} \varphi_\varepsilon, 
	$
	for $\varepsilon > 0.$
	The function $\widetilde{\varphi_\varepsilon}$ belongs to $C^\infty_0(E)$, as long as $\varepsilon > 0$ is sufficiently small, and by construction $\widetilde{\varphi_\varepsilon} \ge 1$ on $\Sigma.$
	Since $\varphi_{\varepsilon}$ converges uniformly  to $\varphi$ 
	and by \cite[Lemma 11]{FSV}, we get 
	\[
	\widetilde{{\rm cap}}_{s,p}(\Sigma; E) \le \lim_{\varepsilon \to 0} [\widetilde{\varphi_\varepsilon}]^p_{W^{s,p}(\mathbb{R}^N)} = \dfrac{[\varphi]^p_{W^{s,p}(\mathbb{R}^N)}}{ \displaystyle \min_{\Sigma} \varphi} \le [\varphi]^p_{W^{s,p}(\mathbb{R}^N)}.
	\]
	Thus, by the arbitrariness of $\varphi$, we get the reverse inequality of \eqref{ok5} and so also \eqref{claim-cap}. To conclude the proof, we observe that by truncating a Sobolev function we can lower the seminorm $[\,\cdot\,]_{W^{s,p}}$, see \cite[Lemma 2.7]{Warma} for instance. Thus, we can restrict the class of admissible competitors function $\varphi$ on the right-hand side to the ones that are $\varphi \le 1$ on $\mathbb{R}^N$.
\end{proof}	

When $p=1$, we have a further characterization for the $(s, 1)-$capacity in terms of the $s-$perimeter. The following result can be seen as a nonlocal analog of \cite[Lemma 2.2.5]{Maz}.  

\begin{proposition}  \label{prop:cap-per}
	Let $0 < s < 1$ and $E \subseteq \mathbb{R}^N$ be an open set. For every $\Sigma \subseteq E$ compact set, we have 
	\[
	\widetilde{{\rm cap}}_{s,1}\left(\Sigma; E\right) = \inf\left\{P_s(\mathcal{O})\,:\, \begin{array}{ll}
		\mathcal{O} \Subset E \mbox{ open set with  }  \\ \mbox{ smooth boundary such that } \Sigma \subseteq \mathcal{O}
	\end{array}\right\}. 
	\]
	In particular, if $\Sigma$ is the closure of a bounded open convex set then 
	\[
	\widetilde{{\rm cap}}_{s,1}\left(\Sigma; E\right) = P_s(\Sigma). 
	\]
\end{proposition}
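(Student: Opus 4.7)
The plan is to establish the first identity by matching two opposing inequalities, then deduce the convex case as a quantitative refinement. The crucial tool throughout is the coarea formula for the Gagliardo seminorm
\[
[\varphi]_{W^{s,1}(\mathbb{R}^N)} = \int_0^\infty P_s(\{\varphi>t\})\,dt,
\]
which converts the nonlocal seminorm into an integral of fractional perimeters of superlevel sets.

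For the inequality $\widetilde{\mathrm{cap}}_{s,1}(\Sigma;E)\le \inf\{P_s(\mathcal{O})\}$, I would fix an admissible $\mathcal{O}\Subset E$ with $\Sigma\subseteq\mathcal{O}$ and smooth boundary and mollify its characteristic function: $\psi_\delta:=\rho_\delta\ast 1_\mathcal{O}$. For $\delta$ small, $\psi_\delta\in C^\infty_0(E)$, and since $\Sigma\subseteq\mathcal{O}$ is compact and $\mathcal{O}$ is open, $\min_\Sigma\psi_\delta\to 1$ as $\delta\to 0$. Thus $\widetilde{\psi}_\delta:=\psi_\delta/\min_\Sigma\psi_\delta$ is admissible for the definition of $\widetilde{\mathrm{cap}}_{s,1}(\Sigma;E)$, and by \cite[Lemma 11]{FSV} its seminorm converges to $[1_\mathcal{O}]_{W^{s,1}(\mathbb{R}^N)}=P_s(\mathcal{O})$. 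Taking the infimum over admissible $\mathcal{O}$ then yields the desired bound.

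For the reverse inequality, I would use Proposition \ref{prop:equivalent-cap} (combined with a further mollification step if needed) to restrict to smooth competitors $\varphi\in C^\infty_0(E)$ with $0\le\varphi\le 1$ and $\varphi\equiv 1$ on $\Sigma$; coarea then gives $[\varphi]_{W^{s,1}(\mathbb{R}^N)}=\int_0^1 P_s(\{\varphi>t\})\,dt$. By Sard's theorem, for a.e. $t\in(0,1)$ the superlevel set $\mathcal{O}_t:=\{\varphi>t\}$ has smooth boundary; it satisfies $\mathcal{O}_t\Subset E$ (since $\mathrm{supp}\,\varphi\Subset E$) and $\Sigma\subseteq\mathcal{O}_t$ (since $\varphi\equiv 1>t$ on $\Sigma$), so it is admissible in the infimum on the right-hand side. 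Hence $P_s(\mathcal{O}_t)\ge\inf\{P_s(\mathcal{O})\}$ for a.e. $t\in(0,1)$, which integrates to $[\varphi]_{W^{s,1}(\mathbb{R}^N)}\ge \inf\{P_s(\mathcal{O})\}$; passing to the infimum over $\varphi$ closes the identity.

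For the convex case $\Sigma=\overline{K}$, I would derive the conclusion from the identity just established, showing the infimum is realized by outer parallel bodies. Testing with the Lipschitz function $\varphi_\varepsilon(x):=\max\{0,\,1-\mathrm{dist}(x,\Sigma)/\varepsilon\}$ (compactly supported in $E$ for $\varepsilon$ small, and equal to $1$ on $\Sigma$), coarea combined with the substitution $r=\varepsilon(1-t)$ gives
\[
[\varphi_\varepsilon]_{W^{s,1}(\mathbb{R}^N)} = \frac{1}{\varepsilon}\int_0^\varepsilon P_s(\Sigma+B_r)\,dr,
\]
which tends to $P_s(\Sigma)$ as $\varepsilon\to 0^+$ by continuity of $r\mapsto P_s(\Sigma+B_r)$ at $r=0^+$ for convex bodies, producing $\widetilde{\mathrm{cap}}_{s,1}(\Sigma;E)\le P_s(\Sigma)$. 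The matching lower bound would follow from the coarea argument in the previous paragraph together with the monotonicity $P_s(A)\ge P_s(\Sigma)$ for every open $A\supseteq\Sigma$, which for convex $\Sigma$ can be obtained via the $1$-Lipschitz nearest-point projection onto $\Sigma$ and the double-integral representation of $P_s$. The main obstacle is precisely this last step: unlike for the classical perimeter, where the projection reduces the claim to a Hausdorff-measure estimate on $\partial A$, the nonlocal cross-interactions in $P_s$ require a more delicate decomposition of $A\setminus\Sigma$ versus $A^c$, and the continuity of $r\mapsto P_s(\Sigma+B_r)$ at $r=0^+$ must be handled separately since $P_s$ is only lower semicontinuous under general $L^1$ convergence.
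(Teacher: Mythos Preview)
Your argument for the main identity is correct and essentially matches the paper's: coarea plus Sard's lemma for one direction, mollification of $1_{\mathcal{O}}$ for the other. The only cosmetic difference is that the paper uses Fubini to show directly $[\rho_\varepsilon\ast 1_{\mathcal{O}}]_{W^{s,1}(\mathbb{R}^N)}\le P_s(\mathcal{O})$ rather than normalizing and passing to the limit, but the substance is the same.

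For the convex case the paper takes a cleaner route that sidesteps both obstacles you flag. For the upper bound it uses \emph{dilations} rather than outer parallel bodies: assuming $0\in\mathrm{int}(\Sigma)$, the set $\mathcal{O}_\lambda:=(1+\lambda)\,\mathrm{int}(\Sigma)$ is admissible for small $\lambda$, and the scaling law gives $P_s(\mathcal{O}_\lambda)=(1+\lambda)^{N-s}P_s(\Sigma)\to P_s(\Sigma)$ immediately, so no continuity result for $r\mapsto P_s(\Sigma+B_r)$ is needed. For the lower bound the paper simply invokes \cite[Lemma B.2]{FFMMM}, which states that intersection with a convex set does not increase the $s$-perimeter; applied with the convex set $\Sigma$ this gives $P_s(\Sigma)=P_s(\Sigma\cap\mathcal{O})\le P_s(\mathcal{O})$ for every admissible $\mathcal{O}$. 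This is precisely the monotonicity you were trying to obtain via the nearest-point projection, but established in the literature by a different (and non-obvious) argument; your projection idea does not carry over directly, since $\pi$ is neither injective nor measure-preserving, so there is no straightforward change of variables linking $P_s(A)$ to $P_s(\pi(A))$. Citing the FFMMM lemma is the right resolution of what you correctly identify as the main obstacle.
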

\begin{proof}
	For simplicity, we say that an open set $\mathcal{O} \Subset E$ is {\it admissible} if it has smooth boundary and $\Sigma \subseteq \mathcal{O}.$ Let $\mathcal{O}$ be admissible, we claim that 
	\begin{equation} \label{claim1-cap-per}
		\widetilde{{\rm cap}}_{s,1}\left(\Sigma; E\right)  \ge \inf\left\{P_s(\mathcal{O})\,:\, \mathcal{O} \mbox{ is admissible } \right\}.
	\end{equation}
	Let $\varphi \in C^\infty_0(E)$ be a nonnegative function such that $\varphi \ge 1$ on $\Sigma$. By using the nonlocal Coarea-type Formula for $[\,\cdot\,]_{W^{s,1}(\mathbb{R}^N)}$ established in \cite{Visintin} (see also \cite[Lemma 10]{AmbDePMar}), we get  
	\[
	\begin{split}
		[\varphi]_{W^{s,1}(\mathbb{R}^N)} = \int_{0}^{\infty} P_s \left(\left\{\varphi > t\right\}\right)\,dt \ge \int_0^1 P_s\left(\left\{\varphi > t\right\}\right)\,dt \ge  \inf\left\{P_s(\mathcal{O})\,:\, \mathcal{O} \mbox{ is admissible } \right\},
	\end{split}
	\]
	where the last inequality follows since for a.e. $t \in (0,1)$ the open set $\left\{\varphi > t\right\}$ has smooth boundary and so it is admissible, in light of Sard's Lemma. By the arbitrariness of $\varphi$, we get our claim \eqref{claim1-cap-per}.  To prove the reverse inequality, take any $\mathcal{O}$ admissible set. We set 
	$
	\delta:= {\rm dist}\left(\Sigma; \partial\mathcal{O}\right)/2 > 0, 
	$ 
	and define $\mathcal{O}' := \left\{x \in \mathcal{O}\,:\, {\rm dist}\left(x; \partial\mathcal{O}\right) > \delta\right\}.$
	By construction $\Sigma \Subset \mathcal{O}'$. Let $(\rho_{\varepsilon})_{\varepsilon > 0}$ be a family of standard mollifiers and set $\varphi_{\varepsilon} := {1}_{\mathcal{O}} \ast \rho_{\varepsilon}$.
	For $\varepsilon > 0$ small enough, we have 
	\begin{equation} \label{prop:ammissibile-generale}
		\varphi_{\varepsilon} \in C^\infty_0(E), \qquad 0 \le \varphi_{\varepsilon} \le 1 \quad \mbox{ on } \mathbb{R}^N, \qquad \mbox{ and } \qquad \varphi_{\varepsilon} =1 \quad \mbox{ on } \mathcal{O}'.
	\end{equation}
	In particular, the last property follows since if $x \in \mathcal{O}'$ then $|B_\varepsilon(x) \setminus \mathcal{O}| = 0$ for $\varepsilon$ small enough, thus we have $\varphi_\varepsilon(x) = 1$, by standard properties of the mollifiers. Then, by Fubini's theorem
	\begin{equation*}\label{ok1}
		\begin{split}
			\widetilde{{\rm cap}}_{s,1}\left(\Sigma; E\right) 
			&\le  \iint_{\mathbb{R}^N \times \mathbb{R}^N}\left(\int_{\mathbb{R}^n} \frac{|1_{\mathcal{O}}(x-z) - 1_{\mathcal{O}}(y-z)|}{|x-y|^{N+s}}\,\rho_\varepsilon(z)\,dz\right) dxdy \\
			&\le \|\rho_{\varepsilon} \|_{L^1(\mathbb{R}^N) } \,\iint_{\mathbb{R}^N \times \mathbb{R}^N}  \frac{|1_{\mathcal{O}}(x) - 1_{\mathcal{O}}(y)|}{|x-y|^{N+s}}\,dxdy = P_s(\mathcal{O}),
		\end{split}
	\end{equation*}
	which entails the reverse inequality of \eqref{claim1-cap-per}, by the arbitrariness of $\mathcal{O}$. 
	\vskip.2cm \noindent
	To prove the last sentence of the statement, let $\Sigma \subseteq E$ be the closure of a bounded open convex set and assume that $0 \in {\rm int}(\Sigma)$. By \cite[Lemma B.2]{FFMMM}, for every $\mathcal{O}$ admissible we have  
	\begin{equation}\label{ok3}
		P_s(\Sigma) = P_s(\Sigma \cap \mathcal{O}) \le P_s(\mathcal{O}).
	\end{equation}
	On the other hand, for $\lambda > 0$ we set 
	\[
	\mathcal{O}_\lambda := (1 + \lambda)\,{\rm int}(\Sigma) = \Big\{(1+ \lambda)\,x\,:\, x \in {\rm int}(\Sigma) \Big\}. 
	\]
	By convexity $\Sigma \subseteq \mathcal{O}_\lambda$ and $\mathcal{O}_\lambda \Subset E$, for $\lambda$ small enough, thus $\mathcal{O}_\lambda$ is admissible. Moreover, by the scaling properties of the $s-$perimeter 
	\[
	P_s(\mathcal{O}_\lambda) = \left(1 + \lambda\right)^{N-s}\,P_s(\Sigma),
	\]
	which converges to $P_s(\Sigma)$, as $\lambda \to 0$. By recalling \eqref{ok3}, this entails that 
	\[
	P_s(\Sigma) = \inf\left\{P_s(\mathcal{O})\,:\, \mathcal{O} \mbox{ is admissible }\right\},
	\]
	and so the desired conclusion follows by the first part of the proof. 
\end{proof}
\newpage
For $p=2$, the following proposition is already contained in \cite[Proposition 2.6 \& Corollary 2.7]{AbFelNor}. 

\begin{proposition} \label{prop:cap-null}
	Let $1 \le p < \infty$,  $0 < s< 1$ and take an exponent $q$ satisfying \eqref{hp:subcritical}. Let $E\subseteq\mathbb{R}^N$ be an open and bounded set, and let $F \subseteq E$ be compact set such that $F \Subset B_R(x_0)$ and 
	\begin{equation} \label{cap-null}
		\widetilde{{\rm cap}}_{s,p}\left(F; B_R(x_0)\right) = 0, 
	\end{equation}
	for some $x_0 \in \mathbb{R}^N$ and $R > 0$. Then we have 
	\[
	\widetilde{W}^{s,p}_0(E \setminus F) = \widetilde{W}^{s,p}_0(E).
	\]
	In particular 
	\[
	\lambda^s_{p, q}(E) = \lambda^s_{p, q}(E \setminus F).
	\]
\end{proposition}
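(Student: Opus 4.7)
The plan is to prove the stronger statement that $\widetilde{W}^{s,p}_0(E\setminus F)=\widetilde{W}^{s,p}_0(E)$, from which the equality of frequencies follows at once via Remark \ref{rmk:relax}. One inclusion is trivial, since $E\setminus F$ is an open subset of $E$ (recall $F$ is compact, hence closed, and $E$ is open and bounded, so $E\setminus F$ is open and bounded), and therefore $C^\infty_0(E\setminus F)\subseteq C^\infty_0(E)$. The main work is the reverse inclusion. Since $C^\infty_0(E)$ is dense in $\widetilde{W}^{s,p}_0(E)$ and $\widetilde{W}^{s,p}_0(E\setminus F)$ is closed in $W^{s,p}(\mathbb{R}^N)$, it is enough to approximate any fixed $u\in C^\infty_0(E)$ in the $W^{s,p}(\mathbb{R}^N)$ norm by a sequence of functions belonging to $\widetilde{W}^{s,p}_0(E\setminus F)$.

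To produce these approximants, I invoke Proposition \ref{prop:equivalent-cap} together with \eqref{cap-null}: there exists a sequence $\varphi_k\in \mathrm{Lip}_0(B_R(x_0))$ with $0\le\varphi_k\le 1$, $\varphi_k\equiv 1$ on $F$, and $[\varphi_k]_{W^{s,p}(\mathbb{R}^N)}\to 0$. I then set $u_k:=u(1-\varphi_k)$. Since $u$ is smooth with compact support in $E$ and $\varphi_k$ is Lipschitz, $u_k$ lies in $C^{0,s}_0(\mathbb{R}^N)$ (actually in $\mathrm{Lip}_0(\mathbb{R}^N)$), its support is contained in $\overline{\mathrm{supp}(u)}\subset E$, and it vanishes on $F$ (because $\varphi_k\equiv 1$ on $F$). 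Hence $u_k=0$ on $\mathbb{R}^N\setminus(E\setminus F)$ and Lemma \ref{lm:brezis-type}, applied with the open bounded set $E\setminus F$, guarantees $u_k\in\widetilde{W}^{s,p}_0(E\setminus F)$.

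The remaining point is to show $u_k\to u$ in $W^{s,p}(\mathbb{R}^N)$, equivalently $u\varphi_k\to 0$ in $W^{s,p}(\mathbb{R}^N)$, and this is the principal technical step. For the $L^p$ norm I note that, since $\mathrm{supp}(\varphi_k)\subseteq B_R(x_0)$, a direct computation using only the long-range contribution yields
\[
[\varphi_k]^p_{W^{s,p}(\mathbb{R}^N)}\,\ge\, 2\iint_{B_R(x_0)\times(\mathbb{R}^N\setminus B_{2R}(x_0))}\frac{|\varphi_k(x)|^p}{|x-y|^{N+s\,p}}\,dx\,dy\,\ge\, c_{N,s,p}\,R^{-s\,p}\|\varphi_k\|_{L^p(\mathbb{R}^N)}^p,
\]
so $\|\varphi_k\|_{L^p(\mathbb{R}^N)}\to 0$ and consequently $\|u\varphi_k\|_{L^p(\mathbb{R}^N)}\le\|u\|_\infty\|\varphi_k\|_{L^p(\mathbb{R}^N)}\to 0$. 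For the seminorm, I apply the discrete Leibniz inequality
\[
|u\varphi_k(x)-u\varphi_k(y)|^p\le 2^{p-1}\bigl(\|u\|_{\infty}^{p}|\varphi_k(x)-\varphi_k(y)|^p+\varphi_k(y)^p|u(x)-u(y)|^p\bigr),
\]
which yields $[u\varphi_k]^p_{W^{s,p}(\mathbb{R}^N)}\le 2^{p-1}\|u\|_\infty^p[\varphi_k]^p_{W^{s,p}(\mathbb{R}^N)}+2^{p-1}I_k$, where
\[
I_k=\iint_{\mathbb{R}^N\times\mathbb{R}^N}\varphi_k(y)^p\,\frac{|u(x)-u(y)|^p}{|x-y|^{N+s\,p}}\,dxdy.
\]
The first term vanishes by assumption on $\varphi_k$. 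For $I_k$, the integrand is dominated by $|u(x)-u(y)|^p/|x-y|^{N+s\,p}\in L^1(\mathbb{R}^{2N})$ (as $u\in C^\infty_0\subseteq W^{s,p}(\mathbb{R}^N)$) and, extracting a subsequence along which $\varphi_k\to 0$ a.e. (possible by $L^p$ convergence), the Lebesgue Dominated Convergence Theorem forces $I_k\to 0$.

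Thus, up to a subsequence, $u_k\to u$ in $W^{s,p}(\mathbb{R}^N)$, giving $u\in\widetilde{W}^{s,p}_0(E\setminus F)$ and the desired equality of spaces. Finally, the same $L^p$-estimate on $\varphi_k$ combined with $\varphi_k\ge 1_F$ forces $|F|=0$, so that $L^q(E)=L^q(E\setminus F)$; the identity $\lambda^s_{p,q}(E)=\lambda^s_{p,q}(E\setminus F)$ is then immediate from Remark \ref{rmk:relax}. The main obstacle is the seminorm convergence of $u\varphi_k$ to zero, which rules out any naive Leibniz bound of the form $\|u\|_\infty[\varphi_k]_{W^{s,p}}+\|\varphi_k\|_\infty[u]_{W^{s,p}}$ and instead requires the mixed inequality above together with a dominated-convergence argument after extracting an a.e.\ convergent subsequence.
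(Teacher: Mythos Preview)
Your proof is correct and follows essentially the same strategy as the paper: multiply a fixed $u\in C^\infty_0(E)$ by $(1-\varphi_k)$ where the $\varphi_k$ are capacitary test functions for $F$, invoke Lemma~\ref{lm:brezis-type} to land in $\widetilde W^{s,p}_0(E\setminus F)$, and then show $u\varphi_k\to 0$ in $W^{s,p}(\mathbb R^N)$ via a Leibniz split. The only notable variations are that (i) you control $\|\varphi_k\|_{L^p}$ by a direct long-range tail estimate rather than by the Poincar\'e inequality $\lambda^s_p(B_R)>0$, and (ii) for the mixed term $I_k$ you pass to an a.e.\ convergent subsequence and apply dominated convergence, whereas the paper uses Lemma~\ref{leibniz-holder} to bound $\sup_y\int |u(x)-u(y)|^p/|x-y|^{N+sp}\,dx$ uniformly and then absorbs $\|\varphi_k\|_{L^p}$; both are minor technical choices and either works.
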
   

\begin{proof}
	Without loss of generality, we assume $x_0 = 0$. Since $C^\infty_0(E \setminus F) \subseteq C^\infty_0(E)$, we have 
	\begin{equation} \label{ok7}
		\lambda_{p,q}^s(E) \le \lambda_{p,q}^s(E \setminus F),
	\end{equation}
	by definition. Observe that $\lambda_{p, q}^s(E \setminus F) > 0$, as a consequence of \cite[Lemma 2.4]{BLP} and H\"older's inequality. Then, in order to prove the reverse inequality of \eqref{ok7}, given any $u \in \widetilde{W}_0^{s,p}(E)$ we need to find a sequence of functions in $\widetilde{W}^{s,p}_0(E \setminus F)$ converging to $u$ with respect $[\,\cdot\,]_{W^{s,p}(\mathbb{R}^N)}$. Let $(u_n)_{n \in \mathbb{N}} \subseteq C^\infty_0(E)$ be such that  
	\begin{equation} \label{inizio}
		\lim_{n \to \infty} \|u_n - u\|_{W^{s,p}(\mathbb{R}^N)} = 0. 
	\end{equation} 
	By Proposition \ref{prop:equivalent-cap}, there exists a sequence $(\varphi_m)_{m\in \mathbb{N}} \subseteq {\rm Lip}_0(B_R)$ satisfying
	\begin{equation} \label{prop-app-cap}
		0 \le \varphi_m \le 1\,\, \mbox{ on } \mathbb{R}^N, \qquad \varphi_m = 1\,\, \mbox{ on } F \qquad \mbox{ and } \qquad \lim_{m \to \infty} [\varphi_m]_{W^{s,p}(\mathbb{R}^N)} = 0, 
	\end{equation}  
	for every $m \in \mathbb{N}$. We set 
	\begin{equation*}
		v^{(n)}_m := \Big((1-\varphi_m)\,u_n\Big)_{+}, \qquad \mbox{ for every } m, n \in \mathbb{N}.
	\end{equation*}
	Since $F \Subset E$, we have  $\mathbb{R}^N \setminus \left(E \setminus F\right) = F \cup \left(\mathbb{R}^N \setminus E\right)$. By construction, $v^{(n)}_m \in W^{s,p}(\mathbb{R}^N)$ and $v^{(n)}_m = 0$ on $F \cup \left(\mathbb{R}^N \setminus E\right)$, thus by Lemma \ref{lm:brezis-type} we get that  
	$
	v^{(n)}_m \in \widetilde{W}^{s,p}_0(E \setminus F).
	$
	By Minkowski's inequality and Lemma \ref{leibniz-holder}, we obtain
	\begin{equation*} 
		\begin{split}
			&[v^{(n)}_m - u_n]_{W^{s,p}(\mathbb{R}^N)} \le [\varphi_m\,u_n]_{W^{s,p}(\mathbb{R}^N)} \\ &\le [\varphi_m]_{W^{s,p}(\mathbb{R}^N)}\,\|u_n\|_{L^\infty(\mathbb{R}^N)} + \left(\iint_{\mathbb{R}^N \times \mathbb{R}^N} |\varphi_m(y)|^p\,\frac{|u_n(x) - u_n(y)|^p}{|x-y|^{N+s\,p}}\,dxdy\right)^{\frac{1}{p}} \\
			&\le  [\varphi_m]_{W^{s,p}(\mathbb{R}^N)}\,\|u_n\|_{L^\infty(\mathbb{R}^N)} + \left(\frac{c_{N,p}}{s\,(1-s)}\right)^{\frac{1}{p}}\,\|\varphi_m\|_{L^p(\mathbb{R}^N)}\,\|\nabla u_n\|^s_{L^\infty(\mathbb{R}^N)}\,\|u_n\|^{1-s}_{L^\infty(\mathbb{R}^N)} \\
			&\le [\varphi_m]_{W^{s,p}(\mathbb{R}^N)}\,\left(\|u_n\|_{L^\infty(\mathbb{R}^N)} + \left(\frac{c_{N,p}}{\lambda_{p}^s(B_R)\,s\,(1-s)}\right)^{\frac{1}{p}}\,\|\nabla u_n\|^s_{L^\infty(\mathbb{R}^N)}\,\|u_n\|^{1-s}_{L^\infty(\mathbb{R}^N)}\right),
		\end{split}
	\end{equation*}
	where in the last line we also used Poincar\'e's inequality on balls. From \eqref{prop-app-cap}, we then obtain 
	\begin{equation*}  \label{seminorma-converge}
		\lim_{m \to \infty} [v^{(n)}_m - u_n]_{W^{s,p}(\mathbb{R}^N)} = 0,  \qquad \mbox{ for every } n\in \mathbb{N},
	\end{equation*}
	and so also
	$
	u_n \in \widetilde{W}^{s,p}_0(E \setminus F),
	$
	for every $n \in \mathbb{N}$.
	By \eqref{inizio}, we infer that $u \in \widetilde{W}^{s,p}_0(E \setminus F)$, as wanted. 
\end{proof}

In the next simple proposition, we derive basic estimates between the fractional capacity and its local counterpart/Lebesgue measure. 
\begin{proposition}   \label{prop:cap-s-1}
	Let $1 \le p < \infty$, $0 < s< 1$ and let $E\subseteq\mathbb{R}^N$ be an open set. For every compact set $\Sigma\subseteq E$, we have  
	\begin{equation} \label{cap-cap}
		\widetilde{\operatorname{cap}}_{s,p}(\Sigma;E)\leq \frac{c_{N,p}}{s\,(1-s)} \lambda_{p}(E)^{s-1}\,\operatorname{cap}_{p}(\Sigma;E),
	\end{equation}
	and 
	\begin{equation} \label{cap-vol}
		|\Sigma|\,\lambda_{p}^s(E) \le \widetilde{\operatorname{cap}}_{s,p}(\Sigma; E).
	\end{equation}
\end{proposition}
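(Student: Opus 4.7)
\textit{Proof proposal.} The inequality \eqref{cap-vol} is the straightforward part. Given any admissible competitor $\varphi \in C^\infty_0(E)$ for $\widetilde{\operatorname{cap}}_{s,p}(\Sigma;E)$, so that $\varphi \ge 1$ on $\Sigma$, one simply notes $\int_E |\varphi|^p\,dx \ge \int_\Sigma |\varphi|^p\,dx \ge |\Sigma|$. Combined with the definition of $\lambda^s_p(E)$ applied to the test function $\varphi$, this gives $[\varphi]^p_{W^{s,p}(\mathbb{R}^N)} \ge \lambda^s_p(E)\,|\Sigma|$. Passing to the infimum over $\varphi$ and invoking Remark \ref{rmk:defi-cap} (the infimum can be taken over nonnegative competitors if needed) yields \eqref{cap-vol}.

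For \eqref{cap-cap}, the plan is to take an arbitrary admissible $\varphi \in C^\infty_0(E)$ for $\operatorname{cap}_p(\Sigma;E)$ (which is simultaneously admissible for $\widetilde{\operatorname{cap}}_{s,p}(\Sigma;E)$) and to estimate $[\varphi]^p_{W^{s,p}(\mathbb{R}^N)}$ in terms of $\|\nabla\varphi\|^p_{L^p(E)}$ through a classical localization-at-scale-$\delta$ splitting of the Gagliardo integral:
\[
[\varphi]^p_{W^{s,p}(\mathbb{R}^N)} = \iint_{|x-y|<\delta}\frac{|\varphi(x)-\varphi(y)|^p}{|x-y|^{N+s\,p}}\,dxdy + \iint_{|x-y|\geq\delta}\frac{|\varphi(x)-\varphi(y)|^p}{|x-y|^{N+s\,p}}\,dxdy.
\]
On $\{|x-y|<\delta\}$, I would write $\varphi(x)-\varphi(y)=\int_0^1\nabla\varphi(y+t(x-y))\cdot(x-y)\,dt$, apply Jensen in the $t$-variable and Fubini in $(x,y)$ after the change $h=x-y$, yielding the bound $\frac{N\omega_N}{p(1-s)}\,\delta^{p(1-s)}\|\nabla\varphi\|_{L^p(E)}^p$. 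On $\{|x-y|\geq\delta\}$, I would use $|\varphi(x)-\varphi(y)|^p\le 2^{p-1}(|\varphi(x)|^p+|\varphi(y)|^p)$ together with $\int_{|h|\geq\delta}|h|^{-N-s\,p}\,dh=\frac{N\omega_N}{s\,p}\,\delta^{-s\,p}$, obtaining the bound $\frac{2^p\,N\omega_N}{s\,p}\,\delta^{-s\,p}\|\varphi\|_{L^p(E)}^p$.

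At this point I choose $\delta=\lambda_p(E)^{-1/p}$ and invoke the classical Poincar\'e inequality $\lambda_p(E)\,\|\varphi\|_{L^p(E)}^p\le\|\nabla\varphi\|_{L^p(E)}^p$. Both terms become comparable to $\lambda_p(E)^{s-1}\|\nabla\varphi\|_{L^p(E)}^p$, and using $\frac{1}{1-s}+\frac{2^p}{s}\le\frac{2^p}{s\,(1-s)}$ collapses the prefactor into $\frac{c_{N,p}}{s\,(1-s)}$ with $c_{N,p}=\frac{2^p\,N\omega_N}{p}$, matching Lemma \ref{leibniz-holder}. Taking the infimum over $\varphi$ delivers \eqref{cap-cap}. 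The main delicate point I anticipate is bookkeeping the two partial bounds so that the final $s$-dependence is exactly $\frac{1}{s\,(1-s)}$ and the constant reduces to $c_{N,p}$; any suboptimal choice of the scale $\delta$ or a cruder bound on $\{|x-y|\geq\delta\}$ would spoil either the $\lambda_p(E)^{s-1}$ factor or the asymptotics as $s\searrow 0$ or $s\nearrow 1$.
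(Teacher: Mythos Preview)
Your proof is correct and follows essentially the same strategy as the paper. For \eqref{cap-cap}, the paper simply invokes the known multiplicative interpolation inequality $[\varphi]^p_{W^{s,p}(\mathbb{R}^N)} \le \frac{c_{N,p}}{s(1-s)}\|\varphi\|_{L^p}^{(1-s)p}\|\nabla\varphi\|_{L^p}^{sp}$ from \cite{BPS,BLP} and then applies Poincar\'e, whereas your scale-$\delta$ splitting with the specific choice $\delta = \lambda_p(E)^{-1/p}$ is exactly the standard proof of that interpolation inequality carried out in one pass (the only omission is the trivial case $\lambda_p(E)=0$, where the right-hand side of \eqref{cap-cap} is $+\infty$).
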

\begin{proof}
	We assume $\lambda_{p}(E) > 0$, otherwise there is nothing to prove. Let $\varphi\in C^{\infty}_0(E)$ be such that $\varphi\geq1$ on $\Sigma.$ 
	\par 
	We first prove \eqref{cap-cap}. By using \cite[Corollary 2.2]{BPS} and \cite[Proposition 4.2]{BLP}, for the case $p>1$ and for the case $p=1$ respectively, we get  
	\[
	\widetilde{\mathrm{cap}}_{s,p}(\Sigma; E) \le [\varphi]_{W^{s,p}(\mathbb{R}^N)}^p\leq\frac{c_{N,p}}{s\,(1-s)}\|\varphi\|^{(1-s)\,p}_{L^p(\mathbb{R}^N)}\|\nabla\varphi\|^{s\,p}_{L^p(\mathbb{R}^N)} \le \frac{c_{N,p}}{s\,(1-s)} \lambda_{p}(E)^{s-1} \|\nabla \varphi\|^p_{L^p(E)}.
	\]
	By the arbitrariness of $\varphi$, we obtain the conclusion.
	For \eqref{cap-vol},  we have 
	\[
	\lambda_{p}^s(E)\,|\Sigma|\leq \lambda_{p}^s(E)\, \|\varphi\|^p_{L^p(E)} \le [\varphi]_{W^{s,p}(\mathbb{R}^N)}^p,
	\]
	and we conclude, as before.
\end{proof}
\begin{remark} \label{rmk:asym-cap}
	Let $1 \le p < \infty$ and let $E \subseteq \Omega$ be an open set. From the previous result and by Propositions \ref{prop:asym-s-1}-\ref{prop:asym2}, we can infer that
	\begin{equation} \label{eqn:asym-sharp1}
		\widetilde{\operatorname{cap}}_{s,p}\left(\Sigma; E\right) \sim \frac{1}{s},\,\, \mbox{ for } s \searrow 0,  \qquad	\widetilde{\operatorname{cap}}_{s,p}\left(\Sigma; E\right) \sim \frac{1}{1-s}, \,\, \mbox{ for } s \nearrow 1,
	\end{equation}
	for every $\Sigma \subseteq E$ compact set. 
\end{remark}
A first comparison result between local and nonlocal variant of capacitary inradius is derived below.
\begin{proposition} \label{prop:capin-vs-capin}
	Let $1 \le p < \infty$ and $0 < s< 1$. There exists a constant $\beta = \beta(N,p,s) > 0$ such that for every $\Omega\subseteq\mathbb{R}^N$ open set we have 
	\[
	R_{p, \beta \cdot \gamma}(\Omega)\leq R^s_{p, \gamma}(\Omega), \qquad \mbox{ for every }  0<\gamma<\min\left\{1,\,\, \frac{1}{\beta}\right\}.
	\]
\end{proposition}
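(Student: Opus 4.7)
The plan is to show that any ball $B_r(x_0)$ whose complement deficit $\overline{B_r(x_0)} \setminus \Omega$ is $(p, \beta\gamma)$-negligible (in the local sense) is automatically $(s,p,\gamma)$-negligible (in the fractional sense) for an appropriate $\beta = \beta(N,p,s) > 0$. Once this implication is established, the class of admissible radii in the definition of $R_{p, \beta\gamma}(\Omega)$ is contained in the class defining $R^s_{p, \gamma}(\Omega)$, and taking suprema yields the desired inequality.

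First I would fix $B_r(x_0)$ with $\overline{B_r(x_0)} \setminus \Omega$ being $(p, \beta\gamma)$-negligible, i.e.
\[
\operatorname{cap}_p\bigl(\overline{B_r(x_0)} \setminus \Omega; B_{2r}(x_0)\bigr) \le \beta\gamma\,\operatorname{cap}_p\bigl(\overline{B_r(x_0)}; B_{2r}(x_0)\bigr).
\]
Then I would apply the estimate \eqref{cap-cap} from Proposition \ref{prop:cap-s-1} with $\Sigma = \overline{B_r(x_0)} \setminus \Omega$ and $E = B_{2r}(x_0)$ to bound the fractional capacity from above by a multiple of the local $p$-capacity, noting that $\lambda_p(B_{2r}(x_0)) = \lambda_p(B_1)/(2r)^p$ by scaling. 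Combining this with the assumption above yields a bound of the form
\[
\widetilde{\operatorname{cap}}_{s,p}\bigl(\overline{B_r(x_0)} \setminus \Omega; B_{2r}(x_0)\bigr) \le \frac{c_{N,p}\,2^{p(1-s)}}{s(1-s)}\,\lambda_p(B_1)^{s-1}\,\beta\gamma\, r^{p(1-s)}\operatorname{cap}_p\bigl(\overline{B_r(x_0)}; B_{2r}(x_0)\bigr).
\]

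Next, I would normalize the right-hand side by the benchmark capacity $\widetilde{\operatorname{cap}}_{s,p}(\overline{B_r(x_0)}; B_{2r}(x_0))$. The local and fractional scaling identities (cf.\ Remark \ref{riscalamento}) give
\[
\operatorname{cap}_p\bigl(\overline{B_r(x_0)}; B_{2r}(x_0)\bigr) = r^{N-p}\operatorname{cap}_p\bigl(\overline{B_1}; B_2\bigr), \qquad \widetilde{\operatorname{cap}}_{s,p}\bigl(\overline{B_r(x_0)}; B_{2r}(x_0)\bigr) = r^{N-sp}\widetilde{\operatorname{cap}}_{s,p}\bigl(\overline{B_1}; B_2\bigr),
\]
and the exponents of $r$ cancel exactly: $p(1-s) + (N-p) - (N-sp) = 0$. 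Thus the ratio reduces to an explicit constant $C(N,p,s)$ times $\beta\gamma$, where
\[
C(N,p,s) := \frac{c_{N,p}\,2^{p(1-s)}}{s(1-s)}\,\lambda_p(B_1)^{s-1}\,\frac{\operatorname{cap}_p(\overline{B_1}; B_2)}{\widetilde{\operatorname{cap}}_{s,p}(\overline{B_1}; B_2)}.
\]

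I would then set $\beta(N,p,s) := 1/C(N,p,s)$, which is a positive finite quantity since $\widetilde{\operatorname{cap}}_{s,p}(\overline{B_1}; B_2) > 0$ (as $\overline{B_1}$ has positive measure, by \eqref{cap-vol}). With this choice the above computation gives
\[
\widetilde{\operatorname{cap}}_{s,p}\bigl(\overline{B_r(x_0)} \setminus \Omega; B_{2r}(x_0)\bigr) \le \gamma\,\widetilde{\operatorname{cap}}_{s,p}\bigl(\overline{B_r(x_0)}; B_{2r}(x_0)\bigr),
\]
so $\overline{B_r(x_0)} \setminus \Omega$ is $(s,p,\gamma)$-negligible, as required. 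The only mildly delicate point is ensuring $\beta\gamma < 1$ so that the local definition is meaningful, which is precisely guaranteed by the hypothesis $0 < \gamma < \min\{1, 1/\beta\}$; otherwise the argument is just a clean combination of Proposition \ref{prop:cap-s-1} with the two scaling identities, and I do not expect any genuine obstacle.
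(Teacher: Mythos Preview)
Your proof is correct and follows essentially the same approach as the paper's: both rely on the estimate \eqref{cap-cap} from Proposition \ref{prop:cap-s-1} together with the scaling identities for the local and fractional capacities (Remark \ref{riscalamento}), and the resulting constant $\beta$ is the same up to notation. The only cosmetic difference is the direction of the argument: you show directly that every $(p,\beta\gamma)$-negligible ball is $(s,p,\gamma)$-negligible and then compare the suprema, whereas the paper takes $r > R^s_{p,\gamma}(\Omega)$ and deduces $r > R_{p,\beta\gamma}(\Omega)$; these are contrapositives of each other.
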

\begin{proof}
	Let $r>R^s_{p, \gamma}(\Omega)$ then, by using Proposition \ref{prop:cap-s-1} and Remark \ref{riscalamento}, for every $x_0 \in \mathbb{R}^N$ we have 
	\[
	\begin{split}
		\mathrm{cap}_p\left(\overline{B_r(x_0)}\setminus\Omega; B_{2r}(x_0)\right) &\ge c_{N,p}\, s\,(1-s)\,\lambda_{p}\left(B_{2r}\right)^{1-s}\,\widetilde{\operatorname{cap}}_{s,p}\left(\overline{B_r(x_0)}\setminus\Omega; B_{2r}(x_0)\right) \\
		&>  c_{N,p}\, s\,(1-s)\, \lambda_{p}(B_{2})^{1-s}\,\gamma\,\widetilde{\operatorname{cap}}_{s,p}\left(\overline{B_r(x_0)};B_{2r}(x_0)\right) \\
		&= \beta\,\gamma\,\mathrm{cap}_p\left(\overline{B_r(x_0)}; B_{2r}(x_0)\right),
	\end{split}
	\]
	where we set 
	\[
	\beta = \beta(N,p, s) =\frac{ c_{N,p}\,\lambda_{p}(B_{2})^{1-s}}{\mathrm{cap}_p(\overline{B_1}; B_2)}\,s\,(1-s)\, \widetilde{\operatorname{cap}}_{s,p}(\overline{B_1};B_{2}).
	\]
	This entails that $r > R_{p, \beta\cdot \gamma}(\Omega),$ for every $\gamma < \min\left\{1,\,\, 1/\beta\right\},$ so we get the desired inequality, by the arbitrariness of $r$.
\end{proof}

The following technical result will be useful in the sequel. 
\begin{proposition}[Capacity with respect to balls] \label{prop:cap-wrt-balls}
	Let $1 \le p < \infty$ and $0 < s < 1$. Let $\Sigma \subseteq B_r(x_0)$ be a compact set. For every $R > r$, there exists a constant $\mathscr{C} = \mathscr{C}(N, p, s, R/d) > 0$ such that  
	\begin{equation} \label{capacity-wrt-balls}
		\widetilde{\mathrm{cap}}_{s,p}\left(\Sigma; B_R(x_0)\right) \leq \widetilde{\mathrm{cap}}_{s,p}\left(\Sigma; B_r(x_0)\right) \leq \mathscr{C}\,\widetilde{\mathrm{cap}}_{s,p}\left(\Sigma; B_R(x_0)\right),
	\end{equation}
	where $d := \mathrm{dist}(\Sigma; \partial B_r(x_0)) > 0$. Moreover, the function $\tau \mapsto \mathscr{C}(N, p, s, \tau),$ as $\tau > 0$, is increasing. 
\end{proposition}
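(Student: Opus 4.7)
The plan is to treat the two inequalities separately. The leftmost inequality $\widetilde{\mathrm{cap}}_{s,p}(\Sigma; B_R(x_0)) \le \widetilde{\mathrm{cap}}_{s,p}(\Sigma; B_r(x_0))$ is a direct consequence of the monotonicity of $\widetilde{\mathrm{cap}}_{s,p}$ with respect to the container: every $\varphi \in C^\infty_0(B_r(x_0))$ admissible for the smaller capacity can be viewed, after extension by zero, as an admissible competitor for the larger one, since $B_r(x_0)\subseteq B_R(x_0)$.

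For the nontrivial inequality, the idea is to start from a near-optimal test function for $\widetilde{\mathrm{cap}}_{s,p}(\Sigma; B_R(x_0))$ and localize it inside $B_r(x_0)$ via multiplication by a Lipschitz cutoff. Concretely, given $\varepsilon > 0$, by Proposition \ref{prop:equivalent-cap} I would fix $\varphi \in {\rm Lip}_0(B_R(x_0))$ with $0 \le \varphi \le 1$, $\varphi = 1$ on $\Sigma$, and $[\varphi]_{W^{s,p}(\mathbb{R}^N)}^p \le \widetilde{\mathrm{cap}}_{s,p}(\Sigma; B_R(x_0)) + \varepsilon$. Using that $d>0$, I construct a Lipschitz cutoff $\eta : \mathbb{R}^N \to [0,1]$ with $\eta \equiv 1$ on $\{\mathrm{dist}(\cdot; \Sigma) \le d/3\}$, $\eta \equiv 0$ on $\{\mathrm{dist}(\cdot; \Sigma) \ge d/2\}$ and Lipschitz constant bounded by $6/d$. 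A short geometric check shows that the support of $\eta$ is a compact subset of $B_r(x_0)$, hence $\psi := \eta\,\varphi$ belongs to ${\rm Lip}_0(B_r(x_0))$, satisfies $0 \le \psi \le 1$, and equals $1$ on $\Sigma$. By Proposition \ref{prop:equivalent-cap} applied on $B_r(x_0)$ one then gets $\widetilde{\mathrm{cap}}_{s,p}(\Sigma; B_r(x_0)) \le [\psi]_{W^{s,p}(\mathbb{R}^N)}^p$.

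The quantitative heart of the argument is the estimate of $[\psi]_{W^{s,p}(\mathbb{R}^N)}^p$. Writing $\eta(x)\varphi(x) - \eta(y)\varphi(y) = \eta(x)(\varphi(x)-\varphi(y)) + \varphi(y)(\eta(x) - \eta(y))$, raising to the $p$-th power and using $0 \le \eta \le 1$ gives
\[
[\psi]_{W^{s,p}(\mathbb{R}^N)}^p \le 2^{p-1}\,[\varphi]_{W^{s,p}(\mathbb{R}^N)}^p + 2^{p-1}\int_{\mathbb{R}^N}\varphi(y)^p\left(\int_{\mathbb{R}^N}\frac{|\eta(x)-\eta(y)|^p}{|x-y|^{N+sp}}\,dx\right)dy.
\]
Lemma \ref{leibniz-holder}, applied to $\eta$ with $\|\eta\|_\infty \le 1$ and Lipschitz constant $\le 6/d$, bounds the inner integral by $\tfrac{c_{N,p}}{s(1-s)}(6/d)^{sp}$ uniformly in $y$. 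The remaining $\|\varphi\|_{L^p(\mathbb{R}^N)}^p$ is reabsorbed via the Poincar\'e inequality on $B_R(x_0)$ together with the scaling \eqref{scaling-frequency}, which yields $\|\varphi\|_{L^p(\mathbb{R}^N)}^p \le R^{sp}\lambda_p^s(B_1)^{-1}[\varphi]_{W^{s,p}(\mathbb{R}^N)}^p$. Assembling these bounds produces $[\psi]_{W^{s,p}(\mathbb{R}^N)}^p \le \mathscr{C}(N,p,s,R/d)\,[\varphi]_{W^{s,p}(\mathbb{R}^N)}^p$ with
\[
\mathscr{C}(N,p,s,\tau) = 2^{p-1}\left(1 + \frac{c_{N,p}\,6^{sp}}{s(1-s)\,\lambda_p^s(B_1)}\,\tau^{sp}\right),
\]
a function clearly increasing in $\tau$. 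Letting $\varepsilon \to 0$ closes the proof.

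I expect the main obstacle to be this reabsorption step: the Leibniz splitting produces a cross term involving the full $L^p$-norm of $\varphi$, which is not a priori controlled by $[\varphi]_{W^{s,p}}$; the Poincar\'e inequality on the larger ball $B_R(x_0)$, together with the explicit scaling of $\lambda_p^s$, is precisely what introduces the ratio $R/d$ into the constant and makes the dependence monotone.
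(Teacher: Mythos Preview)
Your proof is correct and follows the same overall strategy as the paper: multiply a near-optimal competitor for $\widetilde{\mathrm{cap}}_{s,p}(\Sigma;B_R(x_0))$ by a Lipschitz cutoff, use a Leibniz-type splitting together with Lemma~\ref{leibniz-holder}, and reabsorb the resulting $L^p$-norm via the Poincar\'e inequality on $B_R(x_0)$.

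There are two minor differences worth noting. First, the paper uses a \emph{radial} cutoff (identically $1$ on $B_{r-d}(x_0)$, vanishing outside $B_{r-d/2}(x_0)$), whereas you build $\eta$ from $\mathrm{dist}(\cdot;\Sigma)$; both choices work equally well here. Second, and more substantively, the paper first decomposes $[\psi]_{W^{s,p}(\mathbb{R}^N)}$ into a ``near'' piece over $B_r\times B_r$ and a ``far'' piece over $B_r\times(\mathbb{R}^N\setminus B_r)$, exploiting the separation between $\mathrm{supp}\,\psi$ and $\mathbb{R}^N\setminus B_r$ to control the latter; this produces an explicit constant of the form~\eqref{costante-cap-wrt-balls} containing an additional term of order $(R/d)^{(N+sp)/p}$. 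Your single global Leibniz split is more economical and yields the cleaner constant $\mathscr{C}(N,p,s,\tau)=2^{p-1}\bigl(1+\tfrac{c_{N,p}\,6^{sp}}{s(1-s)\,\lambda_p^s(B_1)}\tau^{sp}\bigr)$, which still has the correct asymptotics in $s$ (via Propositions~\ref{prop:asym-s-1} and~\ref{prop:asym2}) and is manifestly increasing in $\tau$.
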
 
\begin{proof}
	The leftmost inequality is trivial. Without loss of generality  we can assume  that $x_0 = 0$. Let $u \in C^\infty_0(B_R)$ be such that $u \geq 1$ on $\Sigma$. We fix $\delta = d/2$ and define a cut-off function $\eta \in \mathrm{Lip}_0(B_{r})$ given by \[
	\eta(x):=\min\left\{\left(\dfrac{(r- \delta) - |x|}{(r-\delta) - (r-d)}\right)_{+}, 1\right\}.
	\]
	In particular 
	\begin{equation} \label{prop-cutoff-cap}
		\eta \equiv 1 \mbox{ on } B_{r-d}, \qquad \eta \equiv 0 \mbox{ on } \mathbb{R}^N \setminus B_{r-\delta}, \qquad \|\nabla \eta\|_{L^\infty(\mathbb{R}^N)} = \dfrac{1}{d -\delta}.
	\end{equation}
	By construction, the function $\psi := \eta\,u$ is admissible to test the definition of relative $p-$capacity. Then, by Minkowski's inequality we have 
	\begin{align*} \nonumber
		\Big(\widetilde{\mathrm{cap}}_{s,p}(\Sigma; B_r)\Big)^{\frac{1}{p}} &\leq \left(\int_{B_r}\int_{B_r} \dfrac{|\psi(x) - \psi(y)|^p}{|x-y|^{N+sp}}dxdy\right)^{\frac{1}{p}} \\&\qquad + 2^{\frac{1}{p}}\left(\int_{B_r} |\psi(x)|^p\left(\int_{\mathbb{R}^N \setminus B_r} \dfrac{dy}{|x-y|^{N+sp}}\right) dx\right)^{\frac{1}{p}} \\
		&:= \mathcal{A}_1 + \mathcal{A}_2.
	\end{align*} 
 We estimate the last two terms separately. For $\mathcal{A}_1$, we add and subtract $\eta(x)u(y)$ and use Minkowski's inequality obtaining
	\begin{align*}
		\mathcal{A}_1 &\leq \left(\int_{B_r} |\eta(x)|^p \int_{B_r}\dfrac{|u(x) - u(y)|^p}{|x-y|^{N+s\,p}} dxdy\right)^{\frac{1}{p}} + \left(\int_{B_r} |u(y)|^p \int_{B_r} \dfrac{|\eta(x) - \eta(y)|^p}{|x-y|^{N+s\,p}} dx dy\right)^{\frac{1}{p}} \\ &\leq
		[u]_{W^{s,p}(B_R)} + \left(\dfrac{c_{N,p}}{s\,(1-s)}\right)^{\frac{1}{p}}\,\|\nabla \eta\|^s_{L^\infty(B_r)}\,\|\eta\|^{1-s}_{L^\infty(B_r)}\,\|u\|_{L^p(B_r)},
	\end{align*}
	where in the last line we also used \eqref{prop-cutoff-cap} and Lemma \ref{leibniz-holder}. In turn, by using the definitions of $\lambda_{p}^s(B_R)$ and $\delta$, we get \begin{align*} 
		\mathcal{A}_1 \le \left(1 + \left(\dfrac{c_{N,p}}{s\,(1-s)}\right)^{\frac{1}{p}} \left(\frac{2\,R}{d}\right)^s \left(\frac{1}{\lambda^s_{p}(B_1)}\right)^{\frac{1}{p}}\right)[u]_{W^{s,p}(\mathbb{R}^N)}. 
	\end{align*}
	We focus now on $\mathcal{A}_2$. By recalling \eqref{prop-cutoff-cap}, we have 
	\begin{align*} 
		\mathcal{A}_2 \le 2^{\frac{1}{p}} \left(\int_{B_{r - \delta}} |u(x)|^p \left(\int_{\mathbb{R}^N \setminus B_r} \dfrac{dy}{|x-y|^{N+s\,p}}\right)\,dx \right)^{\frac{1}{p}} \le  \left(\dfrac{N \omega_N}{s\,p}\right)^{\frac{1}{p}}\,R^{\frac{N}{p}} \left(\dfrac{1}{\delta}\right)^{\frac{N+s\,p}{p}} \,\|u\|_{L^p(B_R)}.
	\end{align*}
	Observe that, in the last inequality we used the following estimate
	\[
	\begin{split}
		\int_{\mathbb{R}^N \setminus B_r} \dfrac{dy}{|x-y|^{N+s\,p}} \leq \left(\dfrac{r}{\delta}\right)^{N+s\,p} \int_{\mathbb{R}^N \setminus B_r} \dfrac{dy}{|y|^{N+s\,p}} = \frac{N\,\omega_N}{s\,p}\,\left(\dfrac{r}{\delta}\right)^{N+ s\,p}\,\dfrac{1}{r^{s\,p}},
	\end{split}
	\]
	which holds since we have
	\[
	|x - y| \geq |y| - |x| \geq |y| - (r-\delta) \geq |y| - \dfrac{r-\delta}{r} |y| = |y| \dfrac{\delta}{r}, \qquad \mbox{ for } x \in B_{r-\delta}, \, y \notin B_{r}.
	\]
	Then, by using the $(s,p)-$Poincar\'e inequality on $B_R$ and by recalling the definition of $\delta$, we get 
	\begin{equation*} 
		\mathcal{A}_2 \le \left(\dfrac{N \omega_N}{s\,p} \frac{1}{\lambda^s_{p}(B_1)}\right)^{\frac{1}{p}} \, \left(\dfrac{2\,R}{d}\right)^{\frac{N+s\,p}{p}} \,[u]_{W^{s,p}(\mathbb{R}^N)}.
	\end{equation*}
	By collecting the previous estimates, we can infer the second inequality in \eqref{capacity-wrt-balls} by taking 
	\begin{equation} \label{costante-cap-wrt-balls}
		\begin{split}
			&\mathscr{C}\left(N, p, s, \frac{R}{d}\right) = \\ 
			&\left(1 +  \left(\dfrac{c_{N,p}}{s(1-s)} \frac{1}{\lambda^s_{p}(B_1)}\right)^{\frac{1}{p}}\left(\frac{2R}{d}\right)^s +  (1-s)^{\frac{1}{p}}\left(\dfrac{N \omega_N}{s(1-s)} \frac{1}{\lambda^s_{p}(B_1)}\right)^{\frac{1}{p}} \left(\dfrac{2R}{d}\right)^{\frac{N+s\,p}{p}}\right)^p,
		\end{split}
	\end{equation}
the proof is thereby complete. 
\end{proof}

\section{Poincar\'e-type estimates on balls}
\label{sec:3}

In the proof of Theorem \ref{thm:upper-bound}, we will crucially exploit two $L^1-W^{s,p}$ Poincar\'e-type estimates on balls, derived in Lemma \ref{poincarè-Palle} and Lemma \ref{lm:bound-l-infinito}  below. We provide such estimates with the correct asymptotic behaviours with respect to the parameter of fractional differentiability. We treat separately the cases $p= 1$ and $1 < p < \infty$, starting from the first one.   

\subsection{A Cheeger-type constant}
Let  $\Omega \subseteq \mathbb{R}^N$ be an open set, we introduce the following {\it Cheeger-type problem}
\[
h_s\left(E; \Omega\right) := \inf \left\{\dfrac{P_s(\mathcal{O})}{\mid \mathcal{O} \cap E|}: \begin{array}{ll}
	\mathcal{O} \Subset \Omega \mbox{ open with } \\ \mbox{ smooth boundary }
\end{array}  \right\},
\]
for every $E \subseteq \Omega$. Observe that, by taking $E = \Omega$, we have $h_s(\Omega; \Omega) = h_s(\Omega)$. 
\begin{proposition}[Poincar\'e vs Cheeger] \label{Cheeger--type constant}
	Let  $\Omega \subseteq \mathbb{R}^N$ be an open set. For every $E \subseteq \Omega$, we have
	\begin{equation*}
		\inf_{\varphi \in C^\infty_0(\Omega)}\left\{\iint_{\mathbb{R}^N \times \mathbb{R}^N} \frac{|\varphi(x) - \varphi(y)|}{|x-y|^{N + s}}\,dxdy : \int_{E} |\varphi|\,dx = 1\right\} = h_s\left(E; \Omega\right).
	\end{equation*}
\end{proposition}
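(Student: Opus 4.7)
The plan is to establish the equivalence via a two-sided inequality, and the key tool is the nonlocal Coarea-type Formula for the $W^{s,1}$ seminorm (already invoked in the proof of Proposition \ref{prop:cap-per}) together with a mollification-based approximation of indicator functions. Denote by $L$ the infimum on the left-hand side, and observe that we may always restrict the class of competitors in the $h_s(E;\Omega)$ definition to sets $\mathcal{O}$ with $|\mathcal{O}\cap E|>0$, since the ratio is otherwise $+\infty$.

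For the inequality $L\le h_s(E;\Omega)$, I would proceed as in the second part of the proof of Proposition \ref{prop:cap-per}. Given any admissible competitor $\mathcal{O}\Subset \Omega$ with smooth boundary and $|\mathcal{O}\cap E|>0$, I take a family of standard mollifiers $(\rho_\varepsilon)_{\varepsilon>0}$ and set $\varphi_\varepsilon:= 1_{\mathcal{O}}\ast \rho_\varepsilon$. For $\varepsilon$ small enough we have $\varphi_\varepsilon\in C^\infty_0(\Omega)$, and by the Fubini/convolution computation already carried out in Proposition \ref{prop:cap-per} one gets
\[
[\varphi_\varepsilon]_{W^{s,1}(\mathbb{R}^N)} \le \|\rho_\varepsilon\|_{L^1(\mathbb{R}^N)}\,[1_{\mathcal{O}}]_{W^{s,1}(\mathbb{R}^N)} = P_s(\mathcal{O}).
\]
Since $\varphi_\varepsilon\to 1_{\mathcal{O}}$ in $L^1(\mathbb{R}^N)$, we have $\int_E |\varphi_\varepsilon|\,dx\to |E\cap \mathcal{O}|>0$. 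Normalizing $\psi_\varepsilon:=\varphi_\varepsilon/\int_E |\varphi_\varepsilon|\,dx \in C^\infty_0(\Omega)$ yields an admissible competitor for $L$, and passing to the limit $\varepsilon\to 0$ gives $L\le P_s(\mathcal{O})/|E\cap\mathcal{O}|$. Taking the infimum over admissible $\mathcal{O}$ produces $L\le h_s(E;\Omega)$.

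For the reverse inequality $L\ge h_s(E;\Omega)$, I take any $\varphi\in C^\infty_0(\Omega)$ with $\int_E |\varphi|\,dx = 1$. The pointwise triangle inequality $\bigl||a|-|b|\bigr|\le |a-b|$ gives $[|\varphi|]_{W^{s,1}(\mathbb{R}^N)}\le [\varphi]_{W^{s,1}(\mathbb{R}^N)}$, so I can work with $|\varphi|$. Then the nonlocal Coarea Formula of Visintin (already used in Proposition \ref{prop:cap-per}) yields
\[
[\varphi]_{W^{s,1}(\mathbb{R}^N)} \ge [|\varphi|]_{W^{s,1}(\mathbb{R}^N)} = \int_0^{\infty} P_s\bigl(\{|\varphi|>t\}\bigr)\,dt.
\]
For a.e. $t>0$ the superlevel set $\mathcal{O}_t:=\{|\varphi|>t\}$ has smooth boundary by Sard's Lemma applied to $\varphi$ (split into $\{\varphi>t\}\cup\{\varphi<-t\}$, both smooth smooth sublevel sets of a smooth function away from its zero set), and $\mathcal{O}_t\Subset\Omega$ because $\varphi\in C^\infty_0(\Omega)$. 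Hence each such $\mathcal{O}_t$ is admissible in the definition of $h_s(E;\Omega)$, so
\[
P_s(\mathcal{O}_t)\ge h_s(E;\Omega)\,|\mathcal{O}_t\cap E|.
\]
Combining this with the coarea identity and Cavalieri's principle $\int_0^\infty |\mathcal{O}_t\cap E|\,dt = \int_E |\varphi|\,dx = 1$, I conclude
\[
[\varphi]_{W^{s,1}(\mathbb{R}^N)}\ge h_s(E;\Omega)\int_0^\infty |\mathcal{O}_t\cap E|\,dt = h_s(E;\Omega),
\]
and the arbitrariness of $\varphi$ yields $L\ge h_s(E;\Omega)$. The only delicate point is verifying that for a.e. $t>0$ the superlevel sets $\mathcal{O}_t$ are admissible competitors in the definition of $h_s(E;\Omega)$, which is the standard Sard-type argument already exploited in the proof of Proposition \ref{prop:cap-per}; everything else is direct.
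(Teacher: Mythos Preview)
Your proof is correct and follows essentially the same route as the paper: mollification of indicator functions for the inequality $L\le h_s(E;\Omega)$, and the nonlocal Coarea Formula combined with Sard's Lemma and Cavalieri's principle for the reverse inequality. The only cosmetic difference is that the paper reduces at the outset to nonnegative competitors $\varphi$ (noting ``this is not restrictive''), whereas you make this reduction explicit via the triangle inequality and work with $|\varphi|$; your splitting of $\{|\varphi|>t\}$ into the two smooth superlevel sets of $\varphi$ and $-\varphi$ is a clean way to justify the Sard step.
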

\begin{proof}
	Let $\mathcal{O} \Subset \Omega$ be an open set with smooth boundary. We set $\varphi_{\varepsilon} = 1_{\mathcal{O}} \ast \rho_\varepsilon/\|1_{\mathcal{O}} \ast \rho_{\varepsilon}\|_{L^1(E)},$ where $(\rho_\varepsilon)_{\varepsilon > 0}$ is a family of standard mollifiers. Then, by usual properties of mollifiers 
	\[
	\inf_{\varphi \in C^\infty_0(\Omega)}\Big\{ [\varphi]_{W^{s,1}(\mathbb{R}^N)} : \|\varphi\|_{L^1(E)} = 1 \Big\}  \le \lim_{\varepsilon \rightarrow 0}  \dfrac{[1_{\mathcal{O}} \ast \rho_{\varepsilon}]_{W^{s,1}(\mathbb{R}^N)}}{\|1_{\mathcal{O}} \ast \rho_{\varepsilon}\|_{L^1(E)}} = \dfrac{P_s(\mathcal{O})}{|\mathcal{O} \cap E |}.
	\]
	By the arbitrariness of $\mathcal{O}$, we get a first inequality. On the other hand, let $\varphi \in C^\infty_0(\Omega)$ be a nonnegative function. We can easily observe that this is not restrictive. Then, by the Coarea Formula for $[\,\cdot\,]_{W^{s,1}(\mathbb{R}^N)}$ and Sard's Lemma
	\[
	\begin{split}
		[\varphi]_{W^{s,1}(\mathbb{R}^N)} = \int_0^\infty P_s\left(\left\{\varphi > t\right\}\right)\,dt \ge h_s\left(E; \Omega\right)\,\int_0^\infty |\left\{\varphi > t\right\} \cap E|\,dt = h_s\left(E; \Omega\right)\,\|\varphi\|_{L^1(E)},
	\end{split}
	\]
	where the last equality follows by Cavalieri's formula. This entails the reverse inequality as wanted. 
\end{proof}
\newpage
As a byproduct of the foregoing result, we infer the following $L^1-W^{s,1}$ Poincar\'e-type inequality 

\begin{lemma} \label{poincarè-Palle} 
	Let $0 < r \le R < \infty$, we have \[
	\inf_{\varphi \in C^\infty_0(B_R)}\left\{\iint_{\mathbb{R}^N \times \mathbb{R}^N} \frac{|\varphi(x) - \varphi(y)|}{|x-y|^{N + s}}\,dxdy : \int_{B_r} |\varphi|\,dx = 1\right\} = \frac{P_s(B_r)}{|B_r|},
	\]
	that is, $E = B_r$ is the unique optimal set for the Cheeger-type problem defining $h_s\left(B_r; B_R\right)$. In particular, we have the following sharp estimate
	\begin{equation}
		\fint_{B_r} |\varphi|\,dx \le \frac{1}{P_s(B_r)}\,\iint_{\mathbb{R}^N \times \mathbb{R}^N} \frac{|\varphi(x) - \varphi(y)|}{|x-y|^{N + s}}\,dxdy, \quad \mbox{ for } \varphi \in C^\infty_0(B_R). 
	\end{equation}
\end{lemma}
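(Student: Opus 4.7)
The plan is to invoke Proposition \ref{Cheeger--type constant} with $E = B_r$ and $\Omega = B_R$ to identify the left-hand side of the statement with the Cheeger-type quantity
\[
h_s(B_r;B_R) = \inf\Big\{\tfrac{P_s(\mathcal{O})}{|\mathcal{O} \cap B_r|} : \mathcal{O} \Subset B_R \text{ open with smooth boundary}\Big\},
\]
thereby reducing the problem to a set minimization. The upper bound $h_s(B_r;B_R) \le P_s(B_r)/|B_r|$ follows by testing with $\mathcal{O} = B_{r'}$ for $r' \in (0,r)$ (which is smooth and satisfies $B_{r'} \Subset B_R$ since $r' < r \le R$), giving $P_s(B_{r'})/|B_{r'}| = (r')^{-s}\,P_s(B_1)/|B_1|$; letting $r' \nearrow r$ and recalling the scaling of $P_s$ and $|\cdot|$ yields the claim.

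For the matching lower bound, I would combine the fractional isoperimetric inequality with Hardy-Littlewood rearrangement. Given any admissible $\mathcal{O}$, let $B_\rho$ denote the ball centered at the origin with $|B_\rho| = |\mathcal{O}|$. The fractional isoperimetric inequality (provable by Schwarz symmetrization, see e.g.\ \cite{FFMMM}) gives $P_s(\mathcal{O}) \ge P_s(B_\rho)$, while the Hardy-Littlewood inequality applied to the indicators $1_\mathcal{O}$ and $1_{B_r}$ (the latter already being its own symmetric decreasing rearrangement) gives $|\mathcal{O} \cap B_r| \le |B_\rho \cap B_r|$. Hence
\[
\frac{P_s(\mathcal{O})}{|\mathcal{O} \cap B_r|} \ge \frac{P_s(B_\rho)}{|B_\rho \cap B_r|},
\]
reducing the problem to the one-parameter optimization $\rho \mapsto P_s(B_\rho)/|B_\rho \cap B_r|$ on $(0,\infty)$.

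A case split finishes the argument. For $\rho \in (0,r]$, one has $|B_\rho \cap B_r| = |B_\rho|$, so the ratio equals $\rho^{-s}\,P_s(B_1)/|B_1|$, strictly decreasing in $\rho$ and hence minimized at $\rho = r$ with value $P_s(B_r)/|B_r|$. For $\rho > r$, $|B_\rho \cap B_r| = |B_r|$ and the ratio equals $\rho^{N-s}\,P_s(B_1)/|B_r|$, which strictly exceeds $P_s(B_r)/|B_r|$. Uniqueness of $B_r$ as optimizer is then forced by these strict inequalities away from $\rho = r$, together with the equality cases of the fractional isoperimetric inequality (identifying balls) and of the Hardy-Littlewood inequality (forcing the correct centering so that the optimal ball coincides with $B_r$).

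The main obstacle is essentially bookkeeping: locating a clean reference for the fractional isoperimetric inequality with its equality cases within the class of smooth open sets, or else carrying out the symmetrization step directly via the Riesz-type inequality for the $s$-perimeter available in the literature cited in the paper.
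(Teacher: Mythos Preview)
Your proof is correct and follows essentially the same route as the paper: reduce to the Cheeger-type problem via Proposition \ref{Cheeger--type constant}, symmetrize using the fractional isoperimetric inequality (the paper cites \cite{FS} rather than \cite{FFMMM} for the strict version with equality cases), and finish with the one-parameter optimization over $\rho$. The only cosmetic differences are that the paper verifies $|\mathcal{O}^* \cap B_r| \ge |\mathcal{O} \cap B_r|$ by a direct two-case check rather than invoking Hardy--Littlewood, and does not spell out the approximation $r' \nearrow r$ for the upper bound.
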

\begin{proof}
	In light of Proposition \ref{Cheeger--type constant}, we only need to prove that $h_s\left(B_r; B_R\right) = P_s(B_r)/|B_r|.$
	Let $\mathcal{O} \Subset B_R$ be an open set with smooth boundary.  By the {\it strict Polya-Sz\"ego-type inequality} \cite[Theorem A.1]{FS}, we have \[ 
	P_s(\mathcal{O}^*)\le P_s(\mathcal{O}),
	\] 
	where $\mathcal{O}^*$ is the ball centered at the origin with $|\mathcal{O}^*|=|\mathcal{O}|$, and the equality holds if and only if $\mathcal{O} = \mathcal{O}^*$, up to translations. Let $\rho^*$ be the radius of $\mathcal{O}^*$, if $\rho^* \leq r$ we have $\mathcal{O}^* \subseteq B_r$ and so \[
	|\mathcal{O}^* \cap B_r| = |\mathcal{O}^*| = |\mathcal{O}| \geq |\mathcal{O} \cap B_r|.
	\]
	On the other hand, if $\rho^* > r$ we have that $B_r \subseteq \mathcal{O}^*$ and so again \[
	|\mathcal{O}^* \cap B_r| = |B_r| \geq |\mathcal{O} \cap B_r|.
	\]
	In particular \[
	\dfrac{P_s(\mathcal{O})}{|\mathcal{O} \cap B_r|} \geq \dfrac{P_s(\mathcal{O}^*)}{|\mathcal{O}^* \cap B_r|},
	\]
	with equality if and only if $\mathcal{O} = \mathcal{O}^*$, up to translations. 
	Thus, we can restrict the class of competitors for $h_s\left(B_r; B_R\right)$ to the balls centered at $0$ and with radius $0 < \rho < R$. Let $B_\rho$ one of these balls, by the scaling properties of $P_s$ we have 
	\[
	\dfrac{P_s\left({B}_\rho\right)}{\left|B_\rho \cap B_{r}\right|}=
	\begin{cases}
		\dfrac{\rho^{N-s}\,P_s\left(B_1\right)}{r^N\,\omega_N}, \quad &\mbox{ if } r \leq \rho < R, \\ 
		\\
		\dfrac{\rho^{N-s}\,P_s\left(B_1\right)}{\rho^N\,\omega_N}, \quad &\mbox{ if } 0 < \rho < r.
	\end{cases}
	\]
	It is immediate to see that the infimum is attained when $\rho=r$, from which we can also infer the uniqueness of the optimal set, as claimed.
\end{proof}	
\subsection{A Poincar\'e-type constant}
If $1 < p < \infty$, we take advantage of the forthcoming nonlocal torsion-like PDE.  
\begin{proposition}\label{cor:Linfty-bound}
	Let $1 < p< \infty$ and $0 < s < 1$ be such that $s\,p \le N$. For every $0 < r \le R$, there exists a unique $V \in \widetilde{W}^{s,p}_0(B_R)$ weak solution of the following equation
	\begin{equation*} \label{euler-lagrange-torsion}
		\begin{cases}
			\begin{aligned}
				(-\Delta_p)^s V &= 1_{B_r}, \qquad &&\mbox{ in } B_R,\\
				\\
				V &=0, \qquad &&\mbox{ in } \mathbb{R}^N \setminus B_R,
			\end{aligned}
		\end{cases}
	\end{equation*}
		that is $V$ satisfies
	\begin{equation} \label{eqn-statement}
		\iint_{\mathbb{R}^N \times \mathbb{R}^N} \frac{|V(x) - V(y)|^{p-2}(V(x) - V(y))(\varphi(x) - \varphi(y))}{|x-y|^{N+s\,p}}dxdy = \int_{B_r} \varphi dx, \mbox{ for } \varphi \in C^\infty_0(B_R).
	\end{equation}
	Moreover, $V \ge 0$ and we have the following estimates
	\begin{itemize}
\item if $s\,p < N$,
\begin{equation} \label{torsione}
	[V]_{W^{s, p}(\mathbb{R}^N)} \le \left(|B_r|^{\frac{s\,p}{N}-1+p} \,\mathcal{S}_{N, p, s}\right)^{\frac{1}{p\,(p-1)}};
\end{equation}
\vskip.2cm \noindent
\item if $s\,p=N$,
\begin{equation} \label{torsione-conforme}
	[V]_{W^{s, \frac{N}{s}}(\mathbb{R}^N)} \le \left(\frac{|B_R|^{\frac{N}{s}}}{\omega_N^{\frac{N}{s}}\,\lambda^s_{\frac{N}{s}, 1}(B_1)}\right)^{\frac{1}{\frac{N}{s}\,(\frac{N}{s}-1)}}.
\end{equation}
\end{itemize}
\end{proposition}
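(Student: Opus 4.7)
The plan is to apply the direct method of the calculus of variations to the strictly convex functional
\[
\mathcal{F}(u) := \frac{1}{p}[u]^{p}_{W^{s,p}(\mathbb{R}^N)} - \int_{B_r} u\,dx, \qquad u \in \widetilde{W}^{s,p}_0(B_R).
\]
Coercivity of $\mathcal{F}$ follows from $\lambda^s_p(B_R) > 0$ combined with H\"older's inequality on $B_r$; weak lower semicontinuity of the seminorm is standard (Fatou on $\mathbb{R}^N\times\mathbb{R}^N$); and the reflexivity of $\widetilde{W}^{s,p}_0(B_R)$ noted in Remark \ref{rmk:riflessivita}, together with strict convexity (since $1<p<\infty$), delivers a unique minimizer $V$. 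Computing the first variation of $\mathcal{F}$ along $\varphi\in C^\infty_0(B_R)$ produces exactly the weak formulation \eqref{eqn-statement}, so this $V$ is the required unique weak solution.

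For the sign, I would compare $V$ with the truncation $V_+:=\max\{V,0\}$. Since $t\mapsto t_+$ is $1$-Lipschitz, $|V_+(x)-V_+(y)|\le|V(x)-V(y)|$ pointwise, hence $[V_+]_{W^{s,p}(\mathbb{R}^N)}\le[V]_{W^{s,p}(\mathbb{R}^N)}$ (cf.\ \cite[Lemma 2.7]{Warma}, already invoked in the paper); moreover $\int_{B_r}V_+\ge \int_{B_r}V$ and $V_+\in\widetilde{W}^{s,p}_0(B_R)$, the latter because $B_R$ has Lipschitz boundary. Thus $\mathcal{F}(V_+)\le\mathcal{F}(V)$, and uniqueness forces $V=V_+$ almost everywhere.

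For the quantitative bounds I would test \eqref{eqn-statement} with $V$ itself, a legitimate test function by the density of $C^\infty_0(B_R)$ in $\widetilde{W}^{s,p}_0(B_R)$ together with the continuity of both sides of \eqref{eqn-statement} with respect to $[\,\cdot\,]_{W^{s,p}(\mathbb{R}^N)}$ (the left-hand side by the $L^{p'}$--$L^p$ pairing of the Gagliardo kernel, the right-hand side via H\"older on $B_r$ and the Poincar\'e inequality on $B_R$). This yields the fundamental identity
\[
[V]^{p}_{W^{s,p}(\mathbb{R}^N)} = \int_{B_r} V\,dx.
\]
If $sp<N$, I would bound the right-hand side by H\"older on $B_r$ with exponent $p^{*}_s/(p^{*}_s-1)$ and then by the sharp fractional Sobolev inequality $\|V\|_{L^{p^{*}_s}(\mathbb{R}^N)}\le \mathcal{S}_{N,p,s}^{1/p}\,[V]_{W^{s,p}(\mathbb{R}^N)}$; dividing through by $[V]_{W^{s,p}(\mathbb{R}^N)}$ and taking the power $1/(p-1)$ gives \eqref{torsione}, after the elementary check that $(1-1/p^{*}_s)/(p-1)$ equals the stated exponent $(sp/N-1+p)/(p(p-1))$. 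If instead $sp=N$, the Sobolev exponent degenerates to $\infty$, and I would replace Sobolev by the Poincar\'e-Sobolev estimate associated with $\lambda^{s}_{N/s,1}(B_R)$, namely $\int_{B_r}V\le\int_{B_R}V\le\lambda^{s}_{N/s,1}(B_R)^{-s/N}\,[V]_{W^{s,N/s}(\mathbb{R}^N)}$, then use the scaling relation \eqref{scaling-frequency} to reexpress $\lambda^{s}_{N/s,1}(B_R)$ in terms of $\lambda^{s}_{N/s,1}(B_1)$ and $|B_R|$; this produces \eqref{torsione-conforme}.

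The main technical points will be (i) the density argument justifying that $V$ itself may be used as a test function in \eqref{eqn-statement}, and (ii) the exponent arithmetic in the final step, which has to be tracked carefully to match the precise form of the right-hand sides in \eqref{torsione}--\eqref{torsione-conforme}; both are routine but demand precision.
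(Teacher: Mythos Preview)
Your proposal is correct and follows essentially the same approach as the paper: direct method on the same energy functional $\mathfrak{F}$, strict convexity for uniqueness, testing the Euler--Lagrange equation with $V$ itself, and then H\"older plus the fractional Sobolev inequality (resp.\ the $(N/s,1)$-Poincar\'e inequality) in the subconformal (resp.\ conformal) case. You supply more detail than the paper---the truncation argument for $V\ge 0$, the density justification for $\varphi=V$, and the exponent bookkeeping---but the strategy is identical.
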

\begin{proof}
	The energy functional associated to the equation is 
	\[
	\mathfrak{F}(u) := \frac{1}{p}\,\iint_{\mathbb{R}^N \times \mathbb{R}^N} \frac{|u(x) - u(y)|^p}{|x-y|^{N+s\,p}} \,dxdy - \int_{B_r} u\,dx, \quad \mbox{ for } u \in \widetilde{{W}}^{s,p}_0(B_R),
	\]
	and every function $u$ satisfying \eqref{eqn-statement} is equivalently characterized as a minimum of $\mathfrak{F}$. 
\par 
	{\it Existence \& Uniqueness} of a nonnegative function $V \in \widetilde{{W}}^{s,p}_0(B_R)$ attaining the infimum of $\mathfrak{F}$ are direct consequences of the {\it Direct Method in the Calculus of Variations} and of the strict convexity of $\mathfrak{F}$, the details are left to the reader.
	\par 
	If $s\,p < N$, since $V$ is solution we can use \eqref{eqn-statement} with $\varphi = V$, by density, and by using also the fractional Sobolev's inequality we get
	\[
	\begin{aligned}
		[V]_{W^{s, p}(\mathbb{R}^N)}^p=\int_{B_r} V\,d x \leq |B_r|^{\frac{1}{(p^*_s)'}}\,\|V\|_{L^{p^*_s}(B_R)} & \leq |B_r|^{\frac{1}{(p^*_s)'}}\,\mathcal{S}_{N, p, s}^{\frac{1}{p}}\,[V]_{W^{s,p}(\mathbb{R}^N)},
	\end{aligned}
	\]
	from which we infer \eqref{torsione}. If $s\,p= N$, since $|B_R| < \infty$ we can infer that $\lambda_{\frac{N}{s},1}^s(B_R) > 0$, by \cite[Lemma 2.4]{BLP} and H\"older's inquality. Thus, by reasoning as before but using the $\left(\frac{N}{s}, 1\right)-$Poincar\'e inequality in place of the fractional Sobolev's inequality, we can infer the claimed estimate \eqref{torsione-conforme}.
\end{proof}

As a byproduct, we infer the following $L^1-W^{s,p}$ Poincar\'e-type estimate.
\begin{lemma} \label{lm:bound-l-infinito}
	Let $1 < p < \infty$ and $0 < s < 1$ be such that $s\,p \le N$. For every $0 < r \le R$, we have
	\vskip.2cm \noindent
	\begin{itemize} 
		\item if $s\,p<N$, 
		\begin{equation} \label{ineq:cap-potential}
			\left(\fint_{B_r} |\varphi|\,dx\right)^p \le \mathcal{S}_{N,p,s}\,|B_r|^{\frac{s\,p}{N} - 1}\iint_{\mathbb{R}^N \times \mathbb{R}^N}\frac{|\varphi(x) - \varphi(y)|^p}{|x-y|^{N+s\,p}}\,dxdy,\quad \mbox{ for } \varphi \in C^\infty_0(B_R),
		\end{equation}
		where $\mathcal{S}_{N,p,s}$ is the sharp constant in the fractional Sobolev's inequality;
		\vskip.2cm 
		\item if $s\,p=N$, 
		\begin{equation} \label{ineq:cap-potential-sp=N}
			\left(\fint_{B_r} |\varphi|\,dx\right)^{\frac{N}{s}} \le \frac{1}{K_{N,s}}\left(\frac{|B_R|}{|B_r|}\right)^{\frac{N}{s}}\iint_{\mathbb{R}^N \times \mathbb{R}^N}\,\frac{|\varphi(x) - \varphi(y)|^{\frac{N}{s}}}{|x-y|^{2\,N}}\,dxdy,\quad \mbox{ for }  \varphi \in C^\infty_0(B_R),
		\end{equation}
		where $K_{N,s} = \omega_N^{\frac{N}{s}}\, \lambda^s_{\frac{N}{s}, 1}(B_1)$. 
	\end{itemize}
\end{lemma}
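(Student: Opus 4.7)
The plan is to use the torsion-type function $V$ from Proposition \ref{cor:Linfty-bound} as a test function. The idea is that $V$ weakly solves $(-\Delta_p)^s V = 1_{B_r}$ in $B_R$ with nonlocal Dirichlet conditions, so testing its equation against $|\varphi|$ produces exactly the quantity $\int_{B_r} |\varphi|\,dx$ on the right-hand side.

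First, I would extend the validity of \eqref{eqn-statement} from test functions in $C^\infty_0(B_R)$ to test functions in $\widetilde{W}^{s,p}_0(B_R)$ by density. Given $\varphi \in C^\infty_0(B_R)$, the function $|\varphi|$ is Lipschitz, compactly supported in $B_R$, and vanishes outside $B_R$; hence Lemma \ref{lm:brezis-type} yields $|\varphi| \in \widetilde{W}^{s,p}_0(B_R)$, so that $|\varphi|$ is an admissible test function. Plugging $|\varphi|$ into the weak formulation gives
\[
\int_{B_r} |\varphi|\,dx = \iint_{\mathbb{R}^N \times \mathbb{R}^N} \frac{|V(x)-V(y)|^{p-2}(V(x)-V(y))(|\varphi|(x)-|\varphi|(y))}{|x-y|^{N+s\,p}}\,dxdy.
\]

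Next, I would apply H\"older's inequality with exponents $p/(p-1)$ and $p$, combined with the pointwise inequality $\big||\varphi|(x) - |\varphi|(y)\big| \le |\varphi(x)-\varphi(y)|$, to deduce
\[
\int_{B_r} |\varphi|\,dx \le [V]_{W^{s,p}(\mathbb{R}^N)}^{p-1}\,[\varphi]_{W^{s,p}(\mathbb{R}^N)}.
\]
In the subcritical regime $s\,p < N$, inserting \eqref{torsione} and raising to the $p$-th power gives
\[
\Big(\int_{B_r}|\varphi|\,dx\Big)^p \le |B_r|^{\frac{s\,p}{N}-1+p}\,\mathcal{S}_{N,p,s}\,[\varphi]^p_{W^{s,p}(\mathbb{R}^N)},
\]
and dividing by $|B_r|^p$ yields \eqref{ineq:cap-potential}. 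In the conformal regime $s\,p=N$, inserting \eqref{torsione-conforme} and raising to the $N/s$-th power yields
\[
\Big(\int_{B_r} |\varphi|\,dx\Big)^{N/s} \le \frac{|B_R|^{N/s}}{\omega_N^{N/s}\,\lambda^s_{N/s,1}(B_1)}\,[\varphi]^{N/s}_{W^{s,N/s}(\mathbb{R}^N)},
\]
and dividing by $|B_r|^{N/s}$ gives \eqref{ineq:cap-potential-sp=N}, recalling that the Gagliardo seminorm with $p=N/s$ features the kernel $|x-y|^{-2N}$.

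The content of the argument is essentially mechanical once one has Proposition \ref{cor:Linfty-bound}; the only mildly delicate point is the extension of the Euler-Lagrange identity to the test function $|\varphi|$, which is not $C^\infty$. Lemma \ref{lm:brezis-type} handles this cleanly, and the rest is a single application of H\"older coupled with the triangle inequality for $|\,\cdot\,|$.
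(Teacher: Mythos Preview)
Your proof is correct and follows essentially the same route as the paper: test the Euler--Lagrange identity for $V$ with $|\varphi|$, apply H\"older, and insert the seminorm bounds \eqref{torsione}--\eqref{torsione-conforme}. The paper additionally shows that $[V]_{W^{s,p}(\mathbb{R}^N)}^{p(p-1)}$ actually \emph{equals} the sharp constant in the $L^1$--$W^{s,p}$ inequality (by testing with $V$ itself), but this reverse direction is not needed for the lemma, so your more streamlined version loses nothing.
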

\begin{proof}
	Let $V$ be as in Proposition \ref{cor:Linfty-bound}. We claim that 
	\begin{equation} \label{claim-torsione}
		\sup_{\varphi \in C^\infty_0(B_R)} \left\{\left(\displaystyle \int_{B_r} |\varphi|\,dx\right)^p: [\varphi]_{W^{s,p}(\mathbb{R}^N)} = 1 \right\} = [V]_{W^{s,p}(\mathbb{R}^N)}^{p\,(p-1)}.
	\end{equation}
	Indeed, by density, we can use the function $\varphi = V$ in \eqref{eqn-statement} obtaining
	\begin{equation} \label{eqn:from-el}
		\int_{B_r} V\,dx = [V]^p_{W^{s,p}(\mathbb{R}^N)},
	\end{equation} 
	which yields
	\begin{equation} \label{ineq:torsion}
		\sup_{\varphi \in C^\infty_0(B_R)} \left\{\left(\displaystyle \int_{B_r} |\varphi|\,dx\right)^p: [\varphi]_{W^{s,p}(\mathbb{R}^N)} = 1 \right\} \ge [V]_{W^{s,p}(\mathbb{R}^N)}^{p\,(p-1)}.
	\end{equation}
	On the other hand, by using again \eqref{eqn-statement} and H\"older's inequality  
	\[
	\begin{split}
		\int_{B_r} |\varphi|\,dx &=  \iint_{\mathbb{R}^N \times \mathbb{R}^N} \frac{|V(x) - V(y)|^{p-2}(V(x) - V(y))\,\left(|\varphi(x)| - |\varphi(y)|\right)}{|x-y|^{N+s\,p}}\,dxdy \\
		&= \iint_{\mathbb{R}^N \times \mathbb{R}^N} \frac{|V(x) - V(y)|^{p-2}(V(x) - V(y))}{|x-y|^{N\,\frac{p-1}{p} + s\,(p-1)}}\,\frac{|\varphi(x)| - |\varphi(y)|}{|x-y|^{\frac{N}{p}+s}}\,dxdy \\
		&\le [V]_{W^{s,p}(\mathbb{R}^N)}^{p-1}\,[\varphi]_{W^{s,p}(\mathbb{R}^N)}, 
	\end{split}
	\]
	for every $\varphi \in C^\infty_0(B_R)$. This entails the reverse inequality of \eqref{ineq:torsion} and thus \eqref{claim-torsione}. The claimed estimates then follow by spending \eqref{torsione} and \eqref{torsione-conforme} in \eqref{claim-torsione}.
\end{proof}

\section{Poincar\'e inequalities} \label{sec:4}
In this section we derive new {\it fractional Maz'ya-Poincar\'e-type inequalities}, see Lemma \ref{lm:mazya-poin} and Lemma \ref{lm:mazya-poin2} below. As anticipated in the Introduction, they involve  the presence of an {\it asymmetric seminorm} on strips (see Remark \ref{rmk:seminorma-asimmetrica}) on the right-hand side and display sharp limiting behaviours for $s \searrow 0$ and $s \nearrow 1$. 
\vskip.2cm \noindent  
All the inequalities of this section will be stated for smooth functions with compact support, nevertheless they can be extended to every function in $W^{s,p}(\mathbb{R}^N)$ by density (see for instance \cite[Theorem 6.66]{Leoni_fractional}). \par 
For the forthcoming technincal result, we  take advantage of the $K-$method of the {\it Real Interpolation Theory} applied to fractional Sobolev spaces, for which we refer to \cite{BS}, \cite[Chapter 16]{Leoni}, \cite[Chapter 12]{Leoni_fractional} and to the references therein contained.      
\begin{proposition} \label{k-functional}
	Let $1 \le p < \infty$ and $0 < s < 1$. Let $x_0 \in \mathbb{R}^N$ and $r > 0$.
	For every $u \in C^\infty_0(\mathbb{R}^N)$ we have 
	\begin{equation}  \label{eqn:clam-1-dir-der}
		\int_{0}^{\infty}\left(\frac{K(t,u)}{t^s}\right)^p \frac{dt}{t} \le A_{N,p}\,\iint_{B_r(x_0) \times \mathbb{R}^N} \frac{|u(x) - u(y)|^p}{|x-y|^{N+s\,p}}\,dxdy,
	\end{equation}	
	where $K(t,u)$ is the $K-${\it functional}\footnote{Since in this context there is no ambiguity regarding the choice of the interpolation spaces, with an abuse of notation, we will denote the $K-$functional 	$K\left(t,u, L^p(B_r(x_0)\right), \mathcal{D}^{1,p}_0(B_r(x_0)))$ only with $K(t,u).$} given by
	\begin{equation*} \label{eqn:k-functional}
		K\left(t,u, L^p(B_r(x_0)\right), \mathcal{D}^{1,p}_0(B_r(x_0))) := \min_{v \in W^{1,p}(B_r(x_0))} \Big[\|u-v\|_{L^p(B_r(x_0))} + t\|\nabla v\|_{L^p(B_r(x_0))}\Big], \quad t \in [0, \infty).
	\end{equation*}
\end{proposition}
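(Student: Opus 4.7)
Fix a nonnegative radial mollifier $\phi \in C^\infty_0(B_1)$ with $\int_{\mathbb{R}^N} \phi = 1$, and set $\phi_t(z) := t^{-N}\phi(z/t)$. The idea is to pick, for every $t > 0$, the explicit competitor $v_t := \phi_t \ast u$, which belongs to $C^\infty(\mathbb{R}^N)\cap W^{1,p}(B_r(x_0))$ since $u \in C^\infty_0(\mathbb{R}^N)$. Both pieces entering the $K$-functional will be controlled by the same elementary quantity
\[
I(t) := \int_{B_r(x_0)} \fint_{B_t(x)} |u(x)-u(y)|^p\, dy\, dx,
\]
and the asymmetric Gagliardo seminorm is recovered after an $s$-free Fubini exchange.

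\textbf{Pointwise bounds on the two pieces.} Since $\phi_t(x-\cdot)\,dy$ is a probability measure, Jensen's inequality yields
\[
|u(x)-v_t(x)|^p \le \int_{\mathbb{R}^N} \phi_t(x-y)\,|u(x)-u(y)|^p\,dy \le \|\phi\|_\infty\, t^{-N}\int_{B_t(x)} |u(x)-u(y)|^p\,dy,
\]
so that $\|u-v_t\|^p_{L^p(B_r(x_0))} \le c_1(N,p)\, I(t)$. For the gradient, the cancellation $\int_{\mathbb{R}^N} \nabla \phi_t = 0$ gives the identity $\nabla v_t(x) = \int_{\mathbb{R}^N} \nabla\phi_t(x-y)\,(u(y)-u(x))\,dy$, and the pointwise bound $|\nabla \phi_t| \le c(N)\, t^{-N-1}\,\mathbf{1}_{B_t}$ combined with Jensen's inequality again yields $|\nabla v_t(x)|^p \le c_2(N,p)\, t^{-p}\, \fint_{B_t(x)} |u(x)-u(y)|^p\,dy$, hence $t^p \|\nabla v_t\|^p_{L^p(B_r(x_0))} \le c_3(N,p)\, I(t)$. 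Using the elementary inequality $(a+b)^p \le 2^{p-1}(a^p + b^p)$, one obtains the key estimate $K(t,u)^p \le c_4(N,p)\, I(t)$.

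\textbf{Fubini and $s$-independent constant.} Multiplying by $t^{-sp-1}$ and applying Tonelli's theorem,
\[
\int_0^\infty K(t,u)^p\, t^{-sp-1}\,dt \le \frac{c_4(N,p)}{\omega_N}\iint_{B_r(x_0)\times\mathbb{R}^N} |u(x)-u(y)|^p \left(\int_{|x-y|}^{\infty} t^{-sp-N-1}\,dt\right) dy\,dx.
\]
The inner $dt$-integral equals $|x-y|^{-(sp+N)}/(sp+N)$, reconstructing exactly the kernel of the asymmetric Gagliardo seminorm. Since $1/(sp+N) \le 1/N$ uniformly in $s \in (0,1)$ and $p \ge 1$, the resulting constant $A_{N,p}$ depends only on $N$ and $p$, as the statement requires.

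\textbf{Main obstacle.} The computation is transparent because the competitor $v_t = \phi_t \ast u$ exploits the values of $u$ on the whole $\mathbb{R}^N$, which is exactly why the $L^p$- and gradient-estimates on $B_r(x_0)$ match differences $|u(x)-u(y)|$ with $y$ ranging over $\mathbb{R}^N$, i.e.\ precisely the asymmetric integration domain on the right-hand side. The only delicate point is to keep track of constants and avoid sneaking in $s$-dependent factors: no extension operator, cut-off, or reflection of $u$ across $\partial B_r(x_0)$ is required, which is the very reason the asymmetric seminorm on the strip is the correct object to compare with the $K$-functional. Replacing the strip by the Gagliardo seminorm on $B_r(x_0)$ alone would force the use of a boundary extension with operator norm degenerating as $s \to 0$ or $s \to 1$, and would spoil the uniformity of $A_{N,p}$ that the paper crucially needs for the sharp asymptotic behaviors in $s$.
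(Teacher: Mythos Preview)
Your proof is correct and follows the same overall strategy as the paper's: choose the convolution $v_t=\phi_t\ast u$ as competitor in the $K$-functional, estimate both $\|u-v_t\|_{L^p(B_r(x_0))}$ and $t\,\|\nabla v_t\|_{L^p(B_r(x_0))}$ by an average of $|u(x)-u(y)|$ over $y\in B_t(x)$, and then integrate in $t$. The execution differs, however. The paper works with the spherical $L^p$-mean $\overline{U}(\varrho)=\fint_{\partial B_\varrho}\big(\int_{B_r}|u(x+h)-u(x)|^p\,dx\big)^{1/p}d\mathcal{H}^{N-1}$, uses the integral Minkowski inequality to obtain $K(t,u)\le C\,t^{-1}\int_0^t\overline{U}\,d\varrho$, and then closes the argument with the one-dimensional Hardy inequality. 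Your route is more direct: the pointwise Jensen bounds give $K(t,u)^p\le c\,I(t)$ outright, and a single Fubini exchange reconstitutes the Gagliardo kernel, with the factor $1/(N+sp)\le 1/N$ absorbing any $s$-dependence. This avoids Hardy entirely and makes the $s$-independence of $A_{N,p}$ more transparent; the paper's version, on the other hand, keeps everything at the level of $L^p$-norms (no $\|\phi\|_\infty$ or $\|\nabla\phi\|_\infty$ enter) and connects more visibly with the $K$-functional machinery of \cite{BS}.
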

\begin{proof}
	Without loss of generality, we can assume $x_0=0$. We prove the claimed inequality \eqref{eqn:clam-1-dir-der} along the lines of \cite[Proposition 4.5]{BS}. 
	Let $u \in C^\infty_0(\mathbb{R}^N)$, we set 
	\[
	U(h) = \left(\int_{B_{r}}|u(x+h) - u(x)|^p dx\right)^\frac{1}{p}, \qquad \mbox{ for } h \in \mathbb{R}^N.
	\]
	The rightmost term in \eqref{eqn:clam-1-dir-der} is then given by
	\[
	\iint_{B_r \times \mathbb{R}^N} \frac{|u(x) -u(x+h)|^p}{|h|^{N+s\,p}}\,dx\,dh = \int_{\mathbb{R}^N}\frac{U(h)^p}{|h|^{N+s\,p}}\,dh.
	\]
	We also set 
	\[
	\overline{U}(\varrho) := \fint_{\partial B_{\varrho}} Ud\mathcal{H}^{N-1}, \qquad \mbox{ for } \varrho > 0.
	\]
	By Jensen's inequality 
	we get
	\begin{equation} \label{ineq:jensen-dir-der}
		\begin{split}
			\int_{0}^{\infty}\overline{U}^p\frac{d\varrho}{\varrho^{1+s\,p}} &\le \frac{1}{N\omega_N}\int_{0}^{\infty}\left(\int_{\partial B_\varrho} U^p\, d\mathcal{H}^{N-1}\right)\,\frac{d\varrho}{\varrho^{N+s\,p}} \\ 
			&= \frac{1}{N\omega_N} \int_{\mathbb{R}^N}\frac{U(h)^p}{|h|^{N+s\,p}}\,dh \\
			&= \frac{1}{N\omega_N} \iint_{B_r \times \mathbb{R}^N} \frac{|u(x) - u(x+h)|^p}{|h|^{N+s\,p}}\,dxdh.
		\end{split}
	\end{equation}
	We introduce the function \[
	\psi(x) = \frac{N+1}{\omega_N}\big(1-|x|\big)_{+}, \qquad \mbox{ for } x \in \mathbb{R}^N,
	\]
	which is Lipschitz with compact support and with $L^1-$norm normalized to $1$. We set
	\[
	\psi_t(x) = \frac{1}{t^N}\,\psi\left(\frac{x}{t}\right), \qquad \mbox{ for } t > 0,
	\]
	this function is supported on $\overline{B_t}$ and still have unitary $L^1-$norm. By standard properties of convolutions,
	we have that $\psi_t \ast u \in W^{1,p}(B_r)$, thus 
	\[
	K(t,u) \le \|u - \psi_t \ast u\|_{L^p(B_r)} + t\,\|\nabla \left( \psi_t \ast u\right)\|_{W^{1,p}(B_r)}, \qquad \mbox{ for } t > 0.
	\]
	We estimate separately the last two terms. For the first one, we use the integral Minkowski's inequality, see \cite[Theorem 2.4]{LL} for instance, with data
	\[
	f(x,y):= u(x) - u(x-y), \quad \nu(dy):= \psi_t(y)\,dy, \quad \mu(dx):= dx, \quad \Omega:= B_r \quad \Gamma:= B_t,
	\]
	by using the notation therein contained. Along with Fubini's Theorem, we get 
	\[
	\begin{split}
		\| u - \psi_t \ast u\|_{L^p(B_r)} &= \left(\int_{B_r}\left(\int_{B_t} (u(x) - u(x-y))\,\psi_t(y)\,dy\right)^p dx\right)^{\frac{1}{p}} \\
		&\le \int_{B_t}\left(\int_{B_r}\left(u(x) - u(x-y)\right)^pdx\right)^{\frac{1}{p}}\,\psi_t(y)\,dy \\
		&= \int_{B_t} U(-y)\,\psi_t(y)\,dy \le \frac{N(N+1)}{t^N} \int_{0}^{t} \overline{U}\varrho^{N-1}\,d\varrho \\
		&\le \frac{N(N+1)}{t}\int_{0}^{t} \overline{U}\,d\varrho.
	\end{split}
	\]
	For the second term, by the Divergence Theorem  
	\[
	\int_{B_t} \nabla \psi_t(y)\,dy = 0,
	\]
	thus
	\[
	\nabla \left(\psi_t \ast u\right) = (\nabla \psi_t) \ast u = \int_{B_t} \nabla \psi_t(y)\left(u(x-y) - u(x)\right)dy.
	\]
	Then, by using the integral Minkowski's inequality, we get \[
	\begin{split}
		\|\nabla(\psi_t \ast u)\|_{L^p(B_r)}&= \left(\int_{B_r}\left(\int_{B_t} \nabla \psi_t(y)\left(u(x-y) - u(x)\right)dy\right)^{p} dx\right)^{\frac{1}{p}} \\
		&\le \int_{B_t} \left(\int_{B_r} |u(x-y) - u(x)|^pdx\right)^{\frac{1}{p}} \left|\nabla \psi_t(y)\right|dy \\
		&\le \frac{N+1}{\omega_N\,t^{N+1}} \int_{B_t} U(-y)\,dy \le \frac{N\,(N+1)}{t^2} \int_{0}^{t} \overline{U}\,d\varrho.
	\end{split}
	\]
	By collecting the previous estimates, we infer that 
	\begin{equation*}
		K(t, u) \le \|u - \psi_t \ast u\|_{L^p(B_r)} + t\,\|\nabla \left( \psi_t \ast u\right)\|_{W^{1,p}(B_r)} \le  \frac{2N(N+1)}{t} \int_{0}^t \overline{U}d\varrho, \mbox{ for } t > 0,
	\end{equation*}
	yielding 
	\[
	\int_{0}^{\infty} \left(\frac{K(t,u)}{t^s}\right)^p \frac{dt}{t} \le \left(2N(N+1)\right)^p\int_{0}^{\infty} \left(\frac{1}{t}\int_{0}^{t} \overline{U}d\varrho\right)^p\frac{dt}{t^{1+s\,p}}.
	\]
	To estimate the last term, we use the one-dimensional Hardy's inequality (see for instance \cite[Theorem C.41]{Leoni}) obtaining
	\[
	\begin{split}
		\int_{0}^{\infty} \left(\frac{K(t,u)}{t^s}\right)^p \frac{dt}{t} &\le \left(\frac{2N(N+1)}{s+1}\right)^p\int_0^{\infty} \overline{U}^p\frac{dt}{t^{1+s\,p}}\\
		&\le  \frac{\left(N+1\right)^p\,N^{p-1}}{\omega_N}\iint_{B_r \times \mathbb{R}^N} \frac{|u(x) - u(x+h)|^p}{|h|^{N+s\,p}}dxdh,
	\end{split}
	\]
	where in the last line we used \eqref{ineq:jensen-dir-der}. The proof is thereby complete. 
\end{proof}

We also need the following Poincar\'e-Wirtinger-type estimate, which is of independent interest. 

\begin{lemma}[Fractional Poincar\'e-Wirtinger inequality] \label{lm:poincare-wirtinger}
	Let $1 \le p < \infty$ and $0 < s < 1$. Let $x_0 \in \mathbb{R}^N$ and $r > 0$.
	For every $u \in C^\infty_0(\mathbb{R}^N)$ we have 
	\begin{equation*}\label{ineq:poin-wirtinger}
		\|u - \mathrm{av}(u; B_r(x_0))\|^{p}_{L^p(B_r(x_0))} \le \mathcal{W}_{N,p}\,s\,(1-s)\,r^{s\,p}\,\iint_{B_r(x_0) \times \mathbb{R}^N} \frac{|u(x) - u(y)|^p}{|x-y|^{N+s\,p}}\,dxdy,
	\end{equation*}
	for some $\mathcal{W}_{N, p} > 0$.
\end{lemma}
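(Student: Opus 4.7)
The plan is to deduce the inequality from the $K$-functional representation of Proposition \ref{k-functional}, via three steps: (i) bound $\|u - \mathrm{av}(u; B_r(x_0))\|_{L^p(B_r(x_0))}$ by $K(r, u)$ using the classical Poincar\'e-Wirtinger inequality for $W^{1,p}$ functions on a ball; (ii) bound $K(r, u)^p$ from above by $s\,(1-s)\,r^{s\,p}$ times the $K$-functional integral, by exploiting the concavity of $K$; (iii) invoke Proposition \ref{k-functional} to relate the $K$-functional integral to the asymmetric seminorm.

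For step (i), the classical Poincar\'e-Wirtinger inequality gives $\|v - \mathrm{av}(v; B_r(x_0))\|_{L^p(B_r(x_0))} \le C_{PW}(N,p)\,r\,\|\nabla v\|_{L^p(B_r(x_0))}$ for every $v \in W^{1,p}(B_r(x_0))$. Combining this with the triangle inequality and the Jensen-type estimate $\|\mathrm{av}(u-v; B_r(x_0))\|_{L^p(B_r(x_0))} \le \|u-v\|_{L^p(B_r(x_0))}$, one arrives at
\[
\|u - \mathrm{av}(u; B_r(x_0))\|_{L^p(B_r(x_0))} \le 2\,\|u-v\|_{L^p(B_r(x_0))} + C_{PW}\,r\,\|\nabla v\|_{L^p(B_r(x_0))}.
\]
Setting $C_1 = \max(2, C_{PW})$ and taking the infimum over $v \in W^{1,p}(B_r(x_0))$ yields $\|u - \mathrm{av}(u; B_r(x_0))\|_{L^p(B_r(x_0))} \le C_1(N,p)\,K(r, u)$.

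For step (ii), the key observation is that $t \mapsto K(t, u)$ is concave in $t$ (as a pointwise infimum of functions affine in $t$), non-decreasing, and vanishes at $t = 0$ (indeed, one may take $v = u|_{B_r(x_0)} \in W^{1,p}(B_r(x_0))$). These three properties force the lower bound $K(t, u) \ge \min(1, t/r)\,K(r, u)$ for all $t \ge 0$. A direct computation then gives
\[
\int_{0}^{\infty} \left(\frac{\min(1,\,t/r)}{t^s}\right)^p \frac{dt}{t} = \frac{r^{-s\,p}}{p\,(1-s)} + \frac{r^{-s\,p}}{p\,s} = \frac{r^{-s\,p}}{p\,s\,(1-s)},
\]
which, together with the previous lower bound, yields $K(r,u)^p \le p\,s\,(1-s)\,r^{s\,p}\,\int_0^\infty (K(t,u)/t^s)^p\,dt/t$. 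Combining this with Proposition \ref{k-functional} and step (i) produces the desired inequality with $\mathcal{W}_{N,p} = p\,C_1^p\,A_{N,p}$.

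The main conceptual point, and likely the least obvious step in the argument, is the recognition that the $s\,(1-s)$ factor does \emph{not} arise from exploiting only the monotonicity of $K$ on $[r, \infty)$ (which alone yields the $1/s$ factor); it is crucial to also exploit the concavity-plus-vanishing-at-zero of $K$ on $[0, r]$ (which provides the $1/(1-s)$ factor). The pairing of these two contributions delivers the sharp asymptotic behaviour of the constant as $s \searrow 0$ and $s \nearrow 1$, in accordance with Remark \ref{rmk:seminorma-asimmetrica}.
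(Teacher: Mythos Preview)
Your proof is correct and follows the same strategy as the paper: bound $\|u - \mathrm{av}(u; B_r)\|_{L^p(B_r)}$ by a constant times $K(r,u)$ via the classical Poincar\'e--Wirtinger inequality, split the $K$-functional integral at $t=r$, and invoke Proposition~\ref{k-functional}. The only difference is on the interval $[0,r]$: the paper re-derives the bound $t\,\|u - \mathrm{av}(u; B_r)\|_{L^p} \lesssim r\,K(t,u)$ directly from the triangle and Poincar\'e--Wirtinger inequalities, whereas you obtain the equivalent estimate $K(t,u) \ge (t/r)\,K(r,u)$ more cleanly from the concavity of $t\mapsto K(t,u)$ together with $K(0,u)=0$.
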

\begin{proof} 
	Without loss of generality, we can assume $x_0 = 0$. We take $v \in W^{1,p}(B_r)$ and $t > 0$. By the triangle inequality and Jensen's inequality 
	\[
	\begin{split}
		\|u - {\rm av}(u; B_r)\|_{L^p(B_r)} &\le \|u-v\|_{L^p(B_r)} + \|v - {\rm av}(v; B_r)\|_{L^p(B_r)} + \|{\rm av}(u; B_r) - {\rm av}(v; B_r)\|_{L^p(B_r)}\\
		&\le 2\,\|u - v\|_{L^p(B_r)} + \|v - {\rm av}(v; B_r)\|_{L^p(B_r)} \\
		&\le \max\left\{2, \mu_{N,p} \right\} \left( \|u - v\|_{L^p(B_r)}	+ r\,\|\nabla v\|_{L^p(B_r)} \right),
	\end{split}
	\]
	for some $\mu_{N,p} > 0$, where we used the classical Poincar\'e-Wirtinger inequality on balls. Thus
	\[
	\|u - {\rm av}(u; B_r)\|^p_{L^p(B_r)} \le c_{N,p}\,K(r, u)^p \le c_{N,p}\,K(t, u)^p, \quad \mbox{ for } t> r,
	\]
	where we set $c_{N,p} := \left(\max\{2, \mu_{N,p}\}\right)^p$.
	This entails that 
	\begin{equation} \label{dall-infinito}
		\begin{split}
			\|u - {\rm av}(u; B_r)\|^p_{L^p(B_r)}\,\frac{1}{s\,p} \left(\frac{1}{r^{s\,p}}\right) &= \|u - {\rm av}(u; B_r)\|^p_{L^p(B_r)}\,\int_{r}^{\infty}\frac{dt}{t^{1+s\,p}}\\
			&\le c_{N,p}\,\int_{r}^{\infty} \left(\frac{K(t,u)}{t^s}\right)^p \frac{dt}{t}.
		\end{split}
	\end{equation}
	On the other hand, by arguing as in the beginning of the proof, for every $0 < t \le r$ we have 	
	\[
	\begin{split}
		t\,\|u - \mathrm{av}(u; B_r)\|_{L^p(B_r)} &\le 4r\,\|u - v\|_{L^p(B_r)} + t\,\|v-\mathrm{av}(v;B_r)\|_{L^p(B_r)} \\
		&\le  d_{N,p}\,r\left(\|u-v\|_{L^p(B_r)} + t\,\|\nabla v\|_{W^{1,p}(B_r)}\right),
	\end{split}
	\]
	where we set $d_{N,p} := \max\{4, \mu_{N,p}\}$, and we used again the classical Poincar\'e-Wirtinger inequality on balls. This yields 
	\[
	t^p\,\|u - \mathrm{av}(u; B_r)\|^p_{L^p(B_r)} \le d_{N,p}^p\,r^{p}\,K(t,u)^p, \qquad \mbox{ for } 0 < t \le r, 
	\]
	and so 
	\begin{equation} \label{al-finito}
		\begin{split}
			\|u - \mathrm{av}(u; B_r)\|^p_{L^p(B_r)} \frac{r^{(1-s)\,p}}{(1-s)\,p} &= \|u - \mathrm{av}(u; B_r)\|^p_{L^p(B_r)}\,\int_{0}^{r} \frac{t^p}{t^{1+s\,p}}dt \\
			&\le d_{N,p}^p\,r^p\,\int_{0}^r \left(\frac{K(t, u)}{t^s}\right)^p \frac{dt}{t}. 
		\end{split}
	\end{equation}
	By \eqref{dall-infinito} and \eqref{al-finito}, we get
	\[
	\|u - {\rm av}(u; B_r)\|^p_{L^p(B_r)} \left(\frac{1}{(1-s)\,p} + \frac{1}{s\,p}\right) \left(\frac{1}{r}\right)^{s\,p} \le a_{N,p} \int_{0}^{\infty} \left(\frac{K(t, u)}{t^s}\right)^p \frac{dt}{t},
	\]  
	where we set $a_{N,p}:= \max\left\{c_{N,p}^p, d_{N,p}^p\right\}$. By spending \eqref{eqn:clam-1-dir-der}, we eventually conclude the proof. 
\end{proof}

\begin{remark}[Asymptotic sharpness]
	By Remark \ref{rmk:seminorma-asimmetrica}, the estimate established in Lemma \ref{lm:poincare-wirtinger} displays sharp limiting behaviours with respect to the fractional differentiability parameter $s$. In other words, by taking the $\liminf$ for $s \nearrow 1$, we get the non-trivial Poincar\'e-Wirtinger inequality 
	\[
	\|u - {\rm av}\left(u; B_r(x_0)\right)\|_{L^p(B_r(x_0))} \le C_{N,p}\,r\,\|\nabla u\|_{L^p(\mathbb{R}^N)}, \quad \mbox{ for } u \in C^\infty_0(\mathbb{R}^N),
	\] 
	while taking the $\liminf$ for $s \searrow 0$, we get
	\[
	\|u - {\rm av}\left(u; B_r(x_0)\right)\|_{L^p(B_r(x_0))} \le C_{N,p}\,\|u\|_{L^p(\mathbb{R}^N)}, \quad \mbox{ for } u \in C^\infty_0(\mathbb{R}^N).
	\] 
\end{remark}

We are now able to prove a new fractional Maz'ya-Poincar\'e-type inequality. 
Its special form has been kindly suggested to the authors by Lorenzo Brasco.
\begin{lemma}[Maz'ya-Poincar\'e] \label{lm:mazya-poin}
	Let $1 \le p < \infty$ and $0 < s < 1$. Let $x_0 \in \mathbb{R}^N$ and $0 < r < R$ and let $\Sigma \subseteq  \overline{B_{r}(x_0)}$ be a compact set. For every $u \in C^\infty_0(\mathbb{R}^N)$ such that $u \equiv 0$ on $\Sigma$, we have 
	\begin{equation*}
		\frac{M}{r^{\frac{N}{p}}} \left(\widetilde{\rm cap}_{s,p}\left(\Sigma; B_{R}(x_0)\right)\right)^{\frac{1}{p}}  \|u\|_{L^p(B_r(x_0))} \le \left(\iint_{B_{R}(x_0) \times \mathbb{R}^N} \frac{|u(x) - u(y)|^p}{|x-y|^{N+s\,p}}dxdy\right)^{\frac{1}{p}},
	\end{equation*}
	where $M=M\left(N,p,R/r\right) > 0$. Moreover
\[
0 < \displaystyle \lim_{R/r \to 1} \frac{M\left(N,p, R/r\right)}{\left(R/r - 1\right)^{\frac{N}{p} + 1}} < \infty, \quad \mbox{ and } \quad 	0 < \lim_{R/r \to \infty}\left(\frac{R}{r}\right)^{\frac{N}{p}}\,M\left(N,p, \frac{R}{r}\right) < \infty.
\]
\end{lemma}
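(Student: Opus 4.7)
By the scaling relations in \eqref{scaling-frequency}, it suffices to prove the statement for $r=1$ and $R=\tau>1$, tracking the dependence of $M$ on $\tau$. Write
$$
I:=\iint_{B_\tau\times\mathbb{R}^N}\frac{|u(x)-u(y)|^p}{|x-y|^{N+s\,p}}\,dx\,dy, \qquad \bar u:=\fint_{B_1}u.
$$
The overall strategy is the classical Maz'ya decomposition adapted to the fractional setting: by Minkowski,
$$
\|u\|_{L^p(B_1)}\le\|u-\bar u\|_{L^p(B_1)}+|\bar u|\,|B_1|^{1/p},
$$
and the first summand is controlled by the new fractional Poincar\'e--Wirtinger inequality of Lemma \ref{lm:poincare-wirtinger}, yielding $\|u-\bar u\|_{L^p(B_1)}^p\le\mathcal{W}_{N,p}\,s\,(1-s)\,I$.

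The core of the proof is to bound $|\bar u|^p\,\widetilde{\rm cap}_{s,p}(\Sigma;B_\tau)\lesssim I$, with constants depending on $\tau$. Assuming $\bar u\ne 0$, introduce a Lipschitz cutoff $\eta\in{\rm Lip}_0(B_{(1+\tau)/2})$ with $\eta\equiv 1$ on $\overline{B_1}$ and $\|\nabla\eta\|_{L^\infty}\le 2/(\tau-1)$. The function $v:=\eta(\bar u-u)/\bar u$ is Lipschitz, equals $1$ on $\Sigma$, and is compactly supported in $B_\tau$; hence it is admissible for $\widetilde{\rm cap}_{s,p}(\Sigma;B_\tau)$ by Proposition \ref{prop:equivalent-cap}, giving
$$
|\bar u|^p\,\widetilde{\rm cap}_{s,p}(\Sigma;B_\tau)\le [\eta(\bar u-u)]^p_{W^{s,p}(\mathbb{R}^N)}.
$$
The increment identity
$$
\eta(x)(\bar u-u(x))-\eta(y)(\bar u-u(y))=\eta(x)(u(y)-u(x))+(\bar u-u(y))(\eta(x)-\eta(y))
$$
combined with $(a+b)^p\le 2^{p-1}(a^p+b^p)$ decomposes the right-hand side as $2^{p-1}(I_\eta+T)$, where $I_\eta\le I$ (since $0\le\eta\le 1$ is supported in $B_\tau$) and
$$
T:=\iint\frac{|\bar u-u(y)|^p\,|\eta(x)-\eta(y)|^p}{|x-y|^{N+s\,p}}\,dx\,dy.
$$

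The bulk of the work is to show $T\le C(N,p,\tau)\,I$. Splitting the integral by whether $y\in B_\tau$ or $y\notin B_\tau$: for the interior part one invokes Lemma \ref{leibniz-holder} (yielding the uniform bound $\int_{\mathbb{R}^N}|\eta(x)-\eta(y)|^p/|x-y|^{N+s\,p}\,dx\le C_{N,p}/(s\,(1-s)\,(\tau-1)^{s\,p})$), together with Lemma \ref{lm:poincare-wirtinger} applied on $B_\tau$ and a standard transfer-of-averages argument relating $\bar u$ to $\fint_{B_\tau}u$ via Jensen's inequality, producing a contribution of order $\tau^{N+s\,p}(\tau-1)^{-s\,p}\,I$; for the exterior part, $\eta(y)=0$ and the kernel enjoys the uniform separation $|x-y|\ge(\tau-1)/2$ for $x\in{\rm supp}(\eta)$. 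Combining everything yields the desired bound on $|\bar u|^p\,\widetilde{\rm cap}_{s,p}(\Sigma;B_\tau)$, and Step~1 closes the argument. The two asymptotic regimes of $M(N,p,R/r)$ then follow by carefully tracking the explicit $\tau$-dependence throughout the estimates.

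The principal obstacle lies in the exterior piece of $T$: the contribution $\iint_{{\rm supp}(\eta)\times(\mathbb{R}^N\setminus B_\tau)}|u(y)|^p\,\eta(x)^p/|x-y|^{N+s\,p}\,dx\,dy$ involves $|u(y)|^p$ for $y$ outside the window $B_\tau$ and must be controlled by the asymmetric seminorm $I$. The crucial idea is to exploit the vanishing of $u$ on $\Sigma$: writing $|u(y)|^p=|u(y)-u(z)|^p$ for $z\in\Sigma$ and using the geometric estimate $|z-y|\le C\,|x-y|$ (valid for $z\in\Sigma\subseteq B_1$, $x\in{\rm supp}(\eta)\subseteq B_{(1+\tau)/2}$ and $y\notin B_\tau$), one transfers the integration onto pairs $(z,y)$ with $z\in\Sigma$ and identifies the result as a portion of $I$.
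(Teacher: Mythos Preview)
Your overall strategy is sound and close in spirit to the paper's, but the handling of the exterior piece of $T$ has a genuine gap. For $y\notin B_\tau$ one has $\eta(y)=0$, so that piece reads
\[
\int_{\mathbb{R}^N\setminus B_\tau}|\bar u-u(y)|^p\Big(\int_{\mathrm{supp}\,\eta}\eta(x)^p\,|x-y|^{-N-sp}\,dx\Big)dy;
\]
the integrand carries $|\bar u-u(y)|^p$, \emph{not} $|u(y)|^p$ as you write. After the triangle inequality this produces a term $C(\tau)\,|\bar u|^p$ on the right-hand side---exactly the quantity you are trying to bound---and since $\widetilde{\mathrm{cap}}_{s,p}(\Sigma;B_\tau)$ can be arbitrarily small (take $\Sigma$ a single point with $sp<N$) it cannot be absorbed into the left-hand side. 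Even for the $|u(y)|^p$ contribution, your proposed fix fails: picking one $z\in\Sigma$ and comparing kernels yields $C(\tau)\int_{\mathbb{R}^N\setminus B_\tau}|u(y)-u(z)|^p\,|z-y|^{-N-sp}\,dy$, which is the value at the single point $z$ of a slice of $I$; a pointwise slice is not dominated by the double integral $I$ unless $|\Sigma|>0$, which is not assumed.

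The paper circumvents this by a different normalization and test function: it sets $\fint_{B_r}|u|^p\,dx=1$ and tests the capacity with $\psi=\big((1-u)\,\eta\big)_+$. The compact support of $\psi$ in $B_{R-\delta}$ guarantees that, after the product-rule step, the residual term is a constant times $\|1-u\|_{L^p(B_R)}$---a \emph{localized} $L^p$-norm, never an integral over $\mathbb{R}^N\setminus B_R$. This is then related, via the normalization and the reverse triangle inequality, to $\|u-\mathrm{av}(u;B_R)\|_{L^p(B_R)}$, which Lemma~\ref{lm:poincare-wirtinger} controls by $I^{1/p}$. Your argument can be repaired along the same lines: in your product-rule identity, factor $\eta$ on the variable ranging over $\mathbb{R}^N$ (add and subtract $\eta(y)(\bar u-u(x))$ instead), so that the $|\bar u-u|$ factor sits on the variable confined to $B_\tau$; the inner $y$-integral is then handled by Lemma~\ref{leibniz-holder} and the outer one is $\|\bar u-u\|_{L^p(B_\tau)}^p$, which your transfer-of-averages argument and Lemma~\ref{lm:poincare-wirtinger} do control.
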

\begin{proof}
	Without loss of generality, we can assume $x_0=0$ and 
	\begin{equation} \label{hp:normalizzazione}
		\fint_{B_r} |u|^p dx = 1.
	\end{equation} 
	We set 
	$
	\delta:= (R-r)/2.
	$
	We take $\eta \in {\rm Lip}_0(B_{R})$ such that 
	\begin{equation} \label{eqn:cut-off-prop}
		0 \leq \eta \leq 1, \quad \eta \equiv 1 \mbox{ on } \overline{B_{r}}, \quad \eta \equiv 0 \mbox{ on } B_{R} \setminus B_{R-\delta}, \quad |\nabla \eta| \leq \frac{1}{(R- \delta)-r}, 
	\end{equation}
	and we set 
	\[
	\psi:= \Big((1-u)\,\eta\Big)_{+} \in {\rm Lip}_0(B_{R}). 
	\]
	By Proposition \ref{prop:equivalent-cap}, this function is a feasible competitor for the definition of $\widetilde{\rm cap}_{s,p}$, thus 
	\[
	\begin{split}
		\left(\widetilde{\rm cap}_{s,p}\left(\Sigma; B_{R}\right)\right)^{\frac{1}{p}} &\le \left(\iint_{B_{R} \times \mathbb{R}^N} \frac{|\psi(x) - \psi(y)|^p}{|x-y|^{N+s\,p}}dxdy\right)^{\frac{1}{p}} 
		+ \left(\iint_{\left(\mathbb{R}^N \setminus B_{R}\right) \times \mathbb{R}^N} \frac{|\psi(x) - \psi(y)|^{p}}{|x-y|^{N+s\,p}}dxdy\right)^{\frac{1}{p}} \\ &:= \mathcal{A}_1 + \mathcal{A}_2,
	\end{split}
	\] 
	where we used the subadditivity of the function $\tau \mapsto \tau^{\frac{1}{p}}$. To estimate the first term, we use \cite[Lemma 2.6(b)]{Warma}, we add and subtract $\eta(x)(1-u(y))$ and apply Minkowski's inequality obtaining 
	\[
	\begin{split}
		\mathcal{A}_1 &\le \left(\iint_{B_{R} \times \mathbb{R}^N} \frac{|u(x) - u(y)|^p}{|x-y|^{N+s\,p}}dxdy \right)^{\frac{1}{p}} + \left(\int_{B_{R}} |1-u(y)|^p\left(\int_{\mathbb{R}^N} \frac{|\eta(x) - \eta(y)|^p}{|x-y|^{N+s\,p}}dx\right)dy\right)^{\frac{1}{p}} \\
		&\le \left(\iint_{B_{R} \times \mathbb{R}^N} \frac{|u(x) - u(y)|^p}{|x-y|^{N+s\,p}}dxdy \right)^{\frac{1}{p}}  + \left(\frac{c_{N,p}}{s\,(1-s)}\right)^{\frac{1}{p}} \left(\frac{1}{(R-\delta)-r}\right)^s \|1- u\|_{L^p(B_{R})},
	\end{split}
	\]
	where in the last line we also used Lemma \ref{leibniz-holder}.
	\par 
	To estimate $\mathcal{A}_2$, we observe that for $x \in B_{R- \delta}$ and $y \in \mathbb{R}^N \setminus B_{R}$ we have
	\[
	|x - y| \ge |y| - |x| \ge |y| - \left(R-\delta\right) \ge |y| - \frac{R-\delta}{R}|y| = \frac{\delta}{R}|y|,
	\]
	thus a direct computation leads to 
	\[
	\int_{\mathbb{R}^N \setminus B_{R}}\frac{dy}{|x-y|^{N+s\,p}} \le \frac{N\,\omega_N}{s\,p} \left(\frac{R}{\delta}\right)^{N+s\,p}\frac{1}{R^{s\,p}}, \qquad \mbox{ for } x \in B_{R-\delta}.
	\]
	This entails that  
	\[
	\begin{split}
		\mathcal{A}_2 = \left(\iint_{\left(\mathbb{R}^N \setminus B_{R}\right) \times \mathbb{R}^N}\frac{|\psi(x)|^p}{|x-y|^{N+s\,p}}dxdy \right)^{\frac{1}{p}} &\le \left(\int_{B_{R-\delta}} |1-u(x)|^p\,\left( \int_{\mathbb{R}^N \setminus B_{R}} \frac{dy}{|x-y|^{N+s\,p}}\right)dx\right)^{\frac{1}{p}}\\
		&\le \left(\frac{N\,\omega_N}{s\,p}\right)^{\frac{1}{p}} \left(\frac{R}{\delta}\right)^{\frac{N}{p}+s}\,\frac{1}{R^{s}} \|1 - u\|_{L^p(B_{R})}. 
	\end{split}
	\] 
	By collecting the previous estimates, we obtain 
	\begin{equation} \label{quasi-cap}
		\begin{split}
			\left(\widetilde{\rm cap}_{s,p}\left(\Sigma; B_{R}\right)\right)^{\frac{1}{p}} \le \left(\iint_{B_{R} \times \mathbb{R}^N} \frac{|u(x) - u(y)|^p}{|x-y|^{N+s\,p}}dxdy\right)^{\frac{1}{p}} 
			+ \left(\frac{1}{s\,(1-s)}\right)^{\frac{1}{p}} \frac{D}{R^s}\,\|1-u\|_{L^p(B_{R})},
		\end{split}
	\end{equation}
	where we set 
	\begin{equation*} \label{costante-D}
		D\left(N, p, s, R, r, \delta\right) := \left(\frac{R}{\delta}\right)^s\left[c_{N,p}^{\frac{1}{p}}\left(\frac{\delta}{(R-\delta)-r}\right)^s + \left(\frac{(1-s)\,N\,\omega_N}{p}\right)^{\frac{1}{p}} \left(\frac{R}{\delta}\right)^{\frac{N}{p}}\right].
	\end{equation*}
	In turn, by recalling the definition of $\delta$, we can majorize the last constant with another one which has a simpler expression 
	\begin{equation} \label{costante-E}
		D \le \frac{2R}{R-r} \left[c_{N,p}^{\frac{1}{p}} + \left(\frac{N \omega_N}{p}\right)^{\frac{1}{p}} \left(\frac{2R}{R-r}\right)^{\frac{N}{p}}\right] =: E\left(N,p, \frac{R}{r}\right).
	\end{equation}
	To estimate the $L^p-$norm appearing in the second term of \eqref{quasi-cap}, by the triangle inequality and by recalling \eqref{hp:normalizzazione} we get
	\[
	\begin{split}
		\|1 - u\|_{L^p(B_{R})} &\le \|1-{\rm av}(u; B_{R})\|_{L^p(B_{R})} + \|{\rm av}(u; B_{R}) - u\|_{L^p(B_{R})} \\
		&\le \frac{|B_{R}|^{\frac{1}{p}}}{|B_r|^{\frac{1}{p}}}\Big|\|u\|_{L^p(B_r)} - \|{\rm av}(u; B_{R})\|_{L^p(B_r)}\Big| + \|{\rm av}(u; B_{R}) - u\|_{L^p(B_{R})} \\
		&\le \left(\left(\frac{R}{r}\right)^{\frac{N}{p}} + 1 \right)\|u - {\rm av}(u; B_{R})\|_{L^p(B_{R})}.
	\end{split} 
	\]
	By Lemma \ref{lm:poincare-wirtinger}, we then obtain
	\[
	\begin{split}
		\|1 - u\|_{L^p(B_{R})} &\le  \left(\left(\frac{R}{r}\right)^{\frac{N}{p}} + 1 \right)\,\left(\mathcal{W}_{N,p}\,s\,(1-s)\right)^{\frac{1}{p}}\,R^{s}\,\left(\iint_{B_{R} \times \mathbb{R}^N} \frac{|u(x) - u(y)|^p}{|x-y|^{N+s\,p}}dxdy\right)^{\frac{1}{p}}.
	\end{split}
	\]
	By spending this information in \eqref{quasi-cap} and using \eqref{costante-E}, we get 
	\[
	\left(\widetilde{\rm cap}_{s,p}\left(\Sigma; B_{R}\right)\right)^{\frac{1}{p}}  \le \left[1 + \mathcal{W}_{N,p}^{\frac{1}{p}}\,\left(\left(\frac{R}{r}\right)^{\frac{N}{p}} + 1 \right)E \right]\left(\iint_{B_{R} \times \mathbb{R}^N} \frac{|u(x) - u(y)|^p}{|x-y|^{N+s\,p}}dxdy\right)^{\frac{1}{p}}.
	\]
	By recalling \eqref{hp:normalizzazione}, we eventually obtain 
	\[
	\frac{M}{r^{\frac{N}{p}}} \left(\widetilde{\rm cap}_{s,p}\left(\Sigma; B_{R}\right)\right)^{\frac{1}{p}}  \|u\|_{L^p(B_r)} \le \left(\iint_{B_{R} \times \mathbb{R}^N} \frac{|u(x) - u(y)|^p}{|x-y|^{N+s\,p}}dxdy\right)^{\frac{1}{p}},
	\]
	where we set 
	\[
	M\left(N,p, \frac{R}{r}\right):= \omega_N^{-\frac{1}{p}} \left[1 + \mathcal{W}_{N,p}^{\frac{1}{p}}\,\left(\left(\frac{R}{r}\right)^{\frac{N}{p}} + 1 \right)E \right]^{-1},
	\]
	where $E$ is given by \eqref{costante-E}, as desired. 
\end{proof}
\begin{remark}[Quality of the constants] \label{rmk:asym-maz-poin}
	For future purposes (see the proof of Lemma \ref{lm:mazya-poin2} below), by using the notation of the proof above, we record the following limiting behaviours  
	\[
	0 < \lim_{R/r \to 1} \left(\frac{R}{r} - 1\right)^{\frac{N}{p} + 1}\,E\left(N,p, \frac{R}{r}\right) < \infty, \quad \lim_{R/r \to \infty}E\left(N,p, \frac{R}{r}\right)= 2\left[C_{N,p}^{\frac{1}{p}} + \left(\frac{N \omega_N}{p}\right)^{\frac{1}{p}} 2^{\frac{N}{p}}\right].
	\]
\end{remark}

\begin{remark}[Asymptotic sharpness] \label{rmk:asym-sharp1}
	 By keeping in mind Remark \ref{rmk:seminorma-asimmetrica} and Remark \ref{rmk:asym-cap}, we infer that the estimate established in Lemma \ref{lm:mazya-poin} displays sharp limiting behaviours for $s \nearrow 1$ and $s \searrow 0$. 
\end{remark}

\subsection{Poincar\'e-Sobolev inequalities}
We now derive {\it Poincar\'e-Sobolev-type inequalities}. In other words, in place of the $L^p-$norm appearing on the left-hand side of the estimates established in Lemma \ref{lm:poincare-wirtinger} and Lemma \ref{lm:mazya-poin}, there will be the $L^q-$norm. The exponent $q$ is required to be {\it superhomogeneous} and {\it subcritical}, i.e.   
\begin{equation} \label{cond:subcritical}
	p\le q \begin{cases}
		\le p^*_s, \quad &\mbox{ if } s\,p<N,\\
		<\infty, \quad &\mbox{ if } s\,p=N,\\
		\le\infty, \quad &\mbox{ if } s\,p>N. 
	\end{cases}
\end{equation}
We state the forthcoming inequalities separately from that of Lemma \ref{lm:poincare-wirtinger} and Lemma \ref{lm:mazya-poin}, since they shows a slight difference in the asymptotic degeneracy of the constants involved.   

\begin{proposition}[Poincar\'e-Sobolev] \label{prop:poin-sobolev}
	Let $1\le p<\infty$ and $0<s<1$. We take an exponent $q$ satisfying \eqref{cond:subcritical}.  
	Let $x_0 \in \mathbb{R}^N$ and $r > 0$. For every $u\in C^\infty_0(B_r(x_0))$, we have
	\[
	\|u\|^p_{L^q(B_r(x_0))}\le \frac{2}{s\,(1-s)\,\lambda_{p,q}^s(B_1)}\,r^{s\,p-N + N\,\frac{p}{q}}\,s\,(1-s)\iint_{B_r(x_0) \times \mathbb{R}^N}\frac{|u(x) - u(y)|^{p}}{|x-y|^{N+s\,p}}dxdy. 
	\]
\end{proposition}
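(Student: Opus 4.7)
The proposition is essentially a bookkeeping identity; its content is that when $u$ is supported in $B_r(x_0)$, the ``asymmetric seminorm'' on the strip $B_r(x_0) \times \mathbb{R}^N$ and the full Gagliardo-Slobodecki\u{\i} seminorm on $\mathbb{R}^N \times \mathbb{R}^N$ are comparable with an explicit constant. The plan is to combine this observation with the scaling of $\lambda_{p,q}^s$ and its very definition, so no hard analysis is involved.

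First, I would start from the definition of the fractional principal frequency, applied on the ball $B_r(x_0)$. Since $u \in C^\infty_0(B_r(x_0))$ is an admissible competitor for $\lambda_{p,q}^s(B_r(x_0))$, we have
\[
\lambda_{p,q}^s(B_r(x_0))\,\|u\|^{p}_{L^q(B_r(x_0))} \le \iint_{\mathbb{R}^N \times \mathbb{R}^N} \frac{|u(x)-u(y)|^p}{|x-y|^{N+s\,p}}\,dxdy.
\]
By the scaling relation \eqref{scaling-frequency}, the principal frequency on $B_r(x_0)$ equals $r^{-(s\,p-N+N p/q)}\,\lambda_{p,q}^s(B_1)$.

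Next, I would exploit that $u$ vanishes identically on $\mathbb{R}^N \setminus B_r(x_0)$ to decompose the double integral according to the splitting
\[
\mathbb{R}^N \times \mathbb{R}^N = \big(B_r(x_0)\times B_r(x_0)\big) \cup \big(B_r(x_0)\times B_r(x_0)^c\big) \cup \big(B_r(x_0)^c\times B_r(x_0)\big) \cup \big(B_r(x_0)^c\times B_r(x_0)^c\big).
\]
The last piece contributes zero because $u\equiv 0$ on both factors, and the middle two pieces are equal by the symmetry of the kernel. This yields
\[
[u]^{p}_{W^{s,p}(\mathbb{R}^N)} = \iint_{B_r(x_0)\times B_r(x_0)} + 2\iint_{B_r(x_0) \times B_r(x_0)^c} \le 2 \iint_{B_r(x_0) \times \mathbb{R}^N} \frac{|u(x)-u(y)|^p}{|x-y|^{N+s\,p}}\,dxdy,
\]
where the last inequality follows by adding the missing diagonal block $B_r(x_0)\times B_r(x_0)$ once more.

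Combining the three ingredients, one reads off
\[
\|u\|^{p}_{L^q(B_r(x_0))} \le \frac{2}{\lambda_{p,q}^s(B_1)}\,r^{s\,p - N + N \frac{p}{q}} \iint_{B_r(x_0)\times \mathbb{R}^N} \frac{|u(x)-u(y)|^p}{|x-y|^{N+s\,p}}\,dxdy,
\]
which is exactly the statement (the factors $s\,(1-s)$ on the right-hand side simply cancel, having been inserted to exhibit both the asymmetric seminorm and the rescaled Poincaré constant in forms with good limiting behaviour as $s\searrow 0$ and $s\nearrow 1$). There is no real obstacle here; the only point requiring any care is noticing the factor $2$ that arises from adding back the ``missing'' diagonal block in the decomposition of $[u]^p_{W^{s,p}(\mathbb{R}^N)}$.
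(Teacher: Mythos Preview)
Your proof is correct and follows essentially the same approach as the paper: both start from the definition of $\lambda_{p,q}^s(B_r(x_0))$ combined with scaling, and then show that for $u$ supported in $B_r(x_0)$ the full seminorm $[u]^p_{W^{s,p}(\mathbb{R}^N)}$ is bounded by twice the asymmetric strip seminorm. The only cosmetic difference is that the paper splits $\mathbb{R}^N \times \mathbb{R}^N$ into $B_r(x_0)\times\mathbb{R}^N$ and $(\mathbb{R}^N\setminus B_r(x_0))\times\mathbb{R}^N$ rather than your four-block decomposition, but the arithmetic is the same.
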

\begin{proof}
	Without loss of generality, we assume $x_0=0$. Since $\lambda^s_{p,q}(B_r) > 0$, we have
	\[
	\begin{split}
		\|u\|^p_{L^q(B_r)}&\le \frac{r^{s\,p -N + N\,\frac{p}{q}} }{\lambda_{p,q}^s(B_1)}\,[u]^p_{W^{s,p}(\mathbb{R}^N)} \\
		&= \frac{r^{s\,p - N + N\,\frac{p}{q}} }{\lambda_{p,q}^s(B_1)} \left(\iint_{B_r \times \mathbb{R}^N}\frac{|u(x) - u(y)|^p}{|x-y|^{N+s\,p}}dxdy + \iint_{\left(\mathbb{R}^N \setminus B_r \right) \times \mathbb{R}^N}\frac{|u(x) - u(y)|^p}{|x-y|^{N+s\,p}}dxdy\right).
	\end{split}
	\]
	We focus on the second term. Since $u = 0$ on $\mathbb{R}^N \setminus B_r$, we have 
	\[
	\begin{split}
		\iint_{\left(\mathbb{R}^N \setminus B_r \right) \times \mathbb{R}^N}\frac{|u(x) - u(y)|^p}{|x-y|^{N+s\,p}}dxdy &= \iint_{\left(\mathbb{R}^N \setminus B_r\right) \times B_r } \frac{|u(y)|^p}{|x-y|^{N+s\,p}}dxdy \\
		&= \iint_{\left(\mathbb{R}^N \setminus B_r\right) \times B_r } \frac{|u(x) - u(y)|^p}{|x-y|^{N+s\,p}}dxdy \\
		&\le  \iint_{\mathbb{R}^N \times B_r } \frac{|u(x) - u(y)|^p}{|x-y|^{N+s\,p}}dxdy. 
	\end{split}
	\]
	The conclusion then follows by Fubini's theorem.
\end{proof}

\begin{remark}[Asymptotic sharpness] \label{rmk:asym-sharp2}
	In light of Remark \ref{rmk:seminorma-asimmetrica}, Proposition \ref{prop:asym-s-1} and Proposition \ref{prop:asym2}, we can infer that the estimate provided by Proposition \ref{prop:poin-sobolev} displays sharp limiting behaviours with respect to $s$. 
\end{remark}

As a consequence of the previous proposition and Lemma \ref{lm:poincare-wirtinger}, we get the following. 

\begin{lemma}[Poincar\'e-Sobolev-Wirtinger] \label{lm:poin-sob-wirtinger}
	Let $1\le p<\infty$ and $0<s<1$. We take an exponent $q$ satisfying \eqref{cond:subcritical}. 
	Let $x_0 \in \mathbb{R}^N$, $r > 0$ and $R > \sqrt{N}r$. For every $u \in C^\infty_0(\mathbb{R}^N)$ we have 
	\begin{equation*}\label{ineq:poin-wirtinger}
		\|u - \mathrm{av}(u; B_r(x_0))\|^{p}_{L^q(B_r(x_0))} \le W\,\frac{R^{s\,p-N + N\,\frac{p}{q} }}{s\,(1-s)\,\lambda_{p,q}^s(B_1)}\,s\,(1-s)\iint_{B_R(x_0) \times \mathbb{R}^N} \frac{|u(x) - u(y)|^p}{|x-y|^{N+s\,p}}dxdy,
	\end{equation*}
	for a constant $W = W\left(N, p, s, q, R/r\right) > 0$. More precisely, we have 
	\[
	0 < \lim_{s \to 0} W < \infty, \qquad 0< \lim_{s \to 1} W < \infty,
	\]
	and 
	\[
	0 < \lim_{R/r \to \sqrt{N}} \left(\frac{R}{r}-\sqrt{N}\right)^{s\,p}\,W\left(N, p, s, q, \frac{R}{r}\right) < \infty, \qquad	0 < \lim_{R/r \to \infty} W\left(N, p, s, q, \frac{R}{r}\right) < \infty.
	\]
\end{lemma}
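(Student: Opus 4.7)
The plan is to combine the Poincar\'e-Sobolev estimate on $B_R(x_0)$ from Proposition \ref{prop:poin-sobolev} with the Poincar\'e-Wirtinger estimate on $B_R(x_0)$ from Lemma \ref{lm:poincare-wirtinger}, linked through a cut-off argument applied to $u - \mathrm{av}(u; B_r(x_0))$.

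Assume $x_0 = 0$ without loss of generality and set $c := \mathrm{av}(u; B_r)$. Let $\delta := (R - \sqrt{N}r)/2 > 0$ and fix a smooth cut-off $\eta \in C^\infty_0(B_R)$ with $0 \le \eta \le 1$, $\eta \equiv 1$ on $B_{\sqrt{N}r + \delta}$ (which contains $B_r$, hence the r\^ole of the hypothesis $R > \sqrt{N}r$), $\eta \equiv 0$ outside $B_{R-\delta}$, and $\|\nabla \eta\|_{L^\infty(\mathbb{R}^N)} \le c_N/\delta$. The function $v := (u-c)\eta \in C^\infty_0(B_R)$ coincides with $u-c$ on $B_r$, so applying Proposition \ref{prop:poin-sobolev} to $v$ on $B_R$ yields
\[
\|u-c\|^p_{L^q(B_r)} \le \|v\|^p_{L^q(B_R)} \le \frac{2\,R^{sp-N+Np/q}}{s(1-s)\,\lambda^s_{p,q}(B_1)}\,s(1-s)\iint_{B_R \times \mathbb{R}^N}\frac{|v(x)-v(y)|^p}{|x-y|^{N+sp}}\,dxdy,
\]
reducing the problem to majorizing the asymmetric seminorm of $v$ on $B_R$ by the analogous quantity $\mathfrak{S}(u) := \iint_{B_R \times \mathbb{R}^N}|u(x)-u(y)|^p/|x-y|^{N+sp}\,dxdy$ for $u$.

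For this comparison, I would exploit the Leibniz-type identity $v(x)-v(y) = \eta(y)(u(x)-u(y)) + (u(x)-c)(\eta(x)-\eta(y))$, together with $(a+b)^p \le 2^{p-1}(a^p + b^p)$, to split the seminorm of $v$ into two pieces. The piece $\iint |\eta(y)|^p|u(x)-u(y)|^p/|x-y|^{N+sp}$ is controlled by $\mathfrak{S}(u)$ since $|\eta| \le 1$. The piece $\iint |u(x)-c|^p|\eta(x)-\eta(y)|^p/|x-y|^{N+sp}$, after Fubini and the Leibniz-type interpolation estimate of Lemma \ref{leibniz-holder} applied to $\eta$, is bounded by a constant multiple of $\frac{1}{s(1-s)\delta^{sp}}\,\|u-c\|^p_{L^p(B_R)}$. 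To close the estimate, I would control $\|u-c\|_{L^p(B_R)}$ by first passing from the mean on $B_r$ to the mean on $B_R$ via the triangle inequality together with H\"older's inequality, and then invoking Lemma \ref{lm:poincare-wirtinger} on $B_R$ to bound $\|u-\mathrm{av}(u;B_R)\|^p_{L^p(B_R)}$ by $\mathcal{W}_{N,p}\,s(1-s)\,R^{sp}\,\mathfrak{S}(u)$.

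Substituting these estimates back into the Poincar\'e-Sobolev bound for $v$ produces the desired inequality with an explicit constant of the form $W(N, p, s, q, R/r)$. I expect the main technical difficulty to be the bookkeeping needed to recover the sharp asymptotic behavior of $W$: verifying that the $1/(s(1-s))$ from Lemma \ref{leibniz-holder} cancels precisely against the $s(1-s)$ from Lemma \ref{lm:poincare-wirtinger}, so that both the $s \to 0$ and $s \to 1$ limits remain finite and positive; checking that the factor $(R/\delta)^{sp} = (2R/(R-\sqrt{N}r))^{sp}$ induced by the cut-off reproduces exactly the claimed blow-up of order $sp$ as $R/r \to \sqrt{N}^+$; and, for the $R/r \to \infty$ regime, absorbing the polynomial $(R/r)^{N/p}$ factor coming from the triangle-inequality comparison of the two means, which will presumably require a more refined argument (e.g.\ a telescoping estimate across intermediate concentric balls) to secure the claimed finite positive limit.
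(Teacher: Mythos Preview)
Your overall strategy---cut-off, then Proposition \ref{prop:poin-sobolev} on $B_R$, then the Leibniz splitting with Lemma \ref{leibniz-holder}, then Lemma \ref{lm:poincare-wirtinger} on $B_R$---is exactly the paper's. The one substantive difference is your choice of centering constant: you set $v=(u-c)\eta$ with $c=\mathrm{av}(u;B_r)$, whereas the paper sets $\psi=(u-m)\eta$ with $m=\mathrm{av}(u;B_R)$. This single swap is what resolves the $R/r\to\infty$ issue you flag at the end, and no telescoping is needed.

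Concretely: with $m=\mathrm{av}(u;B_R)$, the term $\|u-m\|_{L^p(B_R)}$ is controlled \emph{directly} by Lemma \ref{lm:poincare-wirtinger}, so no $(R/r)^{N/p}$ factor ever appears in the seminorm estimate for $\psi$. The price is that on $B_r$ you now have $\psi=u-m$ rather than $u-c$, so you must pass from $\|u-c\|_{L^q(B_r)}$ to $\|u-m\|_{L^q(B_r)}$. But this is cheap: for any constant $\xi$ and any $q\ge 1$,
\[
\|u-\mathrm{av}(u;B_r)\|_{L^q(B_r)}\le \|u-\xi\|_{L^q(B_r)}+|B_r|^{1/q}\Big|\fint_{B_r}(u-\xi)\Big|\le 2\,\|u-\xi\|_{L^q(B_r)},
\]
so $\|u-c\|_{L^q(B_r)}\le 2\|u-m\|_{L^q(B_r)}\le 2\|\psi\|_{L^q(B_R)}$, a factor independent of $R/r$. (The paper routes this step through cubes $Q_r\subset B_{\sqrt{N}r}$ and a reference to Giusti, which accounts for the appearance of $\sqrt{N}$ in the hypothesis $R>\sqrt{N}r$; but the elementary inequality above already suffices.) With this modification your argument yields an explicit $W$ of the form $C_{N,p,q}\bigl[1+C_{N,p}'\,(2R/(R-\sqrt{N}r))^{s}\bigr]^p$, matching the paper's formula \eqref{esplicita0}, and all four asymptotic claims follow by inspection.
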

\begin{proof}
	Without loss of generality, we assume $x_0=0$.
	We take $0 < \delta < R - \sqrt{N}r.$ Let $\eta \in {\rm Lip}_0(B_{R})$ be such that 
	\begin{equation*} \label{wirtinger:cut-off-prop}
		0 \leq \eta \leq 1, \quad \eta \equiv 1 \mbox{ on } \overline{B_{\sqrt{N}r}}, \quad \eta \equiv 0 \mbox{ on } B_{R} \setminus B_{R-\delta}, \quad |\nabla \eta| \leq \frac{1}{\left(R- \delta\right) - \sqrt{N}r}. 
	\end{equation*}
	We set 
	\[
	\psi:= \eta\,\left(u - m \right)\in {\rm Lip}_0(B_{R}), \quad \mbox{ with }\,\, m := {\rm av}(u; B_{R}).
	\]
	We have 
	\[
	\begin{split}
		\|u - {\rm av}(u; B_r)\|_{L^q(B_r)}^p &\le 	\|u - {\rm av}(u; B_r)\|_{L^q(Q_r)}^p \\
		&\le \|u - {\rm av}(u; Q_r)\|^p_{L^q(Q_r)} + \|{\rm av}(u; B_r) - {\rm av}(u; Q_r)\|^p_{L^q(Q_r)} := \mathcal{A}_1 + \mathcal{A}_2.
	\end{split}
	\]
	The second term can be estimated in terms of the first one. Indeed, a routine majorization yields
	\[
	|{\rm av}(u; B_r) - {\rm av}(u; Q_r)|^q \le \Bigg|\fint_{B_r} \left(u - {\rm av}(u; Q_r)\right)dx\Bigg|^q \le \fint_{B_r}|u - {\rm av}(u; Q_r)|^q\,dx.
	\]
	By integrating over $Q_r$ and raising to the power $p/q$, we get 
	\[
	\mathcal{A}_2 \le \left(\frac{|Q_r|}{|B_r|}\right)^{\frac{p}{q}} \|u - {\rm av}(u; Q_r)\|^p_{L^q(Q_r)}. 
	\]
	Thus
	\begin{equation} \label{fesso2}
		\begin{split}
			\|u - {\rm av}(u; B_r)\|_{L^q(B_r)}^p &\le \left(1 + \left(\frac{|Q_r|}{|B_r|}\right)^{\frac{p}{q}} \right) \|u - {\rm av}(u; Q_r)\|^p_{L^q(Q_r)}\\ &=  \left(1 + \left(\frac{|Q_r|}{|B_r|}\right)^{\frac{p}{q}} \right) \mathcal{A}_1. 
		\end{split}
	\end{equation}
	By \cite[Remark 2.1]{Gi}\footnote{More precisely, take $\xi = m$ in the last formula of \cite[page 48]{Gi}.}, we have 
	\[
	\begin{split}
		\mathcal{A}_1 = \|u - {\rm av}(u; Q_r)\|_{L^q(Q_r)}^p &\le 2^{(q+1)\,\frac{p}{q}}\|u - {\rm av}(u; B_{\sqrt{N}r})\|^{p}_{L^q(B_{\sqrt{N}\,r})}\\
		&\le 2^{(q+1)\frac{p}{q}}\|\psi\|_{L^{q}(B_{R})}^{p} \\
		&\le \frac{2^{(q+1)\,\frac{p}{q}+1}}{\lambda_{p,q}^s(B_1)} R^{s\,p - N + N\,\frac{p}{q}} \iint_{B_{R} \times \mathbb{R}^N} \frac{|\psi(x) - \psi(y)|^p}{|x - y|^{N+s\,p}}dxdy
	\end{split}
	\]
	where in the last line we used  Proposition \ref{prop:poin-sobolev}. We need to estimate the last integral. By adding and subtracting $\eta(x)\,u(y)$ and by using Minkowski's inequality, we get 
	\[
	\begin{split}
		&\left(\iint_{B_{R} \times \mathbb{R}^N} \frac{|\psi(x) - \psi(y)|^p}{|x-y|^{N+s\,p}} \,dxdy\right)^{\frac{1}{p}} \\
		&\le \left(\iint_{B_{R} \times \mathbb{R}^N}\frac{|u(x) - u(y)|^p}{|x-y|^{N+s\,p}}dxdy \right)^{\frac{1}{p}} + \left(\int_{B_{R}}|u(y) - m|^p\left(\int_{\mathbb{R}^N} \frac{|\eta(x) - \eta(y)|^{p}}{|x-y|^{N+s\,p}}dx\right) dy\right)^{\frac{1}{p}}\\
		&\le  \left(\iint_{B_{R} \times \mathbb{R}^N}\frac{|u(x) - u(y)|^p}{|x-y|^{N+s\,p}}dxdy \right)^{\frac{1}{p}} + \left(\frac{c_{N,p}}{s\,(1-s)}\right)^{\frac{1}{p}} \left(\frac{1}{(R - \delta)-\sqrt{N}\,r}\right)^s \|u - m\|_{L^p(B_{R})},
	\end{split}
	\]
	where in the last line we also used Lemma \ref{leibniz-holder}. To estimate the second term in the last inequality we use Lemma \ref{lm:poincare-wirtinger}, this leads to 
	\[
	\begin{split}
		&\left(\iint_{B_{R} \times \mathbb{R}^N} \frac{|\psi(x) - \psi(y)|^p}{|x-y|^{N+s\,p}} \,dxdy\right)^{\frac{1}{p}} \\
		&\le \left[1+\left(C_{N,p}\,\mathcal{W}_{N,p}\right)^{\frac{1}{p}} \left(\frac{R}{(R- \delta)-\sqrt{N}\,r}\right)^s\right] \left(\,\iint_{B_{R} \times \mathbb{R}^N} \frac{|u(x) - u(y)|^p}{|x-y|^{N+s\,p}}dxdy\right)^{\frac{1}{p}}
	\end{split}
	\] 
	By collecting the previous estimates, we get 
	\[
	\mathcal{A}_1 \le \frac{C}{\lambda_{p,q}^s(B_1)}\,R^{\frac{p}{q}\,N + s\,p - N}\,\iint_{B_{R} \times \mathbb{R}^N} \frac{|u(x) - u(y)|^p}{|x-y|^{N+s\,p}}dxdy
	\] 
	where we set 
	\[
	C = C\left(N,p,s, q, R, r, \delta\right) :=  2^{(q+1)\,\frac{p}{q} + 1} \left[1+\left(c_{N,p}\,\mathcal{W}_{N,p}\right)^{\frac{1}{p}} \left(\frac{R}{(R - \delta) - \sqrt{N}r}\right)^s\right]^p.
	\]
	We fix now 
	$
	\delta = (R - \sqrt{N}r)/2.
	$ 
	By recalling \eqref{fesso2}, we eventually get 
	\[
	\|u - {\rm av}(u; B_r)\|^p_{L^q(B_r)} \le \frac{W}{\lambda_{p,q}^s(B_1)}\,R^{s\,p - N + N\,\frac{p}{q}}\,\iint_{B_{R} \times \mathbb{R}^N} \frac{|u(x) - u(y)|^p}{|x-y|^{N+s\,p}}dxdy,
	\]
	where we set 
	\begin{equation} \label{esplicita0}
		W = W\left(N, p, s, q, \frac{R}{r}\right):=  \left(1+\omega_N^{-\frac{p}{q}}\right)  2^{(q+1)\frac{p}{q} + 1} \left[1+\left(C_{N,p}\mathcal{W}_{N,p}\right)^{\frac{1}{p}} \left(\frac{2R}{R - \sqrt{N}r}\right)^s\right]^p,
	\end{equation}
	as desired.
\end{proof}

\begin{remark}[Asymptotic sharpness]
	By recalling \eqref{esplicita0} and by arguing as in Remark \ref{rmk:asym-sharp2}, we infer that the estimate provided by Lemma \ref{lm:poin-sob-wirtinger} displays sharp limiting behaviours in $s$. 
\end{remark}

We are now able to prove the following important inequality, which is the main result of this section and one of the cornerstone in the proof of Theorem \ref{thm:lower-bound}. For a comment on the relevant constant, we refer to Remark \ref{asym-costante-mps}.  

\begin{lemma}[Maz'ya-Poincar\'e-Sobolev] \label{lm:mazya-poin2}
	Let $1 \le p < \infty$ and $0 < s < 1$. We take an exponent $q$ satisfying \eqref{cond:subcritical}. Let $x_0 \in \mathbb{R}^N$, $r > 0$ and $R > \sqrt{N}r$. Let $\Sigma \subseteq  \overline{B_{r}(x_0)}$ be a compact set. For every $u \in C^\infty_0(\mathbb{R}^N)$ such that $u \equiv 0$ on $\Sigma$, we have 
	\begin{equation*}
		\frac{M}{r^{\frac{N}{q}}}\,\left(\widetilde{\rm cap}_{s,p}\left(\Sigma; B_{R}(x_0)\right)\right)^{\frac{1}{p}} \,\|u\|_{L^q(B_r(x_0))} \le \left(\iint_{B_{R}(x_0) \times \mathbb{R}^N} \frac{|u(x) - u(y)|^p}{|x-y|^{N+s\,p}}dxdy\right)^{\frac{1}{p}},
	\end{equation*}
	where $M=M\left(N, p, s, q, R/r\right) > 0$. Moreover
	\[
	M\left(N, p, s, q, \frac{R}{r}\right) \sim 1, \quad \mbox{ as } s \searrow 0 \mbox{ and } s \nearrow 1,
	\]
	and 
	\[
	0 < \lim_{R/r \to \sqrt{N}} \frac{M\left(N, p, s, q, R/r\right)}{\left(R/r - \sqrt{N}\right)^s} < \infty, \qquad 0 < \lim_{R/r \to \infty}\left(\frac{R}{r}\right)^{\frac{N}{q}}\,M\left(N, p, s, q, \frac{R}{r}\right) < \infty.
	\] 
\end{lemma}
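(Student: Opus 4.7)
The strategy is to mirror the argument of Lemma \ref{lm:mazya-poin}, but replace the use of the Poincar\'e--Wirtinger inequality (Lemma \ref{lm:poincare-wirtinger}) with the Poincar\'e--Sobolev--Wirtinger inequality (Lemma \ref{lm:poin-sob-wirtinger}), and insert a H\"older step to pass from the natural $L^p$ norm produced by the cut-off estimate to the $L^q$ norm dictated by the PSW bound.

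First I would reduce to $x_0=0$ and to the case $u\ge 0$ (by replacing $u$ with $|u|$, which still vanishes on $\Sigma$, has the same $L^q$ norm, and whose Gagliardo--Slobodecki\u{\i} seminorm is no larger). Next I would normalize $\fint_{B_r}u^q\,dx=1$, so that $\|u\|_{L^q(B_r)}=\omega_N^{1/q}r^{N/q}$. Since $R>\sqrt{N}r$, I can pick an intermediate radius
\[
\rho:=\frac{1}{2}\left(r+\frac{R}{\sqrt{N}}\right),\qquad r<\rho<\frac{R}{\sqrt{N}},
\]
and set $\delta:=(R-\rho)/2$. I construct a Lipschitz cut-off $\eta$ with $0\le\eta\le 1$, $\eta\equiv 1$ on $\overline{B_{\rho}}$, $\eta\equiv 0$ outside $B_{R-\delta}$, and $|\nabla\eta|\le 1/(R-\delta-\rho)$. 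As in the proof of Lemma \ref{lm:mazya-poin}, I then define
\[
\psi:=\big((1-u)\eta\big)_+\in{\rm Lip}_0(B_R),
\]
which takes the value $1$ on $\Sigma$ (since $u\equiv 0$ and $\eta\equiv 1$ there) and is hence an admissible competitor for $\widetilde{\rm cap}_{s,p}(\Sigma;B_R)$ by Proposition \ref{prop:equivalent-cap}.

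The next step is to run the splitting $(\widetilde{\rm cap}_{s,p})^{1/p}\le\mathcal{A}_1+\mathcal{A}_2$ used in Lemma \ref{lm:mazya-poin}, bounding $\mathcal{A}_1$ by the triangle inequality and Lemma \ref{leibniz-holder} applied to $\eta$, and $\mathcal{A}_2$ via the standard tail-kernel estimate on $\mathbb{R}^N\setminus B_R$. Both error terms are controlled by $\|1-u\|_{L^p(B_{R-\delta})}$ with explicit constants of the form $(s(1-s))^{-1/p}$ times an algebraic factor in $R/r$. Since $B_{R-\delta}\subseteq B_{R/\sqrt{N}}$ by construction, I then pass from $L^p$ to $L^q$ by H\"older (valid since $q\ge p$ and the ball has finite measure):
\[
\|1-u\|_{L^p(B_{R-\delta})}\le |B_{R-\delta}|^{\frac{1}{p}-\frac{1}{q}}\,\|1-u\|_{L^q(B_{\rho'})},
\]
with $\rho':=\max\{R-\delta,\rho\}$ still satisfying $\rho'<R/\sqrt{N}$ (so Lemma \ref{lm:poin-sob-wirtinger} applies to $B_{\rho'}$).

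Now I control $\|1-u\|_{L^q(B_{\rho'})}$ by the triangle inequality
\[
\|1-u\|_{L^q(B_{\rho'})}\le \|1-{\rm av}(u;B_{\rho'})\|_{L^q(B_{\rho'})}+\|u-{\rm av}(u;B_{\rho'})\|_{L^q(B_{\rho'})}.
\]
The second summand is bounded directly by Lemma \ref{lm:poin-sob-wirtinger} (with inner radius $\rho'$ and outer radius $R$, which is licit since $R>\sqrt{N}\rho'$). For the first summand, I exploit the normalization: since $u\ge 0$, the reverse triangle inequality on $L^q(B_r)$ gives
\[
|B_r|^{1/q}\bigl|1-{\rm av}(u;B_{\rho'})\bigr|=\bigl|\|u\|_{L^q(B_r)}-\|{\rm av}(u;B_{\rho'})\|_{L^q(B_r)}\bigr|\le \|u-{\rm av}(u;B_{\rho'})\|_{L^q(B_{\rho'})},
\]
and multiplying by $|B_{\rho'}|^{1/q}/|B_r|^{1/q}=(\rho'/r)^{N/q}$ yields a bound for the first summand, again in terms of $\|u-{\rm av}(u;B_{\rho'})\|_{L^q(B_{\rho'})}$. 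Feeding this into the capacity estimate and undoing the normalization produces the desired inequality with
\[
M\Bigl(N,p,s,q,\tfrac{R}{r}\Bigr)^{-1}=\omega_N^{-1/q}\Bigl[1+W^{1/p}(1+(\rho/r)^{N/q})\,\bigl(\lambda_{p,q}^s(B_1)\bigr)^{-1/p}\,\mathcal{E}\Bigr],
\]
where $\mathcal{E}$ packages the cut-off contributions from $\mathcal{A}_1,\mathcal{A}_2$ and has the same kind of $R/r$--dependence as in Remark \ref{rmk:asym-maz-poin}.

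The only genuinely delicate point, and the one I expect to be the main obstacle, is the asymptotic behavior of $M$ as $s\searrow 0$ and $s\nearrow 1$. Combining the factor $(s(1-s))^{-1/p}$ produced by Lemma \ref{leibniz-holder} with the PSW factor $(\lambda_{p,q}^s(B_1))^{-1/p}$ and the tail constant $s(1-s)$ appearing in the statement of Lemma \ref{lm:poin-sob-wirtinger}, one has to check, using Proposition \ref{prop:asym-s-1}, Proposition \ref{prop:asym2} and Remark \ref{rmk:asym-cap}, that all the singular factors cancel so that $M\sim 1$ in both limits. The limits $R/r\to\sqrt{N}$ and $R/r\to\infty$ then follow from the corresponding limits of $W$ stated in Lemma \ref{lm:poin-sob-wirtinger} together with the elementary dependence of the cut-off factor on $R/r$.
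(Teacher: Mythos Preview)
Your plan is essentially the paper's proof: both rerun the argument of Lemma~\ref{lm:mazya-poin} to reach an inequality of the form~\eqref{quasi-cap2}, then apply H\"older on the ball to pass from $L^p$ to $L^q$, split $\|1-u\|_{L^q}$ around an average, control the constant piece via the reverse triangle inequality using the normalisation $\fint_{B_r}|u|^q=1$, and finally invoke Lemma~\ref{lm:poin-sob-wirtinger}. The paper carries this out directly on $B_R$ with ${\rm av}(u;B_R)$ and no intermediate radius; your detour through $\rho,\rho'$ is unnecessary and in fact rests on a slip---the Leibniz--H\"older contribution from $\mathcal{A}_1$ (cf.\ the proof of Lemma~\ref{lm:mazya-poin}) is governed by $\|1-u\|_{L^p(B_R)}$, not $\|1-u\|_{L^p(B_{R-\delta})}$, so you cannot restrict to a ball of radius $\rho'<R/\sqrt{N}$ as you propose. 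Once you drop that restriction and work on $B_R$ throughout, your sketch and the paper's proof coincide, including the asymptotic bookkeeping via Propositions~\ref{prop:asym-s-1}--\ref{prop:asym2} and the explicit formula~\eqref{esplicita} for $M$.
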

\begin{proof}
	Without loss of generality, we can assume $x_0=0$ and 
	\begin{equation} \label{hp:normalizzazione2}
		\fint_{B_r} |u|^q dx = 1.
	\end{equation} 
	We take $R > \sqrt{N}r$. By arguing as in the beginning of the proof of Lemma \ref{lm:mazya-poin}, we are led to 
	\begin{equation} \label{quasi-cap2}
		\begin{split}
			\left(\widetilde{\rm cap}_{s,p}\left(\Sigma; B_{R}\right)\right)^{\frac{1}{p}} \le& \left(\iint_{B_{R} \times \mathbb{R}^N} \frac{|u(x) - u(y)|^p}{|x-y|^{N+s\,p}}dxdy\right)^{\frac{1}{p}} 
			+ \left(\frac{1}{s\,(1-s)}\right)^{\frac{1}{p}} \frac{E}{R^s} \|1-u\|_{L^p(B_{R})},
		\end{split}
	\end{equation}
	where $E$ is given by \eqref{costante-E}. In order to estimate the $L^p-$norm appearing in the second term, we use this time H\"older's inequality and the triangle inequality obtaining 
	\[
	\begin{split}
		\|1- u\|_{L^p(B_R)} &\le |B_R|^{\frac{1}{p} - \frac{1}{q}} \|1-u\|_{L^q(B_R)} \\
		&\le |B_R|^{\frac{1}{p} - \frac{1}{q}} \left(\|1-{\rm av}(u; B_R)\|_{L^q(B_R)} + \|{\rm av}(u; B_R) - u\|_{L^q(B_R)}\right).
	\end{split} 
	\]
	The first term in the rightmost term can be estimated in terms of the second one. Indeed, by recalling \eqref{hp:normalizzazione2} we have 
	\[
	\begin{split}
		\|1-{\rm av}(u; B_R)\|_{L^q(B_R)} &= |B_R|^{\frac{1}{q}} |1-{\rm av}(u; B_R)| = \left(\frac{|B_R|}{|B_r|}\right)^{\frac{1}{q}} \Big|\|u\|_{L^q(B_r)}  - \|{\rm av}(u; B_R)\|_{L^q(B_r)}\Big| \\
		&\le \left(\frac{|B_R|}{|B_r|}\right)^{\frac{1}{q}} \|u - {\rm av}(u; B_R)\|_{L^q(B_R)}.
	\end{split}
	\]
	This leads to 
	\[
	\begin{split}
		\|1- u\|_{L^p(B_R)} &\le \omega_N^{\frac{1}{p} - \frac{1}{q}}R^{\frac{N}{p} - \frac{N}{q}} \left(1+ \left(\frac{R}{r}\right)^{\frac{N}{q}} \right) \|u-{\rm av}(u; B_R)\|_{L^q(B_R)}\\
		&\le \omega_N^{\frac{1}{p} - \frac{1}{q}} \left(1+ \left(\frac{R}{r}\right)^{\frac{N}{q}} \right)\,\left(\frac{W}{\lambda_{p,q}^s(B_1)}\right)^{\frac{1}{p}}\,R^{ s}\,\left(\iint_{B_R \times \mathbb{R}^N} \frac{|u(x) - u(y)|^p}{|x-y|^{N+s\,p}}dxdy\right)^{\frac{1}{p}},
	\end{split}
	\]
	where we used Lemma \ref{lm:poin-sob-wirtinger}. By spending this information in \eqref{quasi-cap2}, we get 
	\[
	\begin{split}
		\left(\widetilde{\rm cap}_{s,p}\left(\Sigma; B_{R}\right)\right)^{\frac{1}{p}} &\le \left[1+ \omega_N^{\frac{1}{p} - \frac{1}{q}}  \left(1+ \left(\frac{R}{r}\right)^{\frac{N}{q}} \right)\, \left(\frac{W}{s\,(1-s)\,\lambda_{p,q}^s(B_1)}\right)^{\frac{1}{p}} E\right] \times \\
		&\quad \times \left(\iint_{B_R \times \mathbb{R}^N} \frac{|u(x) - u(y)|^p}{|x-y|^{N+s\,p}}dxdy\right)^{\frac{1}{p}}.
	\end{split}
	\]
	We set
	\begin{equation} \label{esplicita}
		M = M\left(N, p, s, q, \frac{R}{r}\right) := \omega_N^{-\frac{1}{q}}\left[1+ \omega_N^{\frac{1}{p} - \frac{1}{q}}  \left(1+ \left(\frac{R}{r}\right)^{\frac{N}{q}} \right)\, \left(\frac{W}{s\,(1-s)\,\lambda_{p,q}^s(B_1)}\right)^{\frac{1}{p}} E\right]^{-1},
	\end{equation}
	and, by recalling \eqref{hp:normalizzazione2}, we get the desired estimate. By Propositions \ref{prop:asym-s-1}-\ref{prop:asym2}, we infer the claimed asymptotic behaviours of $M$ with respect to $s$. Eventually, by Lemma \ref{lm:poin-sob-wirtinger} and Remark \ref{rmk:asym-maz-poin}, we also infer the limiting behaviours of $M$ as $R/r \to \sqrt{N}$ and $R/r \to \infty$. 
\end{proof}

\begin{remark}[Asymptotic sharpness] \label{asym-costante-mps}
 By Remark \ref{rmk:seminorma-asimmetrica} and Remark \ref{rmk:asym-cap}, we infer that the estimate provided by Lemma \ref{lm:mazya-poin2} displays sharp limiting behaviours as $s \searrow 0$ and $s \nearrow 1$. 
\end{remark}

\section{Proof of the Main Theorems} \label{sec:5}
Armed with the results of the previous sections, we can now prove the Main Theorems of this paper: Theorem \ref{thm:lower-bound} and Theorem \ref{thm:upper-bound}. 
\subsection{Proof of the lower bound}
\begin{proof}[Proof of Theorem \ref{thm:lower-bound}]
	We can assume $R^s_{p, \gamma}(\Omega)< +\infty$, otherwise there is nothing to prove. Let $r > R^s_{p, \gamma}(\Omega)$, for every $x_0 \in \mathbb{R}^N$ we have \begin{equation} \label{ipotesi-gamma-fat}
		\widetilde{\mathrm{cap}}_{s,p}\left(\overline{B_r(x_0)} \setminus \Omega; B_{2\,r}(x_0)\right)  > \gamma\,\widetilde{\mathrm{cap}}_{s,p}\left(\overline{B_1}; B_{2}\right)\,r^{N-s\,p}.
	\end{equation}
	We introduce now two covering families for the whole $\mathbb{R}^N$ made of balls with equidistant centers. 
	Define	\[
	D_{N}:= \left\{{\bf i} = (i_1,...,i_N)\in \mathbb{Z}^N :\, i_k \mbox{ is odd for } k = 1, \ldots, N\right\},  
	\]
	and
	\[
	Z_r := \left\{{\bf p} = (p_1, \ldots, p_N) \in \mathbb{R}^N :\, {\bf p} = \frac{r}{\sqrt{N}}\,{\bf i} \mbox{ with }  {\bf i} \in D_N\right\}.
	\]
	We set \[
	B_{\bf p} := B_{r} + {\bf p}, \qquad  \widetilde{B_{\bf p}} := B_{2\,\sqrt{N}\,r} + {\bf p}, \qquad  \mbox{ for } {\bf p} \in Z_r.
	\]
	Both the families of balls $\{B_{\bf p}\}_{{\bf p} \in Z_r}$ and $\{\widetilde{B_{\bf p}}\}_{{\bf p} \in Z_r}$ cover the whole $\mathbb{R}^N$. We claim that the multiplicity of $\{\widetilde{B_{\bf p}}\}_{{\bf p} \in Z_r}$ is at most $(4N+1)^N$. To prove this, for ${\bf p} \in Z_r$ we set 
	\[
	\quad Q_{\bf p} := Q_{\frac{r}{\sqrt{N}}} + {\bf p}, \quad \widetilde{Q_{\bf p}} := Q_{\varrho} + {\bf p}, \quad \mbox{ where } \varrho := \frac{4N+1}{\sqrt{N}}\,r.
	\]
	Observe that $\widetilde{B_{{\bf p}}}$ intersects at most $(4N+1)^N$ balls of $\{\widetilde{ B_{\bf q}}\}_{{\bf q} \in Z_r}.$ Indeed, if
	\[
	x = (x_1, \ldots, x_N) \in \widetilde{B_{\bf p}} \cap \widetilde{B_{\bf q}}, \quad \mbox{ with } {\bf p} = \frac{r}{\sqrt{N}}{\bf i}\,\, \mbox{ and }\,\, {\bf q} = \frac{r}{\sqrt{N}}{\bf j},
	\]
	for some different ${\bf i}, {\bf j} \in D_N$, we have 
	\[
	|q_k - p_k| \le \left|{\bf p} - {\bf q}\right| \le \left|{\bf p} - x \right| + \left| x -  {\bf q}\right| < 4\sqrt{N}r, \qquad \mbox{ for } k=1, \ldots N. 
	\]
	This entails that 
	$
	Q_{\bf q} \subseteq \widetilde{Q_{\bf p}},
	$
	since for every $y = (y_1, \ldots, y_N) \in Q_{\bf q}$ we have 
	\[
	|y_k - p_k| \le |y_k - q_k| + |q_k - p_k| < \frac{r}{\sqrt{N}} + 4\sqrt{N}r = \varrho, \quad \mbox{ for } k=1, \ldots, N. 
	\]
	Our claim then follows by observing that \[
	{\rm card}\Big(Z_r \cap \widetilde{Q_{\bf p}}\Big) \le (4N+1)^N.
	\] 
	We can now conclude the proof. Let $u \in C^\infty_0(\Omega)$ be extended by zero on the whole $\mathbb{R}^N$. For every ${\bf p} \in Z_r$, we have $u \equiv 0$ on $\overline{B_{\bf p}} \setminus \Omega$. By using Lemma \ref{lm:mazya-poin2} with $R = 2\,\sqrt{N}\,r$, we get
	\begin{equation} \label{quasi-ci-siamo}
		\begin{split}
			(4N+1)^N \iint_{\mathbb{R}^N \times \mathbb{R}^N} \frac{|u(x) - u(y)|^p}{|x-y|^{N+s\,p}}\,dxdy &\ge \sum_{{\bf p}\in Z_r} \iint_{\widetilde{B_{\bf p}} \times \mathbb{R}^N} \frac{|u(x) - u(y)|^p}{|x-y|^{N+s\,p}}dxdy \\
			&\ge \frac{M^p}{r^{N\,\frac{p}{q}}} \sum_{{\bf p}\in Z_r} \widetilde{\rm cap}_{s,p}\left(\overline{B_{\bf p}} \setminus \Omega; \widetilde{B_{\bf p}}\right) \|u\|^p_{L^q(B_{{\bf p}})},
		\end{split}
	\end{equation}
	where $M = M\left(N,p, s, q\right)$ is given by Lemma \ref{lm:mazya-poin2}. By using Proposition \ref{prop:cap-wrt-balls}, for every ${\bf p} \in Z_r$, there exists a constant $\mathscr{C} = \mathscr{C}\left(N,p, s\right) > 0$, independent from ${\bf p}$, such that 
	\[
	\widetilde{\rm cap}_{s,p}\left(\overline{B_{\bf p}} \setminus \Omega; \widetilde{B_{\bf p}}\right) \ge \frac{1}{\mathscr{C}}\,\widetilde{\rm cap}_{s,p}\left(\overline{B_{\bf p}} \setminus \Omega; B^*_{\bf p}\right), \quad \mbox{ where } B^*_{\bf p}:= B_{2\,r} + {\bf p}.
	\]
	By spending this information in \eqref{quasi-ci-siamo}, we obtain that 
	\[
	\begin{split}
		(4\,N+1)^N\,\iint_{\mathbb{R}^N \times \mathbb{R}^N} \frac{|u(x) - u(y)|^p}{|x-y|^{N+s\,p}}\,dxdy &\ge \frac{1}{r^{N\,\frac{p}{q}}}\,\frac{M^p}{\mathscr{C}}\, \sum_{{\bf p}\in Z_r} \widetilde{\rm cap}_{s,p}\left(\overline{B_{\bf p}} \setminus \Omega; B^*_{\bf p}\right) \|u\|^p_{L^q(B_{{\bf p}})} \\
		&\ge \frac{\gamma}{r^{s\,p - N + N\,\frac{p}{q}}}\,\frac{M^p}{\mathscr{C}}\,\widetilde{{\rm cap}}_{s,p}\left(\overline{B_1}; B_2\right)\,\sum_{{\bf p}\in Z_r} \|u\|^p_{L^q(B_{{\bf p}})},
	\end{split}
	\]
	where in the last line we used \eqref{ipotesi-gamma-fat}.
	Since $q \ge p$, the function $\tau \mapsto \tau^{p/q}$ is subadditive, so
	\[
	\sum_{{\bf p}\in Z_r} \|u\|^p_{L^q(B_{{\bf p}})} \ge \left(\sum_{{\bf p}\in Z_r} \|u\|^q_{L^q(B_{\bf p})}\right)^{\frac{p}{q}} \ge \|u\|^{p}_{L^q(\Omega)}.
	\]
	By the last two inequalities and by the arbitrariness of $u$ and $r$, we get the desired result with
	\[
	\sigma_{N, p, s, q} = \dfrac{M^{{p}}}{\mathscr{C}}\,\frac{\widetilde{\mathrm{cap}}_{s,p}\left(\overline{B_1};B_2\right)}{(4N+1)^N},
	\]
	where $M$ and $\mathscr{C}$ are as before. Eventually, by recalling \eqref{costante-cap-wrt-balls} and by Remark \ref{rmk:asym-cap}, we infer the claimed limiting behaviours of $\sigma_{N, p, s, q}$, as $s \searrow 0$ and $s \nearrow 1$.
\end{proof}

\subsection{Proof of the upper bound} \label{sec:proof-ub}

\begin{proof}[Proof of Theorem \ref{thm:upper-bound}]
	Let $0 < \gamma_0 \le 1$ to be chosen later, we take $0<\gamma<\gamma_0$. Let $B_r(x_0)$ be a ball such that 
	\begin{equation}\label{gamma_cap p>1}
		\widetilde{\mathrm{cap}}_{s,p}\left(\overline{B_r\left(x_0\right)} \backslash \Omega ; B_{2 r}\left(x_0\right)\right) \leq \gamma\,\widetilde{\mathrm{cap}}_{s,p}\left(\overline{B_r\left(x_0\right)} ; B_{2 r}\left(x_0\right)\right).
	\end{equation}
	We look for a constant $\mathcal{C} = \mathcal{C}(N,p, s,\gamma) > 0$ such that
	\begin{equation} \label{claim_upper_bound_1 p>1}
		\lambda^s_{p, q}(\Omega) \leq \frac{\mathcal{C}}{r^{s\,p - N + N\,\frac{p}{q}}}. 
	\end{equation}
	The claimed result will eventually follow by taking the supremum over all the admissible $r$. In particular, if $R^s_{p, \gamma}(\Omega) = +\infty$ the last inequality entails that $\lambda^s_{p, q}(\Omega) = 0$. 
	\par
	Without loss of generality, we assume $x_0=0$. For simplicity, we set $F=\overline{B_r} \setminus \Omega$. For every $\delta>0$, we take a function $\varphi_\delta\in \operatorname{Lip}_0\left(B_{2 r}\right)$ such that 
	\begin{equation} \label{cap-approximation p>1}
		0\leq \varphi_\delta \leq 1, \qquad \varphi_\delta=1 \text { on } F, \qquad [\varphi_\delta]^p_{W^{s,p}(\mathbb{R}^N)}\leq \delta\,\widetilde{\operatorname{cap}}_{s,p}\left(\overline{B_r};B_{2r}\right)+\widetilde{\operatorname{cap}}_{s, p}\left(F ; B_{2 r}\right).
	\end{equation} 
	Such a function exists, in light of Proposition \ref{prop:equivalent-cap}. Fixed $0 < \varepsilon < 1/2$, we take the cut-off function $\eta \in \mathrm{Lip}_0(B_r)$ given by 
	\[
	\eta(x) = \min\left\{\left(\dfrac{(1-\varepsilon)\,r - |x|}{(1 - \varepsilon)\,r - (1- 2\,\varepsilon)\,r}\right)_{+},\,\, 1\right\}, \qquad \mbox{ for } x \in \mathbb{R}^N.
	\]
	In particular
	\begin{equation} \label{cut-off-upper-bound}
		0 \leq \eta \leq 1, \quad \eta \equiv 1 \mbox{ on } B_{(1-2\varepsilon)\,r}, \quad  \eta \equiv 0 \mbox{ on } \mathbb{R}^N \setminus B_{(1-\varepsilon)\,r} \quad \mbox{ and } \quad \|\nabla \eta\|_{L^\infty(\mathbb{R}^N)} = \frac{1}{\varepsilon\,r}.
	\end{equation}
	We use $\psi := (1-\varphi_\delta)\,\eta/ \|(1-\varphi_\delta)\,\eta\|_{L^p(\Omega)}$ as a test function in the definition of $\lambda^s_{p, q}(\Omega)$. This is an admissible function. Indeed, by construction $\psi \in \mathrm{Lip}_0(\mathbb{R}^N)$ and \[
	1 - \varphi_\delta \equiv 0 \quad \mbox{ on } F = \overline{B_r} \setminus \Omega, \qquad \mbox{ and } \qquad \eta \equiv 0 \quad \mbox{ on } \mathbb{R}^N \setminus B_r.
	\]
	This entails that
	$
	\psi \equiv 0 \mbox{ on } \mathbb{R}^N \setminus \Omega
	$
	and so $\psi \in \widetilde{W}^{s,p}_0(\Omega) \cap L^q(\Omega)$, by Lemma \ref{lm:brezis-type}.
	By Minkowski's inequality
	\begin{equation} 
		\label{test+leibniz p>1}
		\begin{split}
			\lambda^s_{p, q}(\Omega)  \left(\int_{B_{(1-2\varepsilon)\,r}}(1-\varphi_\delta)^q\,dx \right)^{\frac{p}{q}} &\le 2^{p-1} \left([\eta]^p_{W^{s,p}(\mathbb{R}^N)} + [\varphi_{\delta}]^p_{W^{s,p}(\mathbb{R}^N)}  \right)\\
			&\le  2^{p-1}\left([\eta]^p_{W^{s,p}(\mathbb{R}^N)} + (\delta+\gamma)\,\widetilde{\operatorname{cap}}_{s,p}\left(\overline{B_r};B_{2r}\right)\right),
		\end{split}
	\end{equation}
	where we used \eqref{gamma_cap p>1} and \eqref{cap-approximation p>1}. 
	By \cite[Corollary 2.2]{BPS}, \cite[Proposition 4.2]{BLP} and by \eqref{cut-off-upper-bound}, we have
	\begin{equation*} \label{norm-split p>1}
		\begin{split} 
			[\eta]_{W^{s, p}(\mathbb{R}^N)}^p &\leq \frac{C_{N,p}}{s\,(1-s)}\,\|\eta\|_{L^p(\mathbb{R}^N)}^{(1-s)\,p}\,\|\nabla \eta\|_{L^p(\mathbb{R}^N)}^{s\,p}\\
			&\leq \frac{C_{N,p}}{s\,(1-s)}\,\left(\frac{1}{\varepsilon\,r}\right)^{s\,p}\,|B_r|^{(1-s)}\,|B_{r(1-\varepsilon)}\setminus B_{r(1-2\varepsilon)}|^{s},
		\end{split}
	\end{equation*}
	for some $C_{N,p} > 0$. By Bernoulli's inequality, we further have 
	\begin{equation} \label{bernoulli}
		(1-\varepsilon)^N - (1-2\,\varepsilon)^N \le 1 - \left(1-2\,\varepsilon\right)^N \le 2\,\varepsilon\,N, \qquad \mbox{ for } 0 < \varepsilon < \frac{1}{2},
	\end{equation}
	thus by spending this information, we get
	\[
	[\eta]^p_{W^{s,p}(\mathbb{R}^N)} \leq  \omega_N\, \frac{(2N)^s}{\varepsilon^{s\,(p-1)}}\,\frac{C_{N, p}}{s\,(1-s)}\,r^{N-s\,p}.
	\]
	Then, we can majorize \eqref{test+leibniz p>1} obtaining
	\begin{equation} \label{ineq:stima-dallalto p>1} 
		\lambda^s_{p, q}(\Omega) \left(\fint_{B_{(1-2 \varepsilon)r}}\left(1-\varphi_\delta\right)^q\,dx \right)^{\frac{p}{q}} \leq \frac{\mathscr{A}}{r^{s\,p - N + N\,\frac{p}{q}}},
	\end{equation}
	where we set 
	\begin{equation} \label{costante-ausiliaria}
		\begin{split}
			\mathscr{A} &= \mathscr{A}(N,p,s, q, \gamma, \varepsilon, \delta) \\
			&= \frac{2^{p-1}\,\widetilde{\operatorname{cap}}_{s,p}\left(\overline{B_1};B_{2}\right)}{|B_{1-2\,\varepsilon}|^{\frac{p}{q}}}\left(\frac{C_{N, p}}{s\,(1-s)\,\widetilde{\operatorname{cap}}_{s,p}\left(\overline{B_1};B_{2}\right)} \frac{\omega_N\,(2N)^s}{\varepsilon^{s\,(p-1)}} +(\delta + \gamma)\right). 
		\end{split}
	\end{equation}
	By Jensen's inequality, we can bound from below the leftmost term in the last inequality obtaining 
	\begin{equation} \label{quasi-quasi}
		\begin{split}
			\lambda^s_{p, q}(\Omega) \left(1-\fint_{B_{(1-2 \varepsilon)r}}\varphi_\delta\, d x\right)^p \leq \frac{\mathscr{A}}{r^{s\,p - N + N\,\frac{p}{q}}}. 
		\end{split}
	\end{equation}
	In order to conclude, we need to prove that there exists $0 < \varepsilon_0 < 1/2$ depending on $\gamma, N, p$ and $s$ such that  
	\begin{equation}\label{1-media}
		\left(1-\fint_{B_{(1-2 \varepsilon)r}}\varphi_\delta\, d x \right)^p \geq\frac{1}{C},
	\end{equation}
	for every $0 < \varepsilon \le \varepsilon_0$. In the sequel, we discuss the cases $p > 1$ and $p=1$ separately.
	\vskip.2cm \noindent 
	{\it Case $\boxed{p > 1}$.} For $s\,p < N$, we use \eqref{ineq:cap-potential}. In combination with \eqref{gamma_cap p>1} and \eqref{cap-approximation p>1}, this yields 
	\begin{align*}\label{ineq-media_below} 
		\nonumber\left(1 - \fint_{B_{(1-2\varepsilon)r}} \varphi_\delta\,dx\right)^p &\ge \left( 1- |B_{(1-2\varepsilon)\,r}|^{\frac{s}{N}  - \frac{1}{p}} \,\mathcal{S}_{N,p,s}^{\frac{1}{p}}\,[\varphi_\delta]_{W^{s,p}(\mathbb{R}^N)}\right)^p \\
		&\ge \left(1- |B_{(1-2\varepsilon)\,r}|^{\frac{s}{N}  - \frac{1}{p}}\,\left(\mathcal{S}_{N,p,s}\,\widetilde{{\rm cap}}_{s,p}\left(\overline{B_r}; B_{2r}\right)\,(\delta + \gamma)\right)^{\frac{1}{p}} \right)^p.
	\end{align*}
	By spending the last inequality in \eqref{quasi-quasi} and by passing to the limit as $\delta \searrow 0$, we get
	\begin{equation}\label{ineq-final_upperbound}
		\begin{split}
			\frac{\mathscr{A}(N,p,s, q, \gamma, \varepsilon, 0)}{r^{s\,p - N + N\,\frac{p}{q}}} \ge \lambda_{p, q}^s(\Omega)\left(1- \frac{1}{(1-2\,\varepsilon)^{\frac{N}{p} -s }}\left(\omega_N^{\frac{p}{N}-1}\,\mathcal{S}_{N,p,s}\,\widetilde{{\rm cap}}_{s,p}\left(\overline{B_1}; B_{2}\right)\gamma\right)^{\frac{1}{p}}\right)^p,
		\end{split}
	\end{equation}
	with $\mathscr{A}$ given by \eqref{costante-ausiliaria}. 
	As announced in the beginning of the proof, we now spend the choice $\gamma_0$ by setting 
	\begin{equation} \label{scelta_gamma0}
		\gamma_0 = \gamma_0\left(N,p,s\right) :=  \min\Big\{\left(\omega_N^{\frac{p}{N}-1}\,\mathcal{S}_{N,p,s}\,\widetilde{{\rm cap}}_{s,p}\left(\overline{B_1}; B_{2}\right)\right)^{-1}, 1 \Big\}.
	\end{equation}
	and we also set 
	\begin{equation} \label{epsilon}
		\varepsilon_0 = \varepsilon_0(N, p, s, \gamma) := \frac{1}{4}\,\left(1 - \left(\frac{\gamma}{\gamma_0}\right)^{\frac{1}{N-s\,p}}\right),
	\end{equation}
	for every $0 < \gamma < \gamma_0$.
	Thus for every $ 0 < \gamma < \gamma_0$ and $0 < \varepsilon \le \varepsilon_0$, by considering the second factor in the rightmost term of \eqref{ineq-final_upperbound}, we have   
	\[
	\begin{split}
		\left(1- \frac{1}{(1-2\,\varepsilon)^{\frac{N}{p} -s }}\,\left(\omega_N^{\frac{p}{N}-s}\mathcal{S}_{N,p,s}\,\widetilde{{\rm cap}}_{s,p}\left(\overline{B_1}; B_{2}\right)\,\gamma\right)^{\frac{1}{p}}\right)^p &\ge \left(1 - \frac{1}{(1-2\,\varepsilon)^{\frac{N}{p} - s}}\,\left(\frac{\gamma}{\gamma_0}\right)^{\frac{1}{p}}\right)^p \\ &\ge  \left(1 - \frac{1}{(1-2\,\varepsilon_0)^{\frac{N}{p} - s}}\,\left(\frac{\gamma}{\gamma_0}\right)^{\frac{1}{p}}\right)^p, 
	\end{split}
	\]
 and the last quantity is {\it positive} in light of our choice \eqref{epsilon}. 
	By inserting \eqref{scelta_gamma0} and \eqref{epsilon} in \eqref{ineq-final_upperbound}, we then infer the claimed inequality \eqref{claim_upper_bound_1 p>1} with
	\begin{equation}\label{costante upper bound}
		\begin{split}
			\mathcal{C}(N, p, s, q, \gamma) = &\left(1 - \frac{1}{(1-2\,\varepsilon_0)^{\frac{N}{p} - s}}\,\left(\frac{\gamma}{\gamma_0}\right)^{\frac{1}{p}}\right)^{-p}\,\mathscr{A}\left(N,p,s, q, \gamma, \varepsilon_0, 0\right),
		\end{split}
	\end{equation}
	with $\varepsilon_0$ and $\mathscr{A}$ respectively given by \eqref{epsilon} and \eqref{costante-ausiliaria}. 
	\vskip.2cm \noindent 
	For $s\,p=N$, we need to use \eqref{ineq:cap-potential-sp=N}, in place of \eqref{ineq:cap-potential}, to bound from below the leftmost term in \eqref{1-media}. We get \[
	\left(1 - \fint_{B_{(1-2\varepsilon)r}} \varphi_\delta\,dx\right)^{\frac{N}{s}} \ge \left(1- \frac{|B_{2\,r}|}{|B_{(1-2\,\varepsilon)\,r}|} \left(\frac{1}{K_{N,s}}\,\widetilde{{\rm cap}}_{s,\frac{N}{s}}\left(\overline{B_1}; B_{2}\right)\,(\delta + \gamma)\right)^{\frac{s}{N}} \right)^{\frac{N}{s}}.
	\]
	By spending the last inequality in \eqref{quasi-quasi} and by passing to the limit as $\delta \searrow 0$, we get
	\begin{equation}\label{ineq-final_upperbound2}
		\begin{split}
			\frac{\mathscr{A}(N, N/s ,s, q, \gamma, \varepsilon, 0)}{r^{\frac{N^2}{s\,q}}} &\ge \lambda_{{\frac{N}{s}}, q}^s(\Omega)\,\left(1- \frac{1}{1-2\,\varepsilon}\,\left(\frac{\gamma}{\gamma_0}\right)^{\frac{s}{N}}\right)^\frac{N}{s} 
			\\ &\ge  \lambda_{{\frac{N}{s}}, q}^s(\Omega)\,\left(1- \frac{1}{1-2\,\varepsilon_0}\,\left(\frac{\gamma}{\gamma_0}\right)^{\frac{s}{N}}\right)^\frac{N}{s},
		\end{split}
	\end{equation}
	for every $0 < \gamma < \gamma_0$ and $0 < \varepsilon \le \varepsilon_0$, with $\mathscr{A}$ always given by \eqref{costante-ausiliaria} and where this time 
	\begin{equation}\label{gamma_sp=N}
		\gamma_0 = \gamma_0(N,s) :=  \min\left\{2^{-N} \left(\frac{1}{K_{N,s}}\,\widetilde{{\rm cap}}_{s,\frac{N}{s}}(\overline{B_1}; B_{2})\right)^{-1} , 1 \right\},
	\end{equation}
	and
	\begin{equation} \label{scelta-eps-conforme}
		\varepsilon_0 := \frac{1}{4}\,\left(1 - \left(\frac{\gamma}{\gamma_0}\right)^{\frac{1}{N}}\right). 
	\end{equation}
	This entails the announced inequality \eqref{claim_upper_bound_1 p>1}, where we can take 
	\begin{equation} \label{costante-upper-bound-conforme}
		\begin{split}
			\mathcal{C}(N,s, q, \gamma) = &\left(1- \frac{1}{1-2\,\varepsilon_0}\,\left(\frac{\gamma}{\gamma_0}\right)^{\frac{s}{N}}\right)^{-\frac{N}{s}}\,\mathscr{A}(N, N/s, s, q, \gamma, \varepsilon_0, 0).
		\end{split}
	\end{equation}
	\vskip.2cm \noindent
	{\it Case $\boxed{p = 1}$.} This time, we use the {\it sharp estimate} in Lemma \ref{poincarè-Palle} to minorize the integral in the leftmost term of \eqref{quasi-quasi}. By \eqref{gamma_cap p>1} and \eqref{cap-approximation p>1}, this entails
	\begin{equation}
		\begin{split}\label{ineq:lower_bound_media}
			\fint_{B_{(1-2\,\varepsilon)\,r}}(1-\varphi_\delta)\,dx \ge 1- \dfrac{P_s(B_r)}{P_s(B_{(1-2\,\varepsilon)\,r})}\,(\delta + \gamma)
			= 1- \frac{\delta + \gamma}{(1-2\,\varepsilon)^{N-s}},
		\end{split}
	\end{equation}
	for every $0 < \varepsilon < 1/2.$ By \eqref{quasi-quasi} and by sending $\delta \to 0,$ we get
	\begin{equation*}\label{ineq.stima_finale_p=1}
		\lambda^s_{1, q}(\Omega)\,\left(1-\frac{\gamma}{(1-2\,\varepsilon)^{N-s}}\right) \le \frac{\mathscr{A}(N, 1,s, q, \gamma, \varepsilon, 0) }{r^{s- N + \frac{N}{q}}},
	\end{equation*}
	for every $0 < \varepsilon < 1/2$, where $\mathscr{A}$ is given by \eqref{costante-ausiliaria}.
	We now take $ \varepsilon_0 = \varepsilon_0(\gamma) \in (0, 1/2)$ so that 
	\begin{equation} \label{scelta-eps-p=1}
		1-\frac{\gamma}{(1-2\varepsilon_0)^{N-s}} = \dfrac{1-\gamma}{2} \iff \varepsilon_0 = \frac{1}{2} \left(1 - \left(\frac{2\gamma}{1+\gamma}\right)^{\frac{1}{N-s}}\right).
	\end{equation}
	Then, from the last inequality, we can infer \eqref{claim_upper_bound_1 p>1} for a constant $\mathcal{C}$ given by  
	\begin{equation} \label{eqn:costante-upper-bound}
		\mathcal{C}(N, s, q, \gamma)= \frac{2}{1-\gamma}\,\mathscr{A}\left(N,s, q, \gamma,\varepsilon_0,0\right),
	\end{equation} 
	as desired. Eventually, for the claimed asymptotic behaviours of $\mathcal{C}\left(N,p,s, q, \gamma\right)$ we refer to Remark \ref{rmk:asym-C-upper-bound} below. 
\end{proof}
\begin{remark}\label{rmk:asymptotic-s-upper bound}
	{\it Quality of $\gamma_0$.} Let $1 < p < \infty$, by Remark \ref{rmk:asym-cap} and by \cite[Theorem 1]{MazShap} we infer 
	\[
	\gamma_0(N,p,s)\sim 1,\quad \mbox{ as } s \searrow 0,
	\]
	where $\gamma_0$ is given\footnote{Observe that we have $s\,p < N$ eventually, as $s \searrow 0$.} by \eqref{scelta_gamma0}. Furthermore, if $1 < p \le N$ we also have 
	\[
	\gamma_0(N, p, s) \sim 1,\quad \mbox{ as } s \nearrow 1.
	\] 
\end{remark}
\begin{remark} \label{rmk:asym-C-upper-bound}
	{\it Quality of $\mathcal{C}$}. We discuss the qualitative limiting behaviours of the constant $\mathcal{C} = \mathcal{C}(N,p,s, q,\gamma)$ of Theorem \ref{thm:upper-bound}. 
	\begin{itemize}
		\item For $1 < p< \infty$ and $0 < s < 1$ such that $s\,p \le N$, by \eqref{epsilon} and \eqref{scelta-eps-conforme}, we have
		\[
		0 < \lim_{\gamma \to \gamma_0} \frac{\varepsilon_0}{\gamma_0 -\gamma} < \infty,
		\]
		and so \[
		0 < \lim_{\gamma \to \gamma_0} \left(\gamma_0-\gamma\right)^{p + s\,(p-1)}\,\mathcal{C}(N,p, s, q, \gamma) < \infty,
		\]
		by recalling \eqref{costante upper bound} and \eqref{costante-upper-bound-conforme}. Moreover, for $1 < p < \infty$, by Remark \ref{rmk:asym-cap} and Remark \ref{rmk:asymptotic-s-upper bound} we get 
		\[
		\mathcal{C}(N, p, s, q, \gamma) \sim \frac{1}{s}, \quad \mbox{ as } s \searrow 0,
		\]
		and, for $1 < p \le N$, we also have that 
		\[
		\mathcal{C}(N,p, s, q, \gamma) \sim \frac{1}{1- s}, \quad \mbox{ as } s \nearrow 1.
		\]
		\vskip.2cm \noindent
		\item For $p=1$, we have $\gamma_0 = 1$. By recalling \eqref{scelta-eps-p=1}, we get  \[
		0 < \lim_{\gamma \to 1} \frac{\varepsilon_0}{1-\gamma} < \infty, \quad \mbox{ and } \quad 	0 < \lim_{\gamma \to 1} (1-\gamma)\,\mathcal{C}(N,s,q,\gamma) <\infty. 
		\]
		Moreover, by Remark \ref{rmk:asym-cap},  from \eqref{eqn:costante-upper-bound} we get \[
		0 < \lim_{s \to 0} s\,\mathcal{C}(N,s,q,\gamma) <\infty, \qquad \mbox{ and } \qquad 0 < \lim_{s \to 1} (1-s)\,\mathcal{C}(N,s,q,\gamma) < \infty.
		\]
	\end{itemize}
\end{remark}

\appendix 

\section{Qualitative asymptotics}
\label{sec:app}
For $p=2$, the following proposition is already contained in \cite[Lemma A.1]{BCV}. Its proof relies on the celebrated Bourgain-Brezis-Mironescu's Theorem, see \cite{BBM} and also \cite[Corollary 4]{Bre_constant}.  Up to some minor differences, it is immediate to extend the proof of \cite[Lemma A.1]{BCV} to every frequency $\lambda^s_{p,q}$. 
\begin{proposition}  \label{prop:asym-s-1}
	Let $1\leq p<\infty$ and $1\leq q<p^*$. For every $\Omega \subseteq \mathbb{R}^N$ open bounded set with Lipschitz boundary, we have 
	\begin{equation*}\label{eq:equality for lipschitz}
		\lim _{s \to 1}(1-s)\,\lambda_{p, q}^s(\Omega) = K_{N,p}\,\lambda_{p,q}(\Omega),
	\end{equation*}
	where $K_{N,p} > 0$ is defined in \cite[pag. 6]{BBM}. 
\end{proposition}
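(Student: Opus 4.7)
The plan is to establish the asymptotic equality through the matching inequalities
\[
\limsup_{s\to 1^-}(1-s)\,\lambda_{p,q}^s(\Omega) \le K_{N,p}\,\lambda_{p,q}(\Omega) \le \liminf_{s\to 1^-}(1-s)\,\lambda_{p,q}^s(\Omega),
\]
crucially exploiting the Bourgain--Brezis--Mironescu theorem \cite{BBM, Bre_constant} together with its compactness complement.

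For the upper bound, I would fix any $u \in C^\infty_0(\Omega)$ with $\|u\|_{L^q(\Omega)} = 1$, invoke the trivial estimate $\lambda_{p,q}^s(\Omega) \le [u]^p_{W^{s,p}(\mathbb{R}^N)}$, multiply through by $(1-s)$ and pass to the limit via the BBM identity
\[
\lim_{s\to 1^-}(1-s)\,[u]^p_{W^{s,p}(\mathbb{R}^N)} = K_{N,p}\,\|\nabla u\|^p_{L^p(\Omega)},
\]
which is valid for smooth, compactly supported $u$. Taking the infimum over admissible $u$ then delivers the $\limsup$ inequality.

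For the lower bound, I would select a sequence $s_n \to 1^-$ realizing the $\liminf$ (which we may assume is finite, otherwise there is nothing to prove), together with near-minimizers $u_n \in C^\infty_0(\Omega)$ satisfying $\|u_n\|_{L^q(\Omega)} = 1$ and $[u_n]^p_{W^{s_n,p}(\mathbb{R}^N)} \le \lambda_{p,q}^{s_n}(\Omega) + 1/n$. Extended by zero, the $u_n$ are supported in $\overline{\Omega}$ and the quantities $(1-s_n)\,[u_n]^p_{W^{s_n,p}(\mathbb{R}^N)}$ are uniformly bounded. The compactness complement to BBM (whose ideas already appear in \cite{BBM}) then yields, along a subsequence, strong $L^p(\Omega)$ convergence $u_n \to u$ with $u \in W^{1,p}(\mathbb{R}^N)$ vanishing outside $\overline{\Omega}$; the Lipschitz regularity of $\partial\Omega$ identifies $u$ with an element of $W^{1,p}_0(\Omega) = \mathcal{D}^{1,p}_0(\Omega)$, and the lower-semicontinuity portion of BBM provides
\[
K_{N,p}\,\|\nabla u\|^p_{L^p(\Omega)} \le \liminf_{n\to\infty}(1-s_n)\,[u_n]^p_{W^{s_n,p}(\mathbb{R}^N)}.
\]

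The main obstacle will be to verify that no $L^q$-mass is lost in the limit, i.e.\ that $\|u\|_{L^q(\Omega)} = 1$, so as to conclude $K_{N,p}\,\lambda_{p,q}(\Omega) \le K_{N,p}\,\|\nabla u\|^p_{L^p(\Omega)} \le \liminf_n (1-s_n)\,\lambda_{p,q}^{s_n}(\Omega)$. For $1 \le q \le p$, this is immediate from H\"older's inequality, the finiteness of $|\Omega|$ and the strong $L^p(\Omega)$ convergence already secured. For $p < q < p^*$, the upgrade from $L^p$ to $L^q$ rests on a uniform higher-integrability estimate: the fractional Sobolev inequality, combined with the bound on $(1-s_n)\,[u_n]^p_{W^{s_n,p}(\mathbb{R}^N)}$ and the well-behaviour of $\mathcal{S}_{N,p,s}$ as $s \nearrow 1$, furnishes a uniform control on $\|u_n\|_{L^{p^*_{s_n}}(\Omega)}$, and Vitali's convergence theorem on the bounded set $\Omega$ then upgrades the convergence to $L^q(\Omega)$, ensuring $\|u\|_{L^q(\Omega)} = 1$ and closing the proof.
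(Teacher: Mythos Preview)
Your proposal is correct and follows the same route as the paper: the paper does not give a detailed proof but simply refers to \cite[Lemma~A.1]{BCV} and observes that its argument, which rests on the Bourgain--Brezis--Mironescu theorem (convergence, compactness and lower semicontinuity), extends with only minor changes to general $p$ and $q$. Your sketch fills in precisely those details, including the correct identification of the limit in $W^{1,p}_0(\Omega)$ via the Lipschitz regularity of $\partial\Omega$ and the upgrade of the $L^p$ convergence to $L^q$ through uniform higher integrability.
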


The next  result is direct consequence of \cite[Theorem 1]{MazShap} and H\"older's inequality.  
\begin{proposition}\label{prop:positività-autovalori}
	Let $1\le p <\infty$ and $0<s<1$ be such that $s\,p<N$. We take an exponent $1 \le q \le p^*_s$. For every $\Omega \subseteq \mathbb{R}^N$ open set with finite measure, we have \begin{equation*}\label{eigenvalue_subcritical_lowerbound}
		s\,(1-s)\,\lambda_{p, q}^s\left(\Omega\right) \geq c_{N,p}\,\left(N-s\,p\right)^{p-1}\,|\Omega|^{\frac{p}{p^*_s}-\frac{p}{q}}, 
	\end{equation*}
	for some $c_{N,p} > 0$.
\end{proposition}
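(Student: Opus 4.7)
The plan is to first reduce the statement to the critical case $q = p^*_s$ via H\"older's inequality, and then invoke the sharp Maz'ya--Shaposhnikova estimate on the Sobolev constant $\mathcal{S}_{N,p,s}$.

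Fix $u \in C^\infty_0(\Omega)$ and extend it by zero to the whole $\mathbb{R}^N$. Since $\Omega$ has finite measure and $1 \le q \le p^*_s$, H\"older's inequality yields
\[
\|u\|_{L^q(\Omega)} \le |\Omega|^{\frac{1}{q}-\frac{1}{p^*_s}}\,\|u\|_{L^{p^*_s}(\Omega)} = |\Omega|^{\frac{1}{q}-\frac{1}{p^*_s}}\,\|u\|_{L^{p^*_s}(\mathbb{R}^N)}.
\]
Raising to the power $p$ and applying the fractional Sobolev inequality on the whole space (by the definition of $\mathcal{S}_{N,p,s}$, equivalently by \eqref{costante-sobolev}), we obtain
\[
\|u\|^p_{L^q(\Omega)} \le |\Omega|^{\frac{p}{q}-\frac{p}{p^*_s}}\,\mathcal{S}_{N,p,s}\,\iint_{\mathbb{R}^N \times \mathbb{R}^N}\frac{|u(x)-u(y)|^p}{|x-y|^{N+s\,p}}\,dxdy.
\]
Taking the infimum over admissible $u$ as in \eqref{defi:freq} gives
\[
\lambda^s_{p,q}(\Omega) \ge |\Omega|^{\frac{p}{p^*_s}-\frac{p}{q}}\,\lambda^s_{p, p^*_s}(\mathbb{R}^N).
\]

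The remaining task is to bound $\lambda^s_{p, p^*_s}(\mathbb{R}^N)$ from below with the correct dependence on $s$. This is precisely the content of \cite[Theorem 1]{MazShap}, which asserts the existence of a constant $c_{N,p} > 0$ such that
\[
\mathcal{S}_{N,p,s} \le \frac{s\,(1-s)}{c_{N,p}\,(N-s\,p)^{p-1}},
\]
equivalently
\[
s\,(1-s)\,\lambda^s_{p, p^*_s}(\mathbb{R}^N) \ge c_{N,p}\,(N-s\,p)^{p-1}.
\]

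Multiplying the inequality obtained above by $s\,(1-s)$ and combining with the Maz'ya--Shaposhnikova bound yields
\[
s\,(1-s)\,\lambda^s_{p,q}(\Omega) \ge |\Omega|^{\frac{p}{p^*_s}-\frac{p}{q}}\,\Big(s\,(1-s)\,\lambda^s_{p, p^*_s}(\mathbb{R}^N)\Big) \ge c_{N,p}\,(N-s\,p)^{p-1}\,|\Omega|^{\frac{p}{p^*_s}-\frac{p}{q}},
\]
which is the desired inequality. No real obstacle is present: the proof is essentially a two-line interpolation argument, with all the substance packed into the cited Maz'ya--Shaposhnikova theorem.
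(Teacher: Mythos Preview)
Your proof is correct and follows essentially the same approach as the paper: H\"older's inequality to pass from $L^q$ to $L^{p^*_s}$, then the fractional Sobolev inequality, and finally the Maz'ya--Shaposhnikova estimate \cite[Theorem 1]{MazShap} for the dependence on $s$. The paper in fact states that the proposition is a ``direct consequence of \cite[Theorem 1]{MazShap} and H\"older's inequality'' and gives (in a commented-out block) exactly the two-line argument you wrote.
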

For our purposes the next proposition, which is a plain consequence of Proposition \ref{prop:positività-autovalori} and \cite[Theorem 3]{MazShap}, will be sufficient. The details are left to the reader.
\begin{proposition} \label{prop:asym2}
	Let $1 \le p < \infty$ and $0 < s <1$ be such that $s\,p < N$. For every $\Omega\subseteq\mathbb{R}^N$ open set with finite measure, we have\footnote{By writing $``(s,q) \to (0, p)^{+}"$, we mean that $s \to 0^{+}$ and $q \to p^{+}$.} 
	\[
	0 < \liminf_{(s,q) \to (0, p)^{+}} s\,\lambda_{p, q}^s(\Omega) \le \limsup_{(s,q) \to (0, p)^{+}} s\,\lambda_{p, q}^s(\Omega) < \infty.
	\]
\end{proposition}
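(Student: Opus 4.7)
The two-sided bound follows by estimating $s\,\lambda^s_{p,q}(\Omega)$ from below via Proposition \ref{prop:positività-autovalori} and from above by testing against a fixed smooth compactly supported function and invoking the Maz'ya--Shaposhnikova theorem \cite[Theorem 3]{MazShap}.

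For the $\liminf$ estimate, I would rewrite Proposition \ref{prop:positività-autovalori} as
\[
s\,\lambda^s_{p,q}(\Omega)\,\ge\,\frac{c_{N,p}}{1-s}\,(N-s\,p)^{p-1}\,|\Omega|^{\frac{p}{p^*_s}-\frac{p}{q}},
\]
and observe that as $(s,q)\to(0,p)^{+}$ one has $1-s\to1$, $(N-s\,p)^{p-1}\to N^{p-1}$, and the exponent $p/p^*_s-p/q=(N-s\,p)/N-p/q$ tends to $0$, so the right-hand side converges to the strictly positive constant $c_{N,p}\,N^{p-1}$, which gives the lower bound in the statement.

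For the $\limsup$ estimate, I would fix once and for all an auxiliary function $u\in C^\infty_0(\Omega)\setminus\{0\}$; by the definition of $\lambda^s_{p,q}(\Omega)$ we have
\[
s\,\lambda^s_{p,q}(\Omega)\,\le\,\frac{s\,[u]^p_{W^{s,p}(\mathbb{R}^N)}}{\|u\|^p_{L^q(\Omega)}}.
\]
Since $u\in C^\infty_0(\mathbb{R}^N)\subseteq L^p(\mathbb{R}^N)\cap W^{s_0,p}(\mathbb{R}^N)$ for any $s_0\in(0,1)$, the Maz'ya--Shaposhnikova theorem \cite[Theorem 3]{MazShap} applies and yields
\[
\lim_{s\to 0^+} s\,[u]^p_{W^{s,p}(\mathbb{R}^N)} \,=\, K'_{N,p}\,\|u\|^p_{L^p(\mathbb{R}^N)},
\]
for an explicit constant $K'_{N,p}>0$. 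On the other hand, since ${\rm supp}(u)$ is a compact subset of $\Omega$, the map $q\mapsto\|u\|_{L^q(\Omega)}$ is continuous at $q=p$, so $\|u\|_{L^q(\Omega)}\to\|u\|_{L^p(\Omega)}=\|u\|_{L^p(\mathbb{R}^N)}>0$ as $q\to p^+$. Combining the two limits gives
\[
\limsup_{(s,q)\to(0,p)^+} s\,\lambda^s_{p,q}(\Omega) \,\le\, K'_{N,p}\,<\,\infty.
\]

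The proof is entirely routine: no obstacle is genuinely hard, but one should be a little careful in writing the double limit rigorously. In particular, the only subtlety is to check that the Maz'ya--Shaposhnikova limit can be combined with the continuity in $q$; since the two limits are independent (one depends only on $s$, the other only on $q$, through a fixed test function $u$), this is immediate, and no joint uniformity is required.
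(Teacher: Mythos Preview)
Your proof is correct and follows exactly the route the paper indicates: the lower bound via Proposition~\ref{prop:positività-autovalori} and the upper bound by testing a fixed $u\in C^\infty_0(\Omega)$ and applying \cite[Theorem~3]{MazShap}, with the observation that the numerator depends only on $s$ and the denominator only on $q$ so no joint uniformity is needed. The paper itself merely states that the result is a ``plain consequence'' of these two ingredients and leaves the details to the reader, so your write-up is precisely what was intended.
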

As a byproduct, we can infer the forthcoming result 
\begin{example}\label{ex:finiteness-inr}
	Let $1 \le p < \infty$. There exists an open set $\Omega \subseteq \mathbb{R}^N$ and $0 < \gamma_0 = \gamma_0\left(N,p\right) \le 1$ such that 
	\begin{equation} \label{eqn:ex-tesi}
		\limsup_{s \to 0} R^s_{p, \gamma}(\Omega) \le r_{N, p, \gamma}, \quad \mbox{ for } 0 < \gamma < \gamma_0,
	\end{equation}
	for some $r_{N, p, \gamma} > 0$. Indeed, let $\Omega = \mathbb{R}^{N-1} \times (-1,1)$ and pick a ball $B_r(x_0)$ of radius $r > 1$ such that
	\begin{equation} \label{eqn:ex}
		\widetilde{{\rm cap}}_{s,p}\left(\overline{B_r(x_0)} \setminus \Omega; B_{2\,r}(x_0)\right) \le \gamma\,\widetilde{{\rm cap}}_{s,p}\left(\overline{B_r(x_0)}; B_{2\,r}(x_0)\right). 
	\end{equation} 
	Since $\Omega$ is invariant by translations, we can assume without loss of generality that $x_0 = t\,{\bf e}_N.$ By \eqref{cap-vol} and \eqref{eqn:ex}, we get 
	\begin{equation}
		\begin{split}
		\left|\overline{B_{r}(t\,{\bf e}_N)}\setminus \Omega\right|\,\lambda^s_p(B_{2\,r}(t\,{\bf e}_N)) &\le \widetilde{{\rm cap}}_{s,p}\left(\overline{B_r(t\,{\bf e}_N)} \setminus \Omega; B_{2\,r}(t\,{\bf e}_N)\right) \\
		&\le \gamma\,r^{N-s\,p}\,\widetilde{{\rm cap}}_{s,p} \left(\overline{B_1}; B_2\right)
		\\
		&\le \gamma\,r^{N-s\,p}\,\frac{c_{N, p}}{s\,(1-s)}\,\lambda_{p}(B_2)^{s-1}\,{\rm cap}_p\left(\overline{B_1}; B_2\right),
	\end{split}
	\end{equation}
	where in the last line we also used \eqref{cap-cap}. 
	Furthermore, by arguing as in \cite[Example A.2]{BozBra2}, we can bound from below the leftmost term in the first line of the previous inequality by $r^{N}\,\varphi_N(r),$ where
	\[
 \varphi_N(r) := 2\,\omega_{N-1}\,\int_{\arcsin \frac{1}{r}}^{\frac{\pi}{2}} \cos^N t\,dt.
	\] 
	This entails that 
	\begin{equation} \label{eqn:ex-2}
		\varphi_N(r) \le \gamma\,c_{N,p}\frac{\lambda_{p}(B_2)^{s-1}\,{\rm cap}_p\left(\overline{B_1}; B_2\right)}{s\,(1-s)\,\lambda_{p}^s(B_2)}. 
	\end{equation}
	The function $\varphi_N(r)$, for $r \in (1, \infty)$, is continuous and monotone increasing and for its range of attainable values we have  ${\rm im}\,\varphi_N = (0, \omega_N).$ Thus, $\varphi_N$ admits a continuous monotone increasing inverse, say $\varphi_N^{-1}$, defined on the interval $(0, \omega_N)$. Then, by \eqref{eqn:ex-2} and
 by the arbitrariness of $r$ 
	\[
	R^s_{p,\gamma}(\Omega) \le \varphi_N^{-1}\left(\min\left\{\gamma\,c_{N,p}\,\frac{\lambda_{p}(B_2)^{s-1}\,{\rm cap}_p\left(\overline{B_1}; B_2\right)}{s\,(1-s)\,\lambda^s_p(B_2)},\,\, \omega_N\right\}\right).
	\]
	 By passing to the $\limsup$ as $s \searrow 0$ and by taking 
	 \begin{equation*}
	 	\gamma_0(N,p) := \min\left\{\frac{\omega_N\,\lambda_p(B_2)}{c_{N,p}\,{\rm cap}_p\left(\overline{B_1}; B_2\right)}\,\liminf_{s \to 0} s\,\lambda^s_p(B_2),\,\, 1\right\},
	 \end{equation*}
	 we eventually get \eqref{eqn:ex-tesi}, in light of the continuity of $\varphi_N^{-1}$ and Proposition \ref{prop:asym2}. 
\end{example}


\begin{thebibliography}{100}
	
	\bibitem{AbFelNor} L. Abatangelo, V. Felli and B. Noris, On simple eigenvalues of the fractional Laplacian under removal of small fractional capacity sets, Commun. Contemp. Math. {\bf 22} (2020), no.~8, 1950071, 32 pp.;
	
	
	\bibitem{AmbDePMar} L. Ambrosio, G. De~Philippis and L. Martinazzi, Gamma-convergence of nonlocal perimeter functionals, Manuscripta Math. {\bf 134} (2011), no.~3-4, 377--403; 
	
	\bibitem{BanLatMH} R. Ba\~nuelos, R. Lata\l a{} and P.~J. M\'endez-Hern\'andez, A Brascamp-Lieb-Luttinger-type inequality and applications to symmetric stable processes, Proc. Amer. Math. Soc. {\bf 129} (2001), no.~10, 2997--3008;
	
	\bibitem{tesi-francesca} F. Bianchi, {\it Some geometric estimates for fractional Poincar\'e inequalities}, Ph.D. Thesis, Universit\`a di Parma (2024), available at {\tt https://hdl.handle.net/1889/5647};
	
	\bibitem{BiaBra22} F. Bianchi and L. Brasco, The fractional Makai-Hayman inequality, Ann. Mat. Pura Appl. (4) {\bf 201} (2022), no.~5, 2471--2504;
	
	\bibitem{BiaBra24} F. Bianchi, L. Brasco, An optimal lower bound in fractional spectral geometry for planar sets with topological constraints, J. London Math. Soc., {\bf 109} (2024), e12814.
	
	\bibitem{BBZ} F. Bianchi, L. Brasco and A.~C. Zagati, On the sharp Hardy inequality in Sobolev-Slobodecki\u{\i} spaces, Math. Ann. {\bf 390} (2024), no.~1, 493--555.
	
	\bibitem{BisRadServadei_book} G. Molica~Bisci, V.~D. R\u adulescu and R. Servadei, {\it Variational methods for nonlocal fractional problems}, Encyclopedia of Mathematics and its Applications, 162, Cambridge Univ. Press, Cambridge, 2016; 
	
	
	\bibitem{BBM} J. Bourgain, H. Brezis and P. Mironescu, Another look at Sobolev spaces, in {\it Optimal control and partial differential equations}, 439--455, IOS, Amsterdam;
	
	
	\bibitem{BozBra} F. Bozzola and L. Brasco, The role of topology and capacity in some bounds for principal frequencies, J. Geom. Anal. {\bf 34} (2024), no.~10, Paper No. 299, 46 pp.; 
	
	\bibitem{BozBra2} F. Bozzola, L. Brasco, Capacitary inradius and Poincar\'e-Sobolev inequalities, ESAIM: COCV, {\bf 31} (2025), Paper no. 26.
	
	\bibitem{BB_variation} F. Bozzola, L. Brasco, Variations on the capacitary inradius, {\it accepted paper:} Discrete Contin. Dyn. Syst. Ser. S (2025);
	
	\bibitem{Brasco_book} L. Brasco, {\it Handbook of Calculus of Variations for Absolute Beginners}, Springer Cham, (2025).
	
	\bibitem{BraPini} L. Brasco, On principal frequencies and inradius in convex sets, in {\it Bruno Pini Mathematical Analysis Seminar 2018}, 78--101, Bruno Pini Math. Anal. Semin., 9, Univ. Bologna, Alma Mater Stud., Bologna; 
	
	\bibitem{BraTol} L. Brasco, On principal frequencies and isoperimetric ratios in convex sets, Ann. Fac. Sci. Toulouse Math. (6) {\bf 29} (2020), no.~4, 977--1005;
	
	\bibitem{BraBriPri} L. Brasco, L. Briani, F. Prinari, Extremals for sharp Poincar\'e-Sobolev constants in Steiner symmetric sets, preprint (2025),  available at {\tt 	arXiv:2505.11084};
	
	\bibitem{BC} L. Brasco, E. Cinti, On fractional Hardy inequalities in convex sets, Discrete Contin. Dyn. Syst. {\bf 38} (2018), no.~8, 4019--4040;
	
	\bibitem{BCV} L. Brasco, E. Cinti and S. Vita, A quantitative stability estimate for the fractional Faber-Krahn inequality, J. Funct. Anal. {\bf 279} (2020), no.~3;
	
	\bibitem{BraDePFran}L. Brasco, G. De~Philippis and G. Franzina, Positive solutions to the sublinear Lane-Emden equation are isolated, Comm. Partial Differential Equations {\bf 46} (2021), no.~10, 1940--1972; 
	
	\bibitem{BraFran} L. Brasco and G. Franzina, An overview on constrained critical points of Dirichlet integrals, Rend. Semin. Mat. Univ. Politec. Torino {\bf 78} (2020), no.~2, 7--50; 
	
	\bibitem{BGV} L. Brasco, D. G\'omez-Castro and J.~L. V\'azquez, Characterisation of homogeneous fractional Sobolev spaces, Calc. Var. Partial Differential Equations {\bf 60} (2021), no.~2, Paper No. 60, 40 pp.; 
	
	\bibitem{BraLin} L. Brasco and E. Lindgren, Uniqueness of extremals for some sharp Poincar\'e-Sobolev constants, Trans. Amer. Math. Soc. {\bf 376} (2023), no.~5, 3541--3584;
	
	\bibitem{BLP} L. Brasco, E. Lindgren and E. Parini, The fractional Cheeger problem, Interfaces Free Bound. {\bf 16} (2014), no.~3, 419--458.
	
	\bibitem{BraMaz} L. Brasco and D. Mazzoleni, On principal frequencies, volume and inradius in convex sets, NoDEA Nonlinear Differential Equations Appl. {\bf 27} (2020), no.~2, Paper No. 12, 26 pp.; 
	
	\bibitem{BraMosSqu} L. Brasco, S. Mosconi, M. Squassina, Optimal decay of extremals for the fractional Sobolev inequality, Calc. Var. Partial Differential Equations, {\bf 55} (2016), Art. 23, 32 pp.
	
	\bibitem{BP} L. Brasco and E. Parini, The second eigenvalue of the fractional $p$-Laplacian, Adv. Calc. Var. {\bf 9} (2016), no.~4, 323--355.
	
	\bibitem{BPS} L. Brasco, E. Parini and M. Squassina, Stability of variational eigenvalues for the fractional $p$-Laplacian, Discrete Contin. Dyn. Syst. {\bf 36} (2016), no.~4, 1813--1845.
	
	
	
	\bibitem{BraPriZag1} L. Brasco, F. Prinari and A.~C. Zagati, A comparison principle for the Lane-Emden equation and applications to geometric estimates, Nonlinear Anal. {\bf 220} (2022), Paper No. 112847, 41 pp.; 
	
	\bibitem{BraPriZag2} L. Brasco, F. Prinari and A.~C. Zagati, Sobolev embeddings and distance functions, Adv. Calc. Var. {\bf 17} (2024), no.~4, 1365--1398;
	
	
	\bibitem{BraRuf} L. Brasco and B. Ruffini, Compact Sobolev embeddings and torsion functions, Ann. Inst. H. Poincar\'e{} C Anal. Non Lin\'eaire {\bf 34} (2017), no.~4, 817--843; 
	
	\bibitem{BS} L. Brasco, A. Salort, A note on homogeneous Sobolev spaces of fractional order, Ann. Mat. Pura Appl. (4),  {\bf 198} (2019), 1295--1330.
	
	\bibitem{Bre} H. Brezis, {\it Functional Analysis, Sobolev Spaces and Partial Differential Equations}, Springer (2010).
	
	\bibitem{Bre_constant} H. Brezis, How to recognize constant functions. A connection with Sobolev spaces, Russian Math. Surveys {\bf 57} (2002), no.~4, 693--708; translated from Uspekhi Mat. Nauk {\bf 57} (2002), no.~4(346), 59--74;
	
	\bibitem{CotTav} A. Cotsiolis and N.~K. Tavoularis, Best constants for Sobolev inequalities for higher order fractional derivatives, J. Math. Anal. Appl. {\bf 295} (2004), no.~1, 225--236;
	
	\bibitem{DiB} E. DiBenedetto, {\it Real analysis}, second edition, Birkh\"auser Advanced Texts: Basler Lehrb\"ucher, Birkh\"auser/Springer, New York, 2016; 
	
	\bibitem{Demengel_book} F. Demengel and G. Demengel, {\it Functional spaces for the theory of elliptic partial differential equations}, Universitext, Springer, London, 2012;
	
	\bibitem{EE} D. E. Edmunds, W. D. Evans, {\it Fractional Sobolev spaces and inequalities}, Cambridge Tracts in Mathematics, {\bf 230}. Cambridge University Press, Cambridge, 2023. 
	
	\bibitem{FFMMM} A. Figalli, N. Fusco, F. Maggi, M. Morini, V. Millot,  Isoperimetry and stability properties of balls with respect to nonlocal energies, Comm. Math. Phys. {\bf 336} (2015), no.~1, 441--507.
	
	\bibitem{FSV} A. Fiscella, R. Servadei and E. Valdinoci, Density properties for fractional Sobolev spaces, Ann. Acad. Sci. Fenn. Math. {\bf 40} (2015), no.~1, 235--253.
	
	\bibitem{FS} R.~L. Frank, R. Seiringer, Non-linear ground state representations and sharp Hardy inequalities, J. Funct. Anal. {\bf 255} (2008), no.~12, 3407--3430.
	
	
	
	\bibitem{franzina-torsion} G. Franzina, Non-local torsion functions and embeddings, Appl. Anal. {\bf 98} (2019), no.~10, 1811--1826;
	
	\bibitem{FraPal} G. Franzina, G. Palatucci, Fractional $p$-eigenvalues, Riv. Math. Univ. Parma (N.S.) {\bf 5} (2014), no.~2, 373--386;
	
	
	\bibitem{Gi} E. Giusti, {\it Direct methods in the calculus of variations}, World Sci. Publ., River Edge, NJ, 2003; 
	
	\bibitem{Grisvard_book} P. Grisvard, {\it Elliptic problems in nonsmooth domains}, Monographs and Studies in Mathematics, 24, Pitman, Boston, MA, 1985;
	
	
	\bibitem{Leoni}  G. Leoni, {\it A first course in Sobolev spaces.} Second edition. Graduate Studies in Mathematics, {\bf 181}. American Mathematical Society, Providence, RI, 2017
	
	\bibitem{Leoni_fractional} G. Leoni, {\it A first course in fractional Sobolev spaces}, Graduate Studies in Mathematics, 229, Amer. Math. Soc., Providence, RI, (2023);
	
	
	
	\bibitem{LL} E. H. Lieb, M. Loss, {\it Analysis}. Second edition. Graduate Studies in Mathematics, {\bf 14}. American Mathematical Society, Providence, RI, 2001.
	
	\bibitem{LindLind} E. Lindgren and P. Lindqvist, Fractional eigenvalues, Calc. Var. Partial Differential Equations {\bf 49} (2014), no.~1-2, 795--826; 
	
	\bibitem{Lombardini} L. Lombardini, Approximation of sets of finite fractional perimeter by smooth sets and comparison of local and global $s$-minimal surfaces, Interfaces Free Bound. {\bf 20} (2018), no.~2, 261--296; 
	
	
	
	\bibitem{Maz} V. Maz'ya, {\it Sobolev spaces with applications to elliptic partial differential equations}. Second, revised and augmented edition. Springer, Heidelberg, 2011. 
	
	\bibitem{MazShap} V. Maz'ya and T.~O. Shaposhnikova, On the Bourgain, Brezis, and Mironescu theorem concerning limiting embeddings of fractional Sobolev spaces, J. Funct. Anal. {\bf 195} (2002), no.~2, 230--238;
	
	\bibitem{MS} V. Maz'ya, M. Shubin, Can one see the fundamental frequency of a drum?, Lett. Math. Phys., {\bf 74} (2005), 135--151.
	
	
	\bibitem{Stein_book} E.~M. Stein, {\it Singular integrals and differentiability properties of functions}, Princeton Mathematical Series, No. 30, Princeton Univ. Press, Princeton, NJ, 1970; 
	
	\bibitem{Struwe} M. Struwe, {\it Variational methods. Applications to nonlinear partial differential equations and Hamiltonian systems. Fourth edition}. A Series of Modern Surveys in Mathematics, {\bf 34}. Springer-Verlag, Berlin, 2008.
	
	
	\bibitem{Triebel1} H. Triebel, {\it Theory of function spaces}, reprint of 1983 edition, 
	Modern Birkh\"auser Classics, Birkh\"auser/Springer Basel AG, Basel, 2010;
	
	\bibitem{Visintin} A. Visintin, Generalized coarea formula and fractal sets, Japan J. Indust. Appl. Math. {\bf 8} (1991), no.~2, 175--201; 
	
	\bibitem{Warma} M. Warma, The fractional relative capacity and the fractional Laplacian with Neumann and Robin boundary conditions on open sets, Potential Anal., {\bf 42} (2015), 499--547.
	
	
\end{thebibliography}
\end{document}